\documentclass[10pt,a4paper]{amsart}

\usepackage{amsmath}
\usepackage{amsfonts}
\usepackage{amssymb}
\usepackage[colorlinks=true, pdfstartview=FitV, linkcolor=blue, citecolor=blue, urlcolor=blue,pagebackref=false]{hyperref}

\numberwithin{equation}{section}
\newtheorem{theorem}{Theorem}[section]
\newtheorem{corollary}[theorem]{Corollary}
\newtheorem{proposition}[theorem]{Proposition}
\newtheorem{lemma}[theorem]{Lemma}
\theoremstyle{definition}

\newtheorem{remark}[theorem]{Remark}
\newtheorem*{acknowledgments}{Acknowledgments}

% The following are Pedro's macros:
\newcommand{\R}{\mathbb{R}} % It denotes the set of real numbers
\newcommand{\C}{\mathbb{C}} % It denotes the set of complex numbers
\newcommand{\N}{\mathbb{N}} % It denotes the set of non-negative integers
\newcommand{\dd}{\mathrm{d}} % It is used to write the Lebesgue measure
\newcommand{\tr}{\mathrm{tr}} % It is used to denote traces

\newcommand{\norm}[3]{\left\|#1\right\|^{#2}_{#3}} % It is used to denote the norm of functional spaces
\newcommand{\duality}[2]{\langle #1 | #2 \rangle} % It is used to denote the pairing of duality inside text
\newcommand{\Duality}[2]{\left\langle #1 \Big| #2 \right\rangle} % It is used to denote the pairing of duality in displaymath
\newcommand{\inner}[2]{\left\langle #1, #2 \right\rangle} % It is used to denote the inner product of forms
\newcommand{\Binner}[2]{\Big\langle #1, #2 \Big\rangle} % It is used to denote the inner product of forms inside an integral having \sum as an argument

\DeclareMathOperator{\supp}{supp}

% Macros added by Ting:
\usepackage[usenames]{color}
\usepackage{xcolor}

\newcommand{\sh}{\mathcal{S}} % Schwartz space

\author[]{Pedro Caro \and Ting Zhou}
\title[]{On global uniqueness for an IBVP for the time-harmonic Maxwell equations}
\date{October 28, 2012}
\keywords{Inverse boundary value problems in electromagnetism; uniqueness.}
%\thanks{}
\address{Department of Mathematics and Statistics, Helsingin yliopisto / Helsingfors universitet / University of Helsinki, Finland}
\email{pedro.caro@helsinki.fi}
\address{Department of Mathematics, Massachusetts Institute of Technology, USA}
\email{tzhou@math.mit.edu}

\begin{document}

\begin{abstract} In this paper we prove uniqueness for an inverse boundary value problem (IBVP) arising in electrodynamics. We assume that the electromagnetic properties of the medium, namely the magnetic permeability, the electric permittivity and the conductivity, are described by continuously differentiable functions.
\end{abstract}

\maketitle

\tableofcontents
\setcounter{tocdepth}{1}

\section{Introduction}
Let $ \Omega $ be a bounded non-empty open subset of $ \R^3 $ with boundary denoted by $ \partial \Omega $. Consider functions $ \mu, \varepsilon, \sigma \in L^\infty (\Omega) $, representing magnetic permeability, electric permittivity and conductivity respectively, such that $ \mu(x) \geq \mu_0 $, $ \varepsilon(x) \geq \varepsilon_0 $, $ \sigma(x) \geq 0 $ almost everywhere in $ \Omega $ for positive constants $ \mu_0 $ and $ \varepsilon_0 $. At frequency $ \omega > 0 $, for each medium characterized by $(\mu,\varepsilon,\sigma)$, we have access to all available data of the boundary tangential components of electric and magnetic fields. More specifically, we have access to the \textit{Cauchy data set} $ C(\mu,\varepsilon,\sigma; \omega) $ consisting of all boundary graded forms $f^1+f^2\in TH^\delta (\partial \Omega; \Lambda^{1} \R^3) \oplus TH^d(\partial \Omega; \Lambda^{2} \R^3)$ (see Appendix \ref{apx:A} for the definitions of these spaces and results related to $l$-forms) such that there exists $ u^1 + u^2 \in H^d(\Omega; \Lambda^1 \R^3) \oplus H^\delta (\Omega; \Lambda^2 \R^3) $ satisfying
\begin{equation}
\delta u^2 + i \omega \varepsilon u^1 - d u^1 + i \omega \mu u^2 = \sigma u^1 \label{eq:t-a_maxwell}
\end{equation}
almost everywhere in $ \Omega $ and
\begin{equation}
\delta \tr\, u^2 + d \tr\, u^1 = f^1 + f^2 \label{eq:boundary_data}
\end{equation}
in the sense of $ TH^\delta (\partial \Omega; \Lambda^{1} \R^3) \oplus TH^d(\partial \Omega; \Lambda^{2} \R^3) $. Here $u^1$ is the 1-form representation of the electric field and $u^2$ is the 2-form representation of the magnetic field. It is worth to point out that the graded equations \eqref{eq:t-a_maxwell} and \eqref{eq:boundary_data} are equivalent to the following systems of time-harmonic Maxwell equations
\begin{equation*}
\left\{
\begin{aligned}
& \delta u^2 + i \omega \varepsilon u^1 = \sigma u^1 \\
& d u^1 - i \omega \mu u^2 = 0
\end{aligned}
\right.\qquad
\end{equation*}
almost every where in $ \Omega $ and
\begin{equation*}
\left\{
\begin{aligned}
& \delta \tr\, u^2 = f^1 \\
& d \tr\, u^1 = f^2
\end{aligned}
\right.
\end{equation*}
in the sense of the space $ TH^\delta (\partial \Omega; \Lambda^{1} \R^3) $ for the $ 1 $-form equation and in the sense of $ TH^d(\partial \Omega; \Lambda^{2} \R^3) $ for the $ 2 $-form equation. Throughout this paper, we follow for convenience the graded form notation rather than the $ l $-form system.

We are interested in the inverse boundary value problem (IBVP for short) of recovering $ \mu, \varepsilon, \sigma \in L^\infty (\Omega) $ from the knowledge of $ C(\mu,\varepsilon,\sigma; \omega) $. This problem is just a reformulation in differential forms of the usual IBVP for the time-harmonic Maxwell equations proposed in \cite{SIsCh}, where $ \partial \Omega $ was smooth enough, the electromagnetic fields $(\mathbf{E}, \mathbf H)$ satisfied
\begin{equation*}
\left\{
\begin{aligned}
& \nabla\times \mathbf{E}-i\omega\mu \mathbf{H}=0\\ 
& \nabla\times \mathbf{H}+i\omega(\varepsilon+i\sigma/\omega)\mathbf{E}=0
\end{aligned}
\right.%\qquad \mbox{in }\Omega
\end{equation*}
almost everywhere in $ \Omega $ and the Cauchy set $C(\mu,\varepsilon,\sigma;\omega)$ consisted of pairs $(\nu\times\mathbf{E}|_{\partial\Omega}, \nu\times\mathbf{H}|_{\partial\Omega})\in TH^{1/2}_{\mathrm{Div}}(\partial\Omega)\times TH^{1/2}_{\mathrm{Div}}(\partial\Omega)$ (see \cite{SIsCh} for precise definitions) with $\nu$ denoting the unit outer normal vector to $\partial\Omega$. The uniqueness question associated to this problem is as follows. Given  a frequency $\omega>0$ and two sets of parameters $ \{ \mu_j, \varepsilon_j, \sigma_j \} \subset L^\infty (\Omega) $ with $ j \in \{ 1, 2 \} $ such that $ \mu_j(x) \geq \mu_0 $, $ \varepsilon_j(x) \geq \varepsilon_0 $, $ \sigma_j(x) \geq 0 $ almost everywhere in $ \Omega $, does $ C(\mu_1,\varepsilon_1,\sigma_1; \omega) = C(\mu_2,\varepsilon_2,\sigma_2; \omega) $ imply $ \mu_1 = \mu_2 $, $ \varepsilon_1 = \varepsilon_2 $ and $ \sigma_1 = \sigma_2 $?

In this paper we provide the answer to this question in the case that $ \Omega $ is locally described by the graph of a Lipschitz function and $ \mu, \varepsilon, \sigma $ are continuously differentiable in $ \Omega $. It is stated as the following main theorem.
\begin{theorem}\label{th:main} \sl Let $ \Omega $ be a bounded non-empty open subset of $ \R^3 $. Assume that $ \partial \Omega $ is locally described by the graph of a Lipschitz function. Let $ \mu_j $, $ \varepsilon_j $ and $ \sigma_j $ with $ j \in \{ 1, 2 \} $ belong to $ C^1(\overline{\Omega}) $. At frequency $\omega>0$, suppose $ \partial^\alpha \mu_1 (x) = \partial^\alpha \mu_2 (x) $, $ \partial^\alpha \varepsilon_1 (x) = \partial^\alpha \varepsilon_2 (x) $ and $ \partial^\alpha \sigma_1 (x) = \partial^\alpha \sigma_2 (x) $ for $ \alpha \in \N^3 $ with $ |\alpha| \leq 1 $ and all $ x \in \partial \Omega $, then 
$$ C(\mu_1,\varepsilon_1,\sigma_1, \omega) = C(\mu_2,\varepsilon_2,\sigma_2, \omega)\;\; \Longrightarrow\;\; \mu_1 = \mu_2, \varepsilon_1 = \varepsilon_2 \mbox{ and } \sigma_1 = \sigma_2 .$$
\end{theorem}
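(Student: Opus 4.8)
The plan is to adapt the Sylvester--Uhlmann complex geometric optics (CGO) programme to the graded Maxwell system, following the differential-forms reformulation of Ola--Somersalo together with the low-regularity techniques developed for the Calder\'on problem. The argument proceeds in four stages: (i) reduce the first-order system \eqref{eq:t-a_maxwell} to a second-order Schr\"odinger-type equation whose principal part is the Hodge Laplacian; (ii) construct CGO solutions carrying a large complex frequency $\zeta\in\C^3$ with $\zeta\cdot\zeta=0$; (iii) turn the hypothesis $C(\mu_1,\varepsilon_1,\sigma_1;\omega)=C(\mu_2,\varepsilon_2,\sigma_2;\omega)$ into an interior integral identity for the differences of the coefficients; and (iv) insert the CGO solutions, let $|\zeta|\to\infty$, and recover the coefficients from the vanishing of a Fourier transform.

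For stage (i), I would augment the graded form $u^1+u^2$ by auxiliary $0$- and $3$-form components, so that \eqref{eq:t-a_maxwell} is recast in terms of the Hodge--Dirac operator $d+\delta$ on the full exterior algebra plus a zeroth-order multiplication operator built from $\mu$, $\varepsilon$ and $\sigma$. Rescaling $u^1$ and $u^2$ by suitable powers of $\mu$ and $\varepsilon+i\sigma/\omega$ removes the coefficients from the principal symbol, at the cost of first-order terms carrying one derivative of these functions. Applying $d+\delta$ a second time factorizes the Hodge Laplacian and produces a matrix equation of the schematic form $(\Delta+A\cdot\nabla+Q)X=0$, with $A$ built from $\nabla\log\mu$ and $\nabla\log(\varepsilon+i\sigma/\omega)$ and $Q$ bounded.

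For stage (ii), I would look for $X=e^{\zeta\cdot x}(C_0+R)$, with $C_0$ a constant amplitude in the kernel of the leading symbol and $R$ a remainder satisfying $\|R\|\to0$ as $|\zeta|\to\infty$ along an admissible family. For stage (iii), equality of the Cauchy data sets allows one to match boundary traces of solutions for the two media; Green's formula for $d$ and $\delta$ then yields, for all admissible solutions $u$ (for medium $1$) and $v$ (for medium $2$), an identity $\int_\Omega\inner{Pu}{v}\,\dd x=0$, where $P$ depends linearly on $\mu_1-\mu_2$, $\varepsilon_1-\varepsilon_2$ and $\sigma_1-\sigma_2$. For stage (iv), I would pick $\zeta_1+\zeta_2=-i\xi$ with $\zeta_j\cdot\zeta_j=0$, so that the two exponentials combine into $e^{-i\xi\cdot x}$; letting $|\zeta_j|\to\infty$ annihilates the remainders and leaves the Fourier transform of the differences of the coefficients (and of their gradients) vanishing for every $\xi\in\R^3$, whence $\mu_1=\mu_2$, $\varepsilon_1=\varepsilon_2$ and $\sigma_1=\sigma_2$.

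The main obstacle is stage (ii) under the standing $C^1$ hypothesis: the reduction to a second-order equation unavoidably produces the first-order term $A\cdot\nabla$ whose coefficient $A$ is merely continuous, so there is no regularity to spare for the classical WKB construction of CGO solutions. Overcoming this requires either a clean algebraic elimination of $A$ --- a gauge/conjugation trading the gradient term for a bounded potential, exploiting that $A$ is essentially a gradient of a logarithm --- or sharp averaged resolvent (Carleman) estimates, in Bourgain-type spaces adapted to $e^{-\zeta\cdot x}\Delta e^{\zeta\cdot x}$ and averaged over $\zeta$ in the spirit of Haberman--Tataru, that tolerate non-smooth first-order perturbations. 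The boundary hypothesis on the coefficients and their first derivatives is precisely what lets one extend the differences across $\partial\Omega$ with the regularity needed both for these estimates and for the Fourier argument in stage (iv) to close.
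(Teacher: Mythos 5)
Your stages (i)--(iii) track the paper's architecture closely: the paper augments $u^1+u^2$ by $0$- and $3$-form components, rescales by $\gamma^{1/2}$ and $\mu^{1/2}$ so that $P(d+\delta;\gamma,\mu,\omega)\circ P(d+\delta;\gamma,\mu,\omega)^t$ becomes a zeroth-order (but \emph{weak}, i.e.\ second-derivative-bearing) perturbation $Q$ of the graded Hodge--Helmholtz operator, derives the integral identity $\duality{(Q_2-Q_1)w_1}{v_2}=0$ from equality of the Cauchy data sets, and builds the CGOs in the Haberman--Tataru spaces $\dot X^{\pm 1/2}_\zeta$ with decay of the remainders in average over $\zeta$. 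Your identification of the low-regularity obstacle and of the two possible remedies is also essentially what the paper does (the rescaling kills the first-order term, at the price of a potential that is only an $H^{-1}$-type distribution, which is exactly why the Bourgain-space estimates are needed). One technical point you gloss over: the amplitude $A_{\zeta_1}$ must be chosen to satisfy an algebraic compatibility condition so that the auxiliary $0$- and $3$-form components of $v_1=P^t w_1$ vanish, which is what lets $w_1$ actually generate a Maxwell solution; verifying this requires a separate uniqueness argument in $\dot X^{1/2}_{\zeta_1}$.

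Stage (iv), however, has a genuine gap. The pairing $\duality{(Q_2-Q_1)w_1}{v_2}$ is \emph{not} linear in the differences $\mu_1-\mu_2$, $\varepsilon_1-\varepsilon_2$, $\sigma_1-\sigma_2$: the potential $Q(\gamma,\mu,\omega)$ contains the products $\gamma\mu$, the quadratic terms $\inner{da}{da}$, $\inner{db}{db}$, and second weak derivatives of $a=\tfrac12\log\gamma$ and $b=\tfrac12\log\mu$. Consequently, letting $|\zeta_j|\to\infty$ does not leave you with the vanishing of the Fourier transform of the coefficient differences; it leaves you with the vanishing of the Fourier transform of the nonlinear expressions
\[
\delta d (a_2 - a_1) - \inner{d (a_1 + a_2)}{d (a_2 - a_1)} + \omega^2(\gamma_2 \mu_2 - \gamma_1 \mu_1),
\qquad
\delta d (b_2 - b_1) - \inner{d (b_1 + b_2)}{d (b_2 - b_1)} + \omega^2(\gamma_2 \mu_2 - \gamma_1 \mu_1),
\]
i.e.\ with a coupled system of second-order PDEs satisfied weakly by the differences. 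You cannot read off $\mu_1=\mu_2$ from this. The missing idea is the final unique continuation step: rewrite the system as a linear Schr\"odinger-type system for $f=\gamma_2^{1/2}-\gamma_1^{1/2}$ and $g=\mu_2^{1/2}-\mu_1^{1/2}$ (compactly supported, in $H^1(\R^3)$) with weak potentials $V=-\delta d(\gamma_1^{1/2}+\gamma_2^{1/2})/(\gamma_1^{1/2}+\gamma_2^{1/2})$, $W=-\delta d(\mu_1^{1/2}+\mu_2^{1/2})/(\mu_1^{1/2}+\mu_2^{1/2})$ and bounded coupling coefficients, and then prove that the only compactly supported $H^1$ solution is zero. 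Because $V$ and $W$ again involve second derivatives of merely $C^1$ functions, this unique continuation property is not off-the-shelf; the paper proves it by running the conjugation $u=e^{\zeta\cdot x}f$, $v=e^{\zeta\cdot x}g$ and the same $\dot X^{1/2}_\zeta\to\dot X^{-1/2}_\zeta$ smallness estimate one more time to force $f=g=0$. Without this step (or an equivalent substitute) the proof does not close.
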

A precise definition of the space denoted by $ C^1(\overline{\Omega}) $ is given at the beginning of Section \ref{sec:integral_formula}. Our result assumes the coefficients to be equal up to order one on the boundary. This is required to extend them identically outside the domain. As far as we know, the only available results about uniqueness on the boundary in this context are due to Joshi and McDowall, where $ \partial \Omega $ is assumed to be locally described by a smooth function and the Cauchy data sets are given by the graph of a bounded map (see \cite{Mc} and \cite{JoMc}). 

The IBVP considered in this paper was first proposed by Somersalo, Isaacson and Cheney in \cite{SIsCh}. In \cite{L} Lassas found a relation between this IBVP and the inverse conductivity problem proposed by Calder\'on in \cite{C}. In general terms, the latter problem can be seen as low-frequency limit of the former one. Calder\'on's problem in Electrical Impedance Tomography consists in reconstructing the conductivity of a domain by measuring electric voltages and currents on the boundary. The uniqueness question arising in this problem is whether the conductivity $\sigma$ ($ \sigma \in L^\infty(\Omega) $ and $ \sigma (x) \geq \sigma_0 > 0 $ for almost every $ x \in \Omega $), in a divergence type equation $\nabla\cdot(\sigma\nabla u)=0$ in $\Omega$, can be determined uniquely by the boundary Dirichlet-to-Neumann map $ \Lambda_\sigma : H^1(\Omega)/H^1_0(\Omega) \longrightarrow (H^1(\Omega)/H^1_0(\Omega))^\ast $ defined as
\[ \duality{\Lambda_\sigma f}{g} = \int_\Omega \sigma \nabla u \cdot \nabla v \, \dd x \]
for any $ f, g \in H^1(\Omega)/H^1_0(\Omega) $, where $ u \in H^1(\Omega) $ is the weak solution of the conductivity equation $ \nabla \cdot (\sigma \nabla u) = 0 $ in $ \Omega $ with $ u|_{\partial \Omega} = f $ and $ v \in H^1(\Omega) $ with $ v|_{\partial \Omega} = g $. A significant number of works have been devoted to answering not only the question of uniqueness but also the questions of reconstruction and stability. The most successful approach to treat this problem was introduced by Sylvester and Uhlmann in \cite{SU} and it is based on the construction of complex geometrical optics (CGO) solutions. In dimension $ 2 $, the problem is rather well understood and some important results are \cite{AP}, \cite{ClFR} and \cite{N95}. In dimension greater than $ 2 $, there are still many open questions about the sharp smoothness to ensure uniqueness, stability and reconstruction. Some important results are \cite{HT}, \cite{SU}, \cite{N} and \cite{Al}. Some recent results are \cite{CaGR} and \cite{GZh}. For a more complete list of papers on this problem, we refer the readers to the survey papers \cite{U1} and \cite{U2}.

The literature for the IBVP in electrodynamics under consideration is not as extensive as for Calder\'on's problem. The first partial results are due to Somersalo \textit{et al} in \cite{SIsCh}, for the linearization of the problem at constant electromagnetic parameters, and Sun and Uhlmann in \cite{SuU}, providing a local uniqueness theorem. The first global uniqueness result is due to Ola, P\"aiv\"arinta and Somersalo in \cite{OPS}, where they assume that the electromagnetic coefficients are $ C^3 $-functions and $ \partial \Omega $ is of class $ C^{1, 1} $. They also provided a reconstruction algorithm to recover the coefficients. The arguments in \cite{OPS} are rather complicate since the method developed by Sylvester and Uhlmann in \cite{SU} does not immediately apply. The lack of ellipticity of Maxwell's equations makes the problem more complicate than Calder\'on's. In \cite{OS}, Ola and Somersalo simplified the proof in \cite{OPS} by establishing a relation between Maxwell's equations and a matrix Helmholtz equation with a potential. This relation helps to deal with the lack of ellipticity allowing them to produce exponential growing solutions for Maxwell's equations from the CGOs for the matrix Helmholtz equation. This idea has been extensively used in proving many other results and it will be used in this paper as well. There are other results related to the IBVP under consideration in the literature. Kenig, Salo and Uhlmann proved uniqueness for the corresponding IBVP in some non-euclidean geometries (see \cite{KSU}). With certain type of partial boundary data, the uniqueness was addressed in \cite{CaOSa} by Caro, Ola and Salo (see also \cite{Ca1}). The question of stability has been studied by Caro in \cite{Ca2} assuming full data and in \cite{Ca1} assuming partial data. In \cite{Z}, Zhou used the enclosure method to reconstruct electromagnetic obstacles.

In our paper, Theorem \ref{th:main} lowers significantly the regularity of the coefficients and the  smoothness of the boundary of $\Omega $ assumeded in previous results (despite that domains with Lipschitz boundaries were already considered in \cite{Ca2}) and it matches the one assumed in \cite{HT} for Calder\'on's problem.

The general line of our paper follows the argument in \cite{OS}, relating equation \eqref{eq:t-a_maxwell} with an equation given by a compactly supported zeroth order perturbation of the graded Hodge-Helmholtz operator, namely
\begin{equation}
(\delta d + d \delta - \omega^2 \mu_0 \varepsilon_0) w_j + Q_j w_j = 0, \label{eq:introHEL}
\end{equation}
where $ Q_j = Q(\varepsilon_j + i\sigma_j / \omega, \mu_j, \omega) $ with $ j \in \{ 1, 2 \} $ has to be thought of as a weak potential containing second partial derivatives of $ \mu_j $, $ \varepsilon_j $ and $ \sigma_j $. Using this relation, we are able to prove an integral formula as
\begin{equation}
\Duality{(Q_2 - Q_1) w_1}{v_2} = 0 \label{for:introINTFOR}
\end{equation}
where $ w_1 $ is a solution to \eqref{eq:introHEL} that produces a solution to \eqref{eq:t-a_maxwell} and $ v_2 $ is a solution to a first order elliptic equation (see Section \ref{sec:integral_formula} for more details). This integral formula, with CGOs $ w_1 $ and $ v_2 $ as inputs, will be the starting point of our proof. 

To lower the regularity of the electromagnetic parameters, we adopt a recent improvement of Sylvester and Uhlmann's method that Haberman and Tataru developed in \cite{HT} to prove uniqueness of the Calder\'on problem with continuously differentiable conductivities. For such regularity, solving conductivity equation can be reduced to solving a Schr\"odinger equation $ - \Delta v + m_q v = 0$, where $m_q$ denotes the multiplication operator by the compactly supported weak potential $q=\Delta\sqrt{\sigma}/\sqrt{\sigma}$. Note that this reduction was first used by Sylvester and Uhlmann for smooth conductivities (see \cite{SU}) and later by Brown in \cite{B} for less regular conductivities, all followed by the construction of CGOs in proper function spaces. In \cite{HT}, Haberman and Tataru proved the existence of CGO solutions $v(x) = e^{x\cdot\zeta} (1+\psi_\zeta(x))$ with $\zeta\in\C^n$ and $\zeta\cdot\zeta=0$ to the Schr\"odinger equation. Roughly speaking, the construction is based on solving the equation $ -(\Delta + 2\zeta\cdot\nabla)\psi_\zeta + m_q \psi_\zeta = 0 $ in a Bourgain-type space $\dot{X}^b_\zeta$ whose norm includes the potential $|p_\zeta(\xi)|^{2b}=||\xi|^2-2i\zeta\cdot\xi|^{2b}$ as a weight. In this way, the $\zeta$-dependence is transferred into the space norms and it is shown in \cite{HT} 
\[ \|(\Delta + 2\zeta\cdot\nabla)^{-1}\|_{\dot X^{-1/2}_\zeta\rightarrow\dot X^{1/2}_\zeta}=1, \qquad \|m_q\|_{\dot X^{1/2}_\zeta\rightarrow\dot X^{-1/2}_\zeta}<1, \]
which guarantee the convergence of the Neumann series for $\psi_\zeta$. Furthermore, they obtained an average decaying property for $\|\psi_\zeta\|_{\dot X^{1/2}_\zeta}$, from which they deduced the existence of a sequence $ \{ \zeta^m \} $ such that $ \{ \psi_{\zeta^m} \} $ vanishes as $ m $ grows. 

In this paper, we adopt the idea and several of the estimates in \cite{HT} to construct the CGOs $ w_1 $ and $ v_2 $ with desired properties. Nevertheless, we avoid the argument of extracting the sequence of $ \{ \zeta^m \} $, and use directly the decay in average. This has been previously done in \cite{CaGR} by Caro, Garc\'ia and Reyes to prove stability of the Calder\'on problem for $ C^{1,\epsilon} $-conductivities. When plugging the CGOs $ w_1 $ and $ v_2 $, the output of \eqref{for:introINTFOR} will be certain non-linear relations of $ \varepsilon_1 + i \sigma_1 / \omega $, $ \mu_1 $, $ \varepsilon_2 + i \sigma_2 / \omega $ and $ \mu_2 $ involving second weak partial derivatives of the coefficients. Thus, to conclude the proof of our theorem we will need a unique continuation property for a system of the form
\begin{equation*}
\begin{aligned}
- \Delta f + V f + a f + b g = 0\\
- \Delta g + W g + c g + d f = 0,
\end{aligned}
\end{equation*}
where $ a, b, c $ and $ d $ are compactly supported and belong to $ L^\infty (\R^3) $ while $ V $ and $ W $ are again weak potentials. We will again apply the argument with Bourgain-type spaces to prove the required unique continuation property, which seems not to be available in the literature.

The paper is organized as follows. In Section \ref{sec:auxiliary} we show the relation between \eqref{eq:t-a_maxwell} and \eqref{eq:introHEL}. The proof of the integral formula \eqref{for:introINTFOR} is given in Section \ref{sec:integral_formula}. The CGO solutions are constructed in Section \ref{sec:CGO}, where we will directly refer several times the estimates proven in \cite{HT} rather than listing them in the paper. In Section \ref{sec:uniqueness}, we complete our proof by plugging the CGOs into \eqref{for:introINTFOR} and using the unique continuation principle that we will derive. An appendix is provided at the end of the paper, gathering basic facts and notations in the framework of differential forms, as well including some technical computations for the electromagnetic IBVP.

\begin{acknowledgments} This work was initiated while the authors were visiting Gunther Uhlmann at UCI. The authors would like to thank him for his generosity, hospitality and many useful discussions. The visit was partially supported by the Department of Mathematics of UCI and by the organizing committee of ``A conference on inverse problems in honor of Gunther Uhlmann'' (Irvine, June 2012). PC wants to thank Petri Ola and Mikko Salo for useful discussions. He is supported by ERC-2010 Advanced Grant, 267700 - InvProb and belongs to MTM 2011-02568. TZ is partly supported by NSF grant DMS1161129.
\end{acknowledgments}

\section{An auxiliary graded equation} \label{sec:auxiliary}
In this section we establish a relation between
\begin{equation*}
\delta u^2 + i \omega \varepsilon u^1 - d u^1 + i \omega \mu u^2 = \sigma u^1
\end{equation*}
and an auxiliary graded Hodge-Helmholtz equation with zeroth order perturbation (following the idea in \cite{OS}), which allows the construction of CGOs. For our purposes, it would be enough to have solutions in $ \Omega $, but for convenience we will conduct our analysis in the whole $ \R^3 $. This gives us certain freedom in extending the coefficients outside $ \Omega $. Thus, set $ B = \{ x \in \R^3 : |x| < R \} $ with $ R>0 $ such that $ \overline{\Omega} \subset B $. Let $ \omega $, $ \mu_0 $ and $ \varepsilon_0 $ be three positive constants. At this point, we consider $ \mu, \varepsilon $ and $ \sigma $ in $ W^{1, \infty} (\R^3) $, the space of measurable functions modulo those vanishing almost everywhere such that themselves and their first weak partial derivatives are essentially bounded in $ \R^3 $. Furthermore, we assume that $ \mu, \varepsilon $ and $ \sigma $ are real-valued,
\[ \supp (\mu - \mu_0) \subset B,\quad \supp (\varepsilon - \varepsilon_0) \subset B,\quad \supp (\sigma) \subset B \]
and $ \mu(x) \geq \mu_0 $, $ \varepsilon(x) \geq \varepsilon_0 $, $ \sigma(x) \geq 0 $ for almost every $ x $ in $ \R^3 $. For simplicity, write $ \gamma = \varepsilon + i \sigma / \omega $. It is sufficient for us, to produce weak solutions to
\begin{equation}
\delta u^2 + i \omega \gamma u^1 - d u^1 + i \omega \mu u^2 = 0 \label{eq:t-h_maxwell_R3}
\end{equation}
in $ \R^3 $, namely, forms $ u^1 + u^2 $ with $ u^l \in L^2_\mathrm{loc} (\R^3; \Lambda^l \R^3) $ satisfying
\[ \Duality{\delta u^2 + i \omega \gamma u^1 - d u^1 + i \omega \mu u^2}{\varphi^1 + \varphi^2} = 0 \]
for all $ \varphi^1 + \varphi^2 $ with $ \varphi^l \in C^\infty_0 (\R^3;\Lambda^l \R^3) $. Here $ \duality{\centerdot}{\centerdot} $ denotes the duality bracket for distributions.

In order to derive the auxiliary equation, we augment \eqref{eq:t-h_maxwell_R3} by adding
\[ - \gamma^{-1} \delta (\gamma u^1) + \mu^{-1} d (\mu u^2) = 0, \]
which is derived directly from \eqref{eq:t-h_maxwell_R3}. 

Next, we consider an equation of the graded form $\sum_{l=0}^3u^l$ where $u^l\in L^2_{\mathrm{loc}}(\R^3;\Lambda^l\R^3)$
\begin{gather*}
- \gamma^{-1} \delta (\gamma u^1) + i \omega \mu u^0 + \mu^{-1} d (\mu u^0) + \delta u^2 + i \omega \gamma u^1 \\
- \gamma^{-1} \delta (\gamma u^3) - d u^1 + i \omega \mu u^2 + \mu^{-1} d (\mu u^2) + i \omega \gamma u^3 = 0.
\end{gather*}
Multiplying $0, 2$-forms by $\gamma^{1/2}$ and $1, 3$-forms by $\mu^{1/2}$,
we obtain
\begin{gather*}
- \gamma^{-1/2} \delta (\gamma u^1) + i \omega \gamma^{1/2} \mu u^0 + \mu^{-1/2} d (\mu u^0) + \mu^{1/2} \delta u^2 + i \omega \gamma \mu^{1/2} u^1 \\
- \gamma^{-1/2} \delta (\gamma u^3) - \gamma^{1/2} d u^1 + i \omega \gamma^{1/2} \mu u^2 + \mu^{-1/2} d (\mu u^2) + i \omega \gamma \mu^{1/2} u^3 = 0.
\end{gather*}
Throughout this paper $ (\centerdot)^{1/2} $ will denote the principal branch of the square root. %which is holomorphic in $ \C \setminus \{ z \in \R : z \leq 0 \} $. 
(Same convention will apply to $\log(\centerdot)$)
If we now set
\[v=\sum_{l = 0}^3 v^l = \mu^{1/2} u^0 + \gamma^{1/2} u^1 + \mu^{1/2} u^2 + \gamma^{1/2} u^3,\]

we end up with the equation
\begin{equation}
P(d + \delta; \gamma, \mu, \omega)v = 0,
\label{eq:rescaled-equation}
\end{equation}
where
\begin{equation*}
\begin{split}
P(d + \delta; \gamma, \mu, \omega)v = &(d + \delta) \sum_{l = 0}^3 (-1)^l v^l + d a \wedge v^1 + d a \vee (v^1 + v^3) \\
&+ d b \wedge (v^0 + v^2) - d b \vee v^2 + i \omega \gamma^{1/2} \mu^{1/2} v,
\end{split}
\end{equation*}
$ a = \frac{1}{2} \log \gamma $ and $ b = \frac{1}{2} \log \mu $. %Again, $ \log (\centerdot) $ will denote the branch of the logarithm which is holomorphic in $ \C \setminus \{ z \in \R : z \leq 0 \} $. 
The key point of this derivation is to note that $ v = \sum_0^3 v^l $ with $ v^l \in L^2_\mathrm{loc} (\R^3; \Lambda^l \R^3) $ is a weak solution of \eqref{eq:rescaled-equation} in $ \R^3 $ (that is, for every $\varphi = \sum_0^3\varphi^l$ with $\varphi^l\in C^\infty_0(\R^3;\Lambda^l\R^3)$,
\[ \Duality{P(d + \delta; \gamma, \mu, \omega)v}{\varphi} = 0 \]
with $ \duality{\centerdot}{\centerdot} $ denoting the duality bracket for distributions) and $ v^0 + v^3 = 0 $ if, and only if, $ u^1 + u^2 = \gamma^{-1/2} v^1 + \mu^{-1/2} v^2 $ with $ u^l \in L^2_\mathrm{loc} (\R^3; \Lambda^l \R^3) $ is a weak solution of \eqref{eq:t-h_maxwell_R3} in $ \R^3 $. For convenience, let us define operator
\begin{equation}\label{eq:Pt}\begin{split}
P(d + \delta; \gamma, \mu, \omega)^t w: = &(d + \delta) \sum_{l = 0}^3 (-1)^{l + 1} w^l + d b \wedge w^1 + d b \vee (w^1 + w^3)\\
&+d a \wedge (w^0 + w^2) - d a \vee w^2 + i \omega \gamma^{1/2} \mu^{1/2} w
\end{split}\end{equation}
for $ w = \sum_0^3 w^l $ with $ w^l \in H^\delta_\mathrm{loc} (\R^3; \Lambda^l \R^3) \cap H^d_\mathrm{loc} (\R^3; \Lambda^l \R^3) $. Note that $ P(d + \delta; \gamma, \mu, \omega)^t $ is the formally transpose of $ P(d + \delta; \gamma, \mu, \omega) $. 

Due to the rescaling by $\gamma^{1/2}$ and $\mu^{1/2}$ that we chose,
it can be verified that $ P(d + \delta; \gamma, \mu, \omega) \circ P(d + \delta; \gamma, \mu, \omega)^t $ is a zeroth order perturbation of the graded Hodge-Helmholtz operator. Set, for any graded forms $ w = \sum_0^3 w^l $ and $ \varphi = \sum_0^3 \varphi^l $ with $ w^l, \varphi^l \in H^1_\mathrm{loc} (\R^3; \Lambda^l \R^3) $,
\begin{equation}\label{eq:multi-Q}
\begin{gathered}
\Duality{Q (\gamma, \mu, \omega) w}{\varphi} = - \int_{\R^3} \omega^2 (\gamma \mu - \varepsilon_0 \mu_0) \inner{w}{\varphi} \, \dd x  \\
+ \int_{\R^3} \inner{i2\omega d( \gamma^{1/2} \mu^{1/2}) \vee (w^1 + w^3) + i2\omega d( \gamma^{1/2} \mu^{1/2}) \wedge (w^0 + w^2)}{\varphi} \, \dd x \\
+ \int_{\R^3} \inner{d a}{d a} \inner{w^0 + w^2}{\varphi^0 + \varphi^2} + \inner{d b}{d b} \inner{w^1 + w^3}{\varphi^1 + \varphi^3} \, \dd x \\
+ \int_{\R^3} \inner{d a}{d \inner{- w^0 + w^2}{\varphi^0 + \varphi^2}} + \inner{d b}{d \inner{w^1 - w^3}{\varphi^1 + \varphi^3}} \, \dd x \\
+ \int_{\R^3} \inner{d b}{D^\ast (w^1 \odot \varphi^1)} \, \dd x + \int_{\R^3} \inner{d a}{D^\ast (\ast w^2 \odot \ast \varphi^2) } \, \dd x .
\end{gathered}
\end{equation}
\begin{proposition} \label{prop:HL+Q} \sl
Let $ w = \sum_0^3 w^l $ be a graded form with $ w^l \in H^1_\mathrm{loc} (\R^3; \Lambda^l \R^3) $ and assume that
\begin{equation}\label{eq:Helm}
\int_{\R^3} \inner{\delta w}{\delta \varphi} + \inner{d w}{d \varphi} - \omega^2 \varepsilon_0 \mu_0 \inner{w}{\varphi} \, \dd x + \Duality{Q (\gamma, \mu, \omega) w}{\varphi} = 0
\end{equation}
for all $ \varphi = \sum_0^3 \varphi^l $ with $ \varphi^l \in C^\infty_0 (\R^3;\Lambda^l \R^3) $. Then, $ v = \sum_0^3 v^l $ defined by 
\begin{equation}
v = P(d + \delta; \gamma, \mu, \omega)^t w \label{def:v=Pw}
\end{equation}
is a weak solution to \eqref{eq:rescaled-equation} in $ \R^3 $ and $ v^l \in H^1_\mathrm{loc} (\R^3; \Lambda^l \R^3) $. 
\end{proposition}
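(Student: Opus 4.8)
The strategy is to reduce the entire statement to a single bilinear identity that makes precise the claim, already announced above, that $P\circ P^t$ is a zeroth order perturbation of the graded Hodge--Helmholtz operator. Writing $P^t=P(d+\delta;\gamma,\mu,\omega)^t$ for brevity, the identity I would establish is
\begin{equation*}
\int_{\R^3}\inner{P^t w}{P^t\varphi}\,\dd x = \int_{\R^3}\inner{\delta w}{\delta\varphi}+\inner{dw}{d\varphi}-\omega^2\varepsilon_0\mu_0\inner{w}{\varphi}\,\dd x + \Duality{Q(\gamma,\mu,\omega)w}{\varphi}
\end{equation*}
for every $w=\sum_0^3 w^l$ with $w^l\in H^1_\mathrm{loc}(\R^3;\Lambda^l\R^3)$ and every $\varphi=\sum_0^3\varphi^l$ with $\varphi^l\in C^\infty_0(\R^3;\Lambda^l\R^3)$. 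Granting this identity, the proposition is almost immediate, so that the genuine work is concentrated in its verification.

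Assume now that $w$ satisfies \eqref{eq:Helm}, that is, the right-hand side of the identity vanishes for every test form $\varphi$. First I would note that $v=P^t w$ lies componentwise in $L^2_\mathrm{loc}(\R^3;\Lambda^l\R^3)$: indeed $P^t$ is a first order operator whose zeroth order coefficients are $da=\tfrac12\,d\log\gamma$, $db=\tfrac12\,d\log\mu$ and $i\omega\gamma^{1/2}\mu^{1/2}$, all of which belong to $L^\infty(\R^3)$ because $\gamma,\mu\in W^{1,\infty}(\R^3)$ with $\mathrm{Re}\,\gamma\geq\varepsilon_0>0$ and $\mu\geq\mu_0>0$ (so that $\log$ and $(\centerdot)^{1/2}$ are harmless). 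Hence $P^t$ maps $H^1_\mathrm{loc}$ into $L^2_\mathrm{loc}$. Since $P^t$ is the formal transpose of $P$, the distributional action of $P$ on $v$ against a test form is, by definition, $\duality{Pv}{\varphi}=\duality{v}{P^t\varphi}=\int_{\R^3}\inner{P^t w}{P^t\varphi}\,\dd x$, which equals the right-hand side of the identity and therefore vanishes. Thus $v=P^t w$ is a weak solution of \eqref{eq:rescaled-equation}.

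It remains to upgrade $v$ from $L^2_\mathrm{loc}$ to $H^1_\mathrm{loc}$. I would split $P=(d+\delta)\circ\eta+R$, where $\eta$ multiplies the degree-$l$ component by $(-1)^l$ and $R$ collects the zeroth order (multiplication) terms of $P$, whose coefficients are again in $L^\infty(\R^3)$. Because $v\in L^2_\mathrm{loc}$ we have $Rv\in L^2_\mathrm{loc}$, and the equation $Pv=0$ then reads $(d+\delta)(\eta v)=-Rv\in L^2_\mathrm{loc}$ in the weak sense; as $\eta$ merely changes signs componentwise, both $dv$ and $\delta v$ belong to $L^2_\mathrm{loc}$. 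Finally, $d+\delta$ is an elliptic first order operator on forms over $\R^3$ — its principal symbol is invertible for $\xi\neq 0$ and $(d+\delta)^2$ agrees, up to sign, with the componentwise flat Laplacian — so interior elliptic regularity turns $v,dv,\delta v\in L^2_\mathrm{loc}$ into $v^l\in H^1_\mathrm{loc}(\R^3;\Lambda^l\R^3)$, completing the proof.

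The heart of the argument, and its main difficulty, is the verification of the bilinear identity. I would expand $P^t w$ and $P^t\varphi$ according to \eqref{eq:Pt}, take their pointwise inner product, and integrate by parts. The principal contribution $\int\inner{(d+\delta)\eta w}{(d+\delta)\eta\varphi}$ collapses to $\int\inner{dw}{d\varphi}+\inner{\delta w}{\delta\varphi}$ because $\eta$ is a pointwise isometry and the mixed terms $\inner{d(\centerdot)}{\delta(\centerdot)}$ integrate to zero via $d^2=\delta^2=0$. The square of the $i\omega\gamma^{1/2}\mu^{1/2}$ term produces $-\omega^2\gamma\mu\inner{w}{\varphi}$, which I would split as $-\omega^2\varepsilon_0\mu_0\inner{w}{\varphi}$ plus the first term of $Q$ in \eqref{eq:multi-Q}. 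All remaining cross terms — between the principal parts and the $i\omega\gamma^{1/2}\mu^{1/2}$, $da$ and $db$ factors, and among the latter themselves — must then be collected and shown to reproduce exactly the remaining groups of \eqref{eq:multi-Q}, the contractions $\inner{db}{D^\ast(w^1\odot\varphi^1)}$ and $\inner{da}{D^\ast(\ast w^2\odot\ast\varphi^2)}$ arising from the $\wedge/\vee$ algebra. The delicate point throughout is to integrate by parts so that each derivative always falls on a component of $w$ or $\varphi$ and never twice on a coefficient; this is what guarantees that no second derivatives of $\gamma$ or $\mu$ ever appear, and is precisely why the statement is phrased as a bilinear form, so that $Q$ remains a genuine weak potential. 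Given its length and the sign bookkeeping demanded by the graded $\wedge,\vee,\odot,D^\ast$ calculus (whose identities are gathered in the appendix), I would carry out this computation in full there rather than inline.
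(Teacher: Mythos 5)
Your proposal follows the paper's proof essentially verbatim: the same bilinear identity (the paper's \eqref{id:giving_HL+Q}), the same duality argument showing that $\duality{Pv}{\varphi}=\int_{\R^3}\inner{P^tw}{P^t\varphi}\,\dd x$ vanishes by \eqref{eq:Helm}, and the same regularity upgrade via interior ellipticity of $d+\delta$ (the paper's Lemma \ref{lem:H^delta-H^d}), with the verification of the identity carried out in the paper exactly along the lines you sketch, using Corollary \ref{cor:innerinner} and Proposition \ref{prop:vdwdelta}. One small inaccuracy: from $(d+\delta)(\eta v)\in\bigoplus_l L^2_\mathrm{loc}(\R^3;\Lambda^l\R^3)$ you cannot immediately read off that $dv$ and $\delta v$ lie separately in $L^2_\mathrm{loc}$, since the degree-$m$ component of $(d+\delta)(\eta v)$ mixes $dv^{m-1}$ with $\delta v^{m+1}$; this intermediate claim is, however, unnecessary, because the ellipticity of $d+\delta$ already yields $v^l\in H^1_\mathrm{loc}(\R^3;\Lambda^l\R^3)$ directly from $v\in L^2_\mathrm{loc}$ and $(d+\delta)(\eta v)\in L^2_\mathrm{loc}$, which is precisely what the paper's lemma provides.
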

\begin{proof}
We first prove that $ v $ is a weak solution to \eqref{eq:rescaled-equation}. Since $ v^l \in L^2_\mathrm{loc} (\R^3; \Lambda^l \R^3) $, it is enough to show that 
%$ v $ satisfies
%\begin{equation*}
%\int_{\R^3} \inner{v}{P(d + \delta; \gamma, \mu, \omega)^t \varphi} \, \dd x = 0.
%\end{equation*}
%By \eqref{eq:Helm} and \eqref{def:v=Pw}, this is in turn an immediate consequence of the identity
\begin{equation}
\begin{gathered}
\int_{\R^3}\inner{P(d + \delta; \gamma, \mu, \omega)^t w}{P(d + \delta; \gamma, \mu, \omega)^t \varphi} \, \dd x\\
= \int_{\R^3} \inner{\delta w}{\delta \varphi} + \inner{d w}{d \varphi} - \omega^2 \varepsilon_0 \mu_0 \inner{w}{\varphi} \, \dd x + \Duality{Q (\gamma, \mu, \omega) w}{\varphi}.
\end{gathered}
\label{id:giving_HL+Q}
\end{equation}
To check this, by direct computations the first four terms in the left hand side are 
\begin{equation}\label{eq:firstterm}
\begin{gathered}
\int_{\R^3} \Binner{(d + \delta) \sum_{l = 0}^3 (-1)^{l + 1} w^l}{(d + \delta) \sum_{l = 0}^3 (-1)^{l + 1} \varphi^l} \, \dd x \\
= \int_{\R^3} \inner{\delta w}{\delta \varphi} + \inner{d w}{d \varphi} \, \dd x,
\end{gathered}
\end{equation}
\begin{equation}
\int_{\R^3} \inner{i \omega \gamma^{1/2} \mu^{1/2} w}{i \omega \gamma^{1/2} \mu^{1/2} \varphi} \, \dd x = - \int_{\R^3} \omega^2 \gamma \mu \inner{w}{\varphi} \, \dd x,
\end{equation}
\begin{equation}
\begin{gathered}
\int_{\R^3} \Binner{(d + \delta) \sum_{l = 0}^3 (-1)^{l + 1} w^l}{i \omega \gamma^{1/2} \mu^{1/2} \varphi} \, \dd x \\
+ \int_{\R^3} \Binner{i \omega \gamma^{1/2} \mu^{1/2} w}{(d + \delta) \sum_{l = 0}^3 (-1)^{l + 1} \varphi^l} \, \dd x \\
= \int_{\R^3} \inner{i\omega d( \gamma^{1/2} \mu^{1/2}) \vee (w^1 + w^3) + i\omega d( \gamma^{1/2} \mu^{1/2}) \wedge (w^0 + w^2)}{\varphi} \, \dd x \\
+ \int_{\R^3} \inner{i\omega d( \gamma^{1/2} \mu^{1/2}) \vee w^2 - i\omega d( \gamma^{1/2} \mu^{1/2}) \wedge w^1}{\varphi} \, \dd x
\end{gathered}
\end{equation}
and
\begin{equation}\begin{gathered}
\int_{\R^3} \langle d b \wedge w^1 + d b \vee (w^1 + w^3) + d a \wedge (w^0 + w^2) - d a \vee w^2, i \omega \gamma^{1/2} \mu^{1/2} \varphi \rangle \\
+ \langle i \omega \gamma^{1/2} \mu^{1/2} w, d b \wedge \varphi^1 + d b \vee (\varphi^1 + \varphi^3) + d a \wedge (\varphi^0 + \varphi^2) - d a \vee \varphi^2 \rangle \, \dd x \\
= \int_{\R^3} \inner{i\omega d( \gamma^{1/2} \mu^{1/2}) \vee (w^1 + w^3) + i\omega d( \gamma^{1/2} \mu^{1/2}) \wedge (w^0 + w^2)}{\varphi} \, \dd x \\
- \int_{\R^3} \inner{i\omega d( \gamma^{1/2} \mu^{1/2}) \vee w^2 - i\omega d( \gamma^{1/2} \mu^{1/2}) \wedge w^1}{\varphi} \, \dd x.
\end{gathered}\end{equation}
By Corollary \ref{cor:innerinner}, the fifth term gives
\begin{equation}
\begin{gathered}
\int_{\R^3} \langle d b \wedge w^1 + d b \vee (w^1 + w^3) + d a \wedge (w^0 + w^2) - d a \vee w^2, \\
 d b \wedge \varphi^1 + d b \vee (\varphi^1 + \varphi^3) + d a \wedge (\varphi^0 + \varphi^2) - d a \vee \varphi^2 \rangle \, \dd x \\
= \int_{\R^3} \inner{d a}{d a} \inner{w^0 + w^2}{\varphi^0 + \varphi^2} + \inner{d b}{d b} \inner{w^1 + w^3}{\varphi^1 + \varphi^3} \, \dd x.
\end{gathered}
\end{equation}
By Proposition \ref{prop:vdwdelta} the last term yields
\begin{equation}\label{eq:lastterm}
\begin{gathered}
\int_{\R^3} \Big \langle d b \wedge w^1 + d b \vee (w^1 + w^3) + d a \wedge (w^0 + w^2) - d a \vee w^2, \\
(d + \delta) \sum_{l = 0}^3 (-1)^{l + 1} \varphi^l \Big \rangle + \Big \langle (d + \delta) \sum_{l = 0}^3 (-1)^{l + 1} w^l, \\
d b \wedge \varphi^1 + d b \vee (\varphi^1 + \varphi^3) + d a \wedge (\varphi^0 + \varphi^2) - d a \vee \varphi^2 \rangle \, \dd x = \\
= \int_{\R^3} \inner{d a}{d \inner{- w^0 + w^2}{\varphi^0 + \varphi^2}} + \inner{d b}{d \inner{w^1 - w^3}{\varphi^1 + \varphi^3}} \, \dd x \\
+ \int_{\R^3} \inner{d b}{D^\ast (w^1 \odot \varphi^1)} \, \dd x + \int_{\R^3} \inner{d a}{D^\ast (\ast w^2 \odot \ast \varphi^2) } \, \dd x = 0.
\end{gathered}
\end{equation}
Summing up identities \eqref{eq:firstterm} through \eqref{eq:lastterm} gives identity \eqref{id:giving_HL+Q}.

It remains to prove that $ v^l \in H^1_\mathrm{loc} (\R^3; \Lambda^l \R^3) $. Since $ v^l \in L^2_\mathrm{loc} (\R^3; \Lambda^l \R^3) $, we have $ (d + \delta) \sum_0^3 (-1)^l v^l \in \bigoplus_0^3 L^2_\mathrm{loc} (\R^3; \Lambda^l \R^3) $ by \eqref{eq:rescaled-equation}. Therefore, Lemma \ref{lem:H^delta-H^d} allows to conclude the proof.
\end{proof}
\begin{remark} \label{rem:giving_HL+Q} \sl Identity \eqref{id:giving_HL+Q} holds even for $ \varphi = \sum_0^3 \varphi^l $ with $ \varphi^l \in H^1_\mathrm{loc} (\R^3; \Lambda^l \R^3) $.
\end{remark}

Similar calculation verifies that the same property holds for $ P(d + \delta; \gamma, \mu, \omega)^t \circ P(d + \delta; \gamma, \mu, \omega) $ as stated in Proposition \ref{prop:HL+tQ}. 
Define
\begin{equation}
\label{eq:multi-tQ}
\begin{gathered}
\Duality{\tilde{Q} (\gamma, \mu, \omega) w}{\varphi} = - \int_{\R^3} \omega^2 (\gamma \mu - \varepsilon_0 \mu_0) \inner{w}{\varphi} \, \dd x  \\
+ \int_{\R^3} \inner{-i2\omega d( \gamma^{1/2} \mu^{1/2}) \vee w^2 + i2\omega d( \gamma^{1/2} \mu^{1/2}) \wedge w^1}{\varphi} \, \dd x \\
+ \int_{\R^3} \inner{d b}{d b} \inner{w^0 + w^2}{\varphi^0 + \varphi^2} + \inner{d a}{d a} \inner{w^1 + w^3}{\varphi^1 + \varphi^3} \, \dd x \\
+ \int_{\R^3} \inner{d b}{d \inner{w^0 - w^2}{\varphi^0 + \varphi^2}} + \inner{d a}{d \inner{-w^1 + w^3}{\varphi^1 + \varphi^3}} \, \dd x \\
- \int_{\R^3} \inner{d a}{D^\ast (w^1 \odot \varphi^1)} \, \dd x - \int_{\R^3} \inner{d b}{D^\ast (\ast w^2 \odot \ast \varphi^2) } \, \dd x = 0,
\end{gathered}
\end{equation}
for $ w = \sum_0^3 w^l $ and $ \varphi = \sum_0^3 \varphi^l $ with $ w^l, \varphi^l \in H^1_\mathrm{loc} (\R^3; \Lambda^l \R^3) $.
\begin{proposition} \label{prop:HL+tQ} \sl
Let $ w = \sum_0^3 w^l $ be a graded form with $ w^l \in H^1_\mathrm{loc} (\R^3; \Lambda^l \R^3) $ and assume that
\begin{equation}
\int_{\R^3} \inner{\delta w}{\delta \varphi} + \inner{d w}{d \varphi} - \omega^2 \varepsilon_0 \mu_0 \inner{w}{\varphi} \, \dd x + \Duality{\tilde{Q} (\gamma, \mu, \omega) w}{\varphi} = 0 \label{eq:HlmtQ}
\end{equation}
for all $ \varphi = \sum_0^3 \varphi^l $ with $ \varphi^l \in C^\infty_0 (\R^3;\Lambda^l \R^3) $. Then, $ v = \sum_0^3 v^l $ defined by 
\begin{equation*}
v = P(d + \delta; \gamma, \mu, \omega) w
\end{equation*}
is a weak solution of
\[ P(d + \delta; \gamma, \mu, \omega)^t v = 0 \]
in $ \R^3 $ and $ v^l \in H^1_\mathrm{loc} (\R^3; \Lambda^l \R^3) $. 
\end{proposition}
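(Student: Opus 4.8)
The plan is to imitate the proof of Proposition \ref{prop:HL+Q} \emph{verbatim}, simply exchanging the roles of $P(d + \delta; \gamma, \mu, \omega)$ and its formal transpose $P(d + \delta; \gamma, \mu, \omega)^t$. Since $v = P(d + \delta; \gamma, \mu, \omega) w$ has components $v^l \in L^2_\mathrm{loc}(\R^3;\Lambda^l\R^3)$ by construction, the whole content of the first assertion is the companion of identity \eqref{id:giving_HL+Q}, namely
\begin{equation*}
\begin{gathered}
\int_{\R^3}\inner{P(d + \delta; \gamma, \mu, \omega) w}{P(d + \delta; \gamma, \mu, \omega) \varphi} \, \dd x\\
= \int_{\R^3} \inner{\delta w}{\delta \varphi} + \inner{d w}{d \varphi} - \omega^2 \varepsilon_0 \mu_0 \inner{w}{\varphi} \, \dd x + \Duality{\tilde{Q} (\gamma, \mu, \omega) w}{\varphi},
\end{gathered}
\end{equation*}
for all $ \varphi = \sum_0^3 \varphi^l $ with $ \varphi^l \in C^\infty_0 (\R^3;\Lambda^l \R^3) $. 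First I would establish this identity, and then read off both conclusions from it exactly as before.

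To prove the identity I would expand $\inner{P(d + \delta; \gamma, \mu, \omega) w}{P(d + \delta; \gamma, \mu, \omega) \varphi}$ into the same five groups of terms as in \eqref{eq:firstterm}--\eqref{eq:lastterm}: the pure Hodge part, which yields $\inner{\delta w}{\delta\varphi}+\inner{dw}{d\varphi}$ (the extra sign $(-1)^{l+1}$ in \eqref{eq:Pt} is squared away); the pairing of the two $i\omega\gamma^{1/2}\mu^{1/2}$ factors, yielding $-\omega^2\gamma\mu\inner{w}{\varphi}$; the two cross terms between the $(d+\delta)$ part and the $i\omega\gamma^{1/2}\mu^{1/2}$ factor, producing the $d(\gamma^{1/2}\mu^{1/2})$ contributions; the self-pairing of the $da,db$ wedge/vee terms, handled by Corollary \ref{cor:innerinner}; and the pairing of those wedge/vee terms against the $(d+\delta)$ part, handled by Proposition \ref{prop:vdwdelta}. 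The only differences from the computation in Proposition \ref{prop:HL+Q} are the interchange $a \leftrightarrow b$ in the wedge/vee terms of \eqref{eq:Pt} relative to the definition of $P(d + \delta; \gamma, \mu, \omega)$, together with the attendant sign flips; these are precisely what turns the output of the calculation from $Q(\gamma,\mu,\omega)$ in \eqref{eq:multi-Q} into $\tilde{Q}(\gamma,\mu,\omega)$ in \eqref{eq:multi-tQ}. I expect this sign-and-index bookkeeping in the cross terms and in the integration-by-parts term to be the only place requiring care, and since $\tilde Q$ is manifestly engineered as the transpose counterpart of $Q$, no genuinely new difficulty should arise.

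Granting the identity, the hypothesis \eqref{eq:HlmtQ} states exactly that its right-hand side vanishes for every test form $\varphi$, so $\int_{\R^3}\inner{P(d + \delta; \gamma, \mu, \omega) w}{P(d + \delta; \gamma, \mu, \omega) \varphi}\,\dd x = 0$. Because $P(d + \delta; \gamma, \mu, \omega)$ is the formal transpose of $P(d + \delta; \gamma, \mu, \omega)^t$, this reads $\Duality{P(d + \delta; \gamma, \mu, \omega)^t v}{\varphi} = \int_{\R^3}\inner{v}{P(d + \delta; \gamma, \mu, \omega)\varphi}\,\dd x = 0$ for all test $\varphi$, i.e.\ $v$ is a weak solution of $P(d + \delta; \gamma, \mu, \omega)^t v = 0$ in $\R^3$. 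Finally, for the regularity, the equation $P(d + \delta; \gamma, \mu, \omega)^t v = 0$ together with \eqref{eq:Pt} shows that $(d+\delta)\sum_0^3(-1)^{l+1}v^l$ equals minus the zeroth-order terms, all of which lie in $\bigoplus_0^3 L^2_\mathrm{loc}(\R^3;\Lambda^l\R^3)$ since $da,db\in L^\infty$ (as $\gamma,\mu\in W^{1,\infty}(\R^3)$) and $v^l\in L^2_\mathrm{loc}$. Lemma \ref{lem:H^delta-H^d} then gives $v^l \in H^1_\mathrm{loc}(\R^3;\Lambda^l\R^3)$, concluding the proof just as in Proposition \ref{prop:HL+Q}.
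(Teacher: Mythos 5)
Your proposal is correct and is exactly the argument the paper intends: the paper in fact omits the proof of Proposition \ref{prop:HL+tQ} entirely, remarking only that a ``similar calculation'' to that of Proposition \ref{prop:HL+Q} verifies it, and your verbatim adaptation --- the companion identity $\int_{\R^3}\inner{P w}{P \varphi}\,\dd x = \int_{\R^3}\inner{\delta w}{\delta\varphi}+\inner{dw}{d\varphi}-\omega^2\varepsilon_0\mu_0\inner{w}{\varphi}\,\dd x + \Duality{\tilde Q w}{\varphi}$, followed by the duality reading of $P^t v=0$ and the regularity via Lemma \ref{lem:H^delta-H^d} --- is precisely that calculation, with the $a\leftrightarrow b$ exchange and sign flips correctly identified as the source of the modified cross terms in $\tilde Q$.
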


Recall that $ v = \sum_0^3 v^l $ with $ v^l \in L^2_\mathrm{loc} (\R^3; \Lambda^l \R^3) $ is a weak solution to \eqref{eq:rescaled-equation} and satisfies $ v^0 + v^3 = 0 $ in $ \R^3 $ if and only if that $ u^1 + u^2 = \gamma^{-1/2} v^1 + \mu^{-1/2} v^2 $ with $ u^l \in L^2_\mathrm{loc} (\R^3; \Lambda^l \R^3) $ is a weak solution of \eqref{eq:t-h_maxwell_R3} in $ \R^3 $. We finish this section by singling out the equation of $ v^0 + v^3 $ from \eqref{eq:HlmtQ}, which is used later to show the CGOs we will construct in Section \ref{sec:CGO} satisfy $ v^0 + v^3 = 0 $.
\begin{proposition}\label{prop:v0v3eq0} \sl Let $ v = \sum_0^3 v^l $ with $ v^l \in H^1_\mathrm{loc} (\R^3; \Lambda^l \R^3) $ satisfy
\[ P(d + \delta; \gamma, \mu, \omega) v = 0 \]
in any bounded open subset of $ \R^3 $, then for any $ \varphi = \varphi^0 + \varphi^3 $ with $ \varphi^l$ belonging $ C^\infty_0 (\R^3; \Lambda^l \R^3) $ we have that
\begin{equation}\label{eq:v0v3-Sch}\begin{gathered}
\int_{\R^3} \inner{\delta (v^0 + v^3)}{\delta \varphi} + \inner{d (v^0 + v^3)}{d \varphi} - \omega^2 \varepsilon_0 \mu_0 \inner{v^0 + v^3}{\varphi} \, \dd x \\
+\Duality{\tilde{q}(\gamma, \mu, \omega) (v^0 + v^3)}{\varphi} = 0,
\end{gathered}\end{equation}
where
\begin{gather*}
\Duality{\tilde{q}(\gamma, \mu, \omega) (v^0 + v^3)}{\varphi} = - \int_{\R^3} \omega^2 (\gamma \mu - \varepsilon_0 \mu_0) \inner{v^0 + v^3}{\varphi} \, \dd x \\
+ \int_{\R^3} \inner{d b}{d b} \inner{v^0}{\varphi^0} + \inner{d a}{d a} \inner{v^3}{\varphi^3} + \inner{d b}{d\inner{v^0}{\varphi^0}} + \inner{d a}{d\inner{v^3}{\varphi^3}} \, \dd x.
\end{gather*}
\end{proposition}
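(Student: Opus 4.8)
The plan is to read off the equation for $v^0+v^3$ from the $\tilde{Q}$-analogue of the algebraic identity \eqref{id:giving_HL+Q}. Concretely, the computation behind Proposition \ref{prop:HL+tQ} yields, for all $v=\sum_0^3 v^l$ and $\varphi=\sum_0^3\varphi^l$ with components in $H^1_\mathrm{loc}(\R^3;\Lambda^l\R^3)$, the identity
\[ \int_{\R^3}\inner{P(d+\delta;\gamma,\mu,\omega)v}{P(d+\delta;\gamma,\mu,\omega)\varphi}\,\dd x = \int_{\R^3}\inner{\delta v}{\delta\varphi} + \inner{dv}{d\varphi} - \omega^2\varepsilon_0\mu_0\inner{v}{\varphi}\,\dd x + \Duality{\tilde{Q}(\gamma,\mu,\omega)v}{\varphi}, \]
valid, exactly as in Remark \ref{rem:giving_HL+Q}, for $H^1_\mathrm{loc}$ test forms as well. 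Since $v^l\in H^1_\mathrm{loc}$ and $da,db\in L^\infty(\R^3)$, the form $P(d+\delta;\gamma,\mu,\omega)v$ lies in $\bigoplus_0^3 L^2_\mathrm{loc}(\R^3;\Lambda^l\R^3)$, so the hypothesis $P(d+\delta;\gamma,\mu,\omega)v=0$ makes the left-hand side vanish for every compactly supported $\varphi$ (whose support sits in a bounded open set where the equation holds). Thus $v$ satisfies the weak Hodge--Helmholtz equation \eqref{eq:HlmtQ} with $w=v$.

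The second and decisive step is to test this equation only against forms $\varphi=\varphi^0+\varphi^3$, i.e. with $\varphi^1=\varphi^2=0$, and to exploit that the pointwise inner product pairs homogeneous forms of equal degree only. For the leading part, $\delta\varphi=\delta\varphi^3$ is the sole $2$-form and pairs only with $\delta v^3=\delta(v^0+v^3)$, while $d\varphi=d\varphi^0$ is the sole $1$-form and pairs only with $dv^0=d(v^0+v^3)$; together with $\inner{v}{\varphi}=\inner{v^0+v^3}{\varphi}$ this reproduces the first line of \eqref{eq:v0v3-Sch}.

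It then remains to collapse $\Duality{\tilde{Q}(\gamma,\mu,\omega)v}{\varphi}$ to $\Duality{\tilde{q}(\gamma,\mu,\omega)(v^0+v^3)}{\varphi}$, which is pure degree bookkeeping in the definition \eqref{eq:multi-tQ}. The mass term reduces to $-\int_{\R^3}\omega^2(\gamma\mu-\varepsilon_0\mu_0)\inner{v^0+v^3}{\varphi}\,\dd x$; the line containing $d(\gamma^{1/2}\mu^{1/2})\vee v^2$ (a $1$-form) and $d(\gamma^{1/2}\mu^{1/2})\wedge v^1$ (a $2$-form) drops out entirely, since neither pairs with $\varphi^0+\varphi^3$; the lines carrying $\inner{da}{da}$, $\inner{db}{db}$ and the derivative factors $d\inner{\cdot}{\cdot}$ retain only $\inner{v^0}{\varphi^0}$ and $\inner{v^3}{\varphi^3}$ after setting $\varphi^1=\varphi^2=0$ (as $\inner{v^2}{\varphi^0}=\inner{v^1}{\varphi^3}=0$); and the two $D^\ast$-terms vanish because they carry the factors $\varphi^1$ and $\varphi^2$. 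Gathering the survivors gives exactly \eqref{eq:v0v3-Sch}.

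I do not anticipate a genuine obstruction: once the $\tilde{Q}$-identity is in hand, the argument is the single observation that testing against $0$- and $3$-forms annihilates every term containing an intermediate $1$- or $2$-form factor. The only points requiring care are the degree shifts caused by the interior product $\vee$ (degree $-1$) and the exterior product $\wedge$ (degree $+1$) in the second line of \eqref{eq:multi-tQ}, and the consistent use of the orthogonality of distinct-degree components under $\inner{\cdot}{\cdot}$.
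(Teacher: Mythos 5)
Your proposal is correct and is essentially the paper's own argument: the paper's proof is the one-line observation that the claim is immediate from the identity underlying Proposition \ref{prop:HL+tQ} together with the fact that $\tilde{Q}(\gamma,\mu,\omega)$ decouples on the degree $0$ and $3$ components, which is exactly the bookkeeping you carry out explicitly.
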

\begin{proof} This is immediate from the proof of Proposition \ref{prop:HL+tQ} and the fact that $ \tilde{Q} (\gamma, \mu, \omega) $ decouples for $ v^0 + v^3 $.
\end{proof}

\section{An integral formula} \label{sec:integral_formula}
In this section we provide an integral formula that serves as the starting point to prove uniqueness of the IBVP. To do this, we exploit the computations which allow to produce solutions for \eqref{eq:t-h_maxwell_R3} from solutions of \eqref{eq:Helm} (see Proposition \ref{prop:HL+Q} in Section \ref{sec:auxiliary}). 

Let $ \Omega $ be a bounded non-empty open subset in $ \R^3 $ whose boundary $ \partial \Omega $ can be locally described by the graph of a Lipschitz function. Throughout the rest of the paper, we assume that $ \mu_j, \varepsilon_j, \sigma_j $ belong to $ C^1 (\overline{\Omega}) $ with $ j \in \{ 1, 2 \} $ such that $ \mu_j(x) \geq \mu_0 $, $ \varepsilon_j(x) \geq \varepsilon_0 $, $ \sigma_j(x) \geq 0 $ everywhere in $ \Omega $. Here we say that $ f $ is in $ C^1 (\overline{\Omega}) $ if $ f : \Omega \longrightarrow \C $ is continuously differentiable in $ \Omega $ and its partial derivatives $ \partial^\alpha f $ are uniformly continuous in $ \Omega $ for $ \alpha \in \N^3 $ and $ |\alpha| = 1 $ and
\begin{equation}
|\partial^\alpha f (x)| \leq C, \qquad \forall x\in \Omega, \quad |\alpha|\leq 1, \label{es:C1BOUNDNESS}
\end{equation}
for certain positive constant $ C $. The norm on $ C^1 (\overline{\Omega}) $, defined as the smallest constant $ C $ for which \eqref{es:C1BOUNDNESS} hold, makes $ C^1 (\overline{\Omega}) $ a Banach space. Since $ \partial \Omega $ is of Lipschitz class, $ f $ defined as above is uniformly continuous and, consequently, $ \partial^\alpha f $ possesses a unique bounded continuous extension to $ \overline{\Omega} $ for any $ |\alpha| \leq 1 $. This extension will be still denoted by $ f $.

Consider $ C_j=C(\mu_j,\varepsilon_j,\sigma_j; \omega) $ the Cauchy data set associated to $ \mu_j, \varepsilon_j, \sigma_j $ at frequency $ \omega >0$. Write $ \gamma_j = \varepsilon_j + i \sigma_j / \omega $ and assume $ \partial^\alpha \gamma_1 (x) = \partial^\alpha \gamma_2 (x) $ and $ \partial^\alpha \mu_1 (x) = \partial^\alpha \mu_2 (x) $ for all $ x \in \partial \Omega $ and $ |\alpha| \leq 1 $. We can extend\footnote{The extensions we want to perform here are of Whitney type. These kind of extensions hold for functions defined on any closed subset of $ \R^n $ whenever the functions can be approximeted by certain polynomials. In order to ensure the existence of such polynomials, we use that $  \partial \Omega $ is of Lipschitz class. The argument to prove the existence of such polynomials is similar to the one carried out in Section 2 of \cite{CaGR} for $ C^{1,\varepsilon}(\overline{\Omega}) $ functions with the only difference that, where the authors referred to Chapter VI \S2 of \cite{St}, we refer to Chapter VI \S4.7 of \cite{St}.} $ \gamma_j $ and $ \mu_j $ to continuously differentiable functions in $\R^3$, still denoted by $ \gamma_j $ and $ \mu_j $, such that $ |\partial^\alpha \gamma_j (x)| + |\partial^\alpha \mu_j (x)| \leq C $, $ \mu_j(x) \geq \mu_0 $, $ \varepsilon_j(x) \geq \varepsilon_0 $, $ \sigma_j(x) \geq 0 $ for all $ x \in \R^3 $, $ |\alpha| \leq 1 $ and certain constant $ C > 0 $, 
\[ \supp (\mu_j - \mu_0) \subset B,\qquad \supp (\gamma_j - \varepsilon_0) \subset B \]
where $ B = \{ x \in \R^3 : |x| < R \} \supset \overline{\Omega} $, and $ \gamma_1 (x) = \gamma_2 (x) $ and $ \mu_1 (x) = \mu_2 (x) $ for all $ x \in \R^3 \setminus \Omega $. For convenience, we write $ a_j = \frac{1}{2} \log \gamma_j $ and $ b_j = \frac{1}{2} \log \mu_j $.

\begin{proposition}\label{prop:int-formula} \sl
Let $ w_1 = \sum_0^3 w^l_1 $ be a graded form with $ w^l_1 \in H^1_\mathrm{loc} (\R^3; \Lambda^l \R^3) $
satisfying
\begin{equation}
\int_{\R^3} \inner{\delta w_1}{\delta \varphi} + \inner{d w_1}{d \varphi} - \omega^2 \varepsilon_0 \mu_0 \inner{w_1}{\varphi} \, \dd x + \Duality{Q(\gamma_1, \mu_1, \omega) w_1}{\varphi} = 0, \label{eq:HL+Q1}
\end{equation}
for all $ \varphi = \sum_0^3 \varphi^l $ with $ \varphi^l \in C^\infty_0 (\R^3;\Lambda^l \R^3) $. Assume that $ v_1 = \sum_0^3 v_1^l $, defined by
\begin{equation}\label{eq:v1ofw1}
v_1 = P(d + \delta ;\gamma_1, \mu_1, \omega)^t w_1,
\end{equation}
satisfies $ v_1^0 + v_1^3 = 0 $. Let $ v_2 = \sum_0^3 v_2^l $ with $ v_2^l \in H^1_\mathrm{loc} (\R^3; \Lambda^l \R^3) $ satisfy
\begin{equation}\label{eq:Ptv2=0}
P(d + \delta ;\gamma_2, \mu_2, \omega)^t v_2 = 0
\end{equation}
in any bounded open subset of $ \R^3 $. Then $ C_1 = C_2 $ implies
\[ \Duality{(Q(\gamma_2, \mu_2, \omega) - Q(\gamma_1, \mu_1, \omega)) w_1}{v_2} = 0. \]
\end{proposition}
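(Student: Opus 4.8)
The plan is to reduce the second-order formula to a first-order one built from the operators $P_j := P(d+\delta;\gamma_j,\mu_j,\omega)$, and to let the equality of the Cauchy data sets act through a Green identity for $P_1$. Write $Q_j := Q(\gamma_j,\mu_j,\omega)$. Since $\gamma_1=\gamma_2$ and $\mu_1=\mu_2$ together with their first derivatives agree on $\R^3\setminus\Omega$ and on $\partial\Omega$, the operators $P_1^t$ and $P_2^t$ differ only by multiplication operators supported in $\overline\Omega$, and likewise $Q_2-Q_1$ is supported in $\overline\Omega$; in particular $\Duality{(Q_2-Q_1)w_1}{v_2}$ is an integral over $\Omega$ and is well defined.

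First I would record the two structural facts supplied by Section \ref{sec:auxiliary}. On the one hand, hypothesis \eqref{eq:HL+Q1} together with Proposition \ref{prop:HL+Q} shows that $v_1=P_1^t w_1$, with $v_1^l\in H^1_\mathrm{loc}(\R^3;\Lambda^l\R^3)$, is a weak solution of $P_1 v_1=0$ in $\R^3$; since $v_1^0+v_1^3=0$ by assumption, the form $u_1=\gamma_1^{-1/2}v_1^1+\mu_1^{-1/2}v_1^2$ solves the Maxwell system \eqref{eq:t-h_maxwell_R3} for $(\gamma_1,\mu_1)$ in $\Omega$, so its Cauchy data $\delta\tr u_1^2+d\tr u_1^1$ belongs to $C_1$. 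On the other hand, applying the bilinear identity \eqref{id:giving_HL+Q} (valid for $H^1_\mathrm{loc}$ test forms by Remark \ref{rem:giving_HL+Q}) with $w=w_1$, $\varphi=v_2$ for the two parameter sets $j=1,2$ and subtracting, the Hodge--Helmholtz form cancels and one is left with
\[
\Duality{(Q_1-Q_2)w_1}{v_2}=\int_{\R^3}\inner{P_1^t w_1}{P_1^t v_2}-\inner{P_2^t w_1}{P_2^t v_2}\,\dd x .
\]
Because $P_2^t v_2=0$ pointwise (it holds weakly and $v_2^l\in H^1_\mathrm{loc}$), the second summand vanishes, while $P_1^t w_1=v_1$; hence $\Duality{(Q_2-Q_1)w_1}{v_2}=-\int_{\R^3}\inner{v_1}{P_1^t v_2}\,\dd x$, the integrand being supported in $\overline\Omega$ since $P_1^t v_2=(P_1^t-P_2^t)v_2$.

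It then remains to prove that $\int_\Omega\inner{v_1}{P_1^t v_2}\,\dd x=0$. For this I would integrate by parts through the Green identity associated with the formally transpose pair $(P_1,P_1^t)$,
\[
\int_\Omega \inner{v_1}{P_1^t v_2}-\inner{P_1 v_1}{v_2}\,\dd x=\mathcal B(\tr v_1,\tr v_2),
\]
where the boundary pairing $\mathcal B$ only sees the tangential traces through the principal part $d+\delta$: the zeroth-order terms $da_j\wedge\,\cdot$, $db_j\vee\,\cdot$ and $i\omega\gamma_j^{1/2}\mu_j^{1/2}\,\cdot$ produce no boundary contribution, so $\mathcal B$ is independent of $j$. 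Since $P_1 v_1=0$, the bulk term drops. Now I invoke the data equality: $C_1=C_2$ furnishes a Maxwell solution $u_2$ for $(\gamma_2,\mu_2)$ in $\Omega$ with the same Cauchy data as $u_1$, whose rescaling $V_2=\gamma_2^{1/2}u_2^1+\mu_2^{1/2}u_2^2$ solves $P_2 V_2=0$ with $V_2^0+V_2^3=0$. Because $\gamma_1^{1/2}=\gamma_2^{1/2}$, $\mu_1^{1/2}=\mu_2^{1/2}$ on $\partial\Omega$ and $d\tr u_1^1=d\tr u_2^1$, $\delta\tr u_1^2=\delta\tr u_2^2$, the traces of $v_1$ and $V_2$ enter $\mathcal B$ identically, so $\mathcal B(\tr v_1,\tr v_2)=\mathcal B(\tr V_2,\tr v_2)$. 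Applying the same Green identity to the pair $(V_2,v_2)$ for the parameters $(\gamma_2,\mu_2)$ and using $P_2 V_2=0$ and $P_2^t v_2=0$ gives $\mathcal B(\tr V_2,\tr v_2)=0$. Therefore $\int_\Omega\inner{v_1}{P_1^t v_2}\,\dd x=0$, which yields the claimed formula.

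The main obstacle is the boundary step: carrying out the Green identity on a Lipschitz domain and identifying $\mathcal B$ precisely within the trace spaces $TH^\delta(\partial\Omega;\Lambda^1\R^3)\oplus TH^d(\partial\Omega;\Lambda^2\R^3)$ in which the Cauchy data are defined, together with justifying that $V_2$ (a priori only in $H^d\oplus H^\delta$, not $H^1$) may be paired with $v_1$ and $v_2$ so that matching Cauchy data indeed forces $\mathcal B(\tr v_1,\tr v_2)=\mathcal B(\tr V_2,\tr v_2)$. The algebraic reduction, by contrast, is routine once the compact support of $Q_2-Q_1$ and of $P_1^t-P_2^t$, and the identity $P_2^t v_2=0$, are used to make every integral converge.
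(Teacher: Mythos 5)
Your overall architecture matches the paper's: you first use identity \eqref{id:giving_HL+Q} (via Remark \ref{rem:giving_HL+Q}) and the support properties to reduce the claim to $\int_\Omega\inner{v_1}{P(d+\delta;\gamma_1,\mu_1,\omega)^t v_2}\,\dd x=0$, and then try to kill this term by integration by parts against a second Maxwell solution supplied by $C_1=C_2$. That first reduction is exactly the paper's and is fine. The problem is in the boundary step, and it is not merely the technical issue you flag (Lipschitz regularity, trace spaces): your claim that the pairing $\mathcal B$ ``only sees the tangential traces'' and hence is determined by the Cauchy data is false. When one integrates by parts, the graded operator $d+\delta$ acting on $v_1=\gamma_1^{1/2}u_1^1+\mu_1^{1/2}u_1^2$ against $v_2=\sum_0^3v_2^l$ produces \emph{four} boundary pairings, one for each component of $v_2$; as computed in the paper's \eqref{eq:pf-prop3.1-IBP-1} these are
\begin{equation*}
\Duality{\delta \tr (\gamma_1 u^1_1)}{\gamma_1^{-1/2} v^0_2} + \Duality{\delta \tr\, u^2_1}{\mu^{1/2}_1 v^1_2} - \Duality{d \tr\, u^1_1}{\gamma_1^{1/2} v^2_2} + \Duality{d \tr (\mu_1 u^2_1)}{\mu_1^{- 1/2} v^3_2}.
\end{equation*}
Only the middle two, $\delta\tr u_1^2$ and $d\tr u_1^1$ (the tangential traces $\nu\times\mathbf H$ and $\nu\times\mathbf E$), are part of the Cauchy data and hence match those of $u_2$. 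The first and last terms are the \emph{normal} traces $\delta\tr(\gamma_1 u_1^1)$ and $d\tr(\mu_1 u_1^2)$ (essentially $\nu\cdot(\gamma\mathbf E)$ and $\nu\cdot(\mu\mathbf H)$), which are not given by $C_1=C_2$, so the equality $\mathcal B(\tr v_1,\tr v_2)=\mathcal B(\tr V_2,\tr v_2)$ does not follow from matching Cauchy data alone.

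Closing this gap requires an extra idea that your proposal does not contain: use the Maxwell equations themselves to rewrite $\gamma_j u_j^1=-\tfrac{1}{i\omega}\delta u_j^2$ and $\mu_j u_j^2=\tfrac{1}{i\omega}d u_j^1$, so the offending terms become $\delta\tr(\delta u_j^2)$ and $d\tr(d u_j^1)$, and then invoke the paper's Lemma \ref{lem:traces}, which asserts that $\delta\tr(u_1^2-u_2^2)=0$ implies $\delta\tr(\delta(u_1^2-u_2^2))=0$ (and the analogue for $d\tr$). In other words, the normal components of $\gamma\mathbf E$ and $\mu\mathbf H$ are determined by the tangential Cauchy data through a surface-divergence identity, and this is precisely the lemma that makes the boundary terms cancel. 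Without it (or an equivalent statement), the step ``the traces of $v_1$ and $V_2$ enter $\mathcal B$ identically'' is an unproved assertion, and it is the crux of the proposition rather than a routine verification.
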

\begin{proof}
By Remark \ref{rem:giving_HL+Q} and because $ \gamma_1 (x) = \gamma_2 (x) $ and $ \mu_1 (x) = \mu_2 (x) $ for all $ x \in \R^3 \setminus \Omega $, we know that
\begin{gather*}
\Duality{(Q(\gamma_2, \mu_2, \omega) - Q(\gamma_1, \mu_1, \omega)) w_1}{v_2} \\
= \int_\Omega\inner{P(d + \delta; \gamma_2, \mu_2, \omega)^t w_1}{P(d + \delta; \gamma_2, \mu_2, \omega)^t v_2} \, \dd x \\
- \int_\Omega\inner{P(d + \delta; \gamma_1, \mu_1, \omega)^t w_1}{P(d + \delta; \gamma_1, \mu_1, \omega)^t v_2} \, \dd x\\
= - \int_\Omega\inner{v_1}{P(d + \delta; \gamma_1, \mu_1, \omega)^t v_2} \, \dd x.
\end{gather*}
The last equality follows from \eqref{eq:Ptv2=0} and \eqref{eq:v1ofw1}.

Since $ v^0_1 + v^3_1 = 0 $, we have that $ u^1_1 + u^2_1 = \gamma_1^{-1/2} v^1_1 + \mu_1^{-1/2} v^2_1 $ satisfies
\begin{equation}
\delta u^2_1 + i \omega \gamma_1 u^1_1 - d u^1_1 + i \omega \mu_1 u^2_1 = 0 \label{eq:maxwell1}
\end{equation}
almost everywhere in $ \Omega $ (see Section \ref{sec:auxiliary}). The definitions of boundary traces $ \delta \tr $ and $ d \tr $ (see Appendix \ref{app:traces}) give
\begin{gather}
- \int_\Omega\inner{v_1}{P(d + \delta; \gamma_1, \mu_1, \omega)^t v_2} \, \dd x = \Duality{\delta \tr (\gamma_1 u^1_1)}{\gamma_1^{-1/2} v^0_2} + \Duality{\delta \tr\, u^2_1}{\mu^{1/2}_1 v^1_2}\label{eq:pf-prop3.1-IBP-1} \\
- \Duality{d \tr\, u^1_1}{\gamma_1^{1/2} v^2_2} + \Duality{d \tr (\mu_1 u^2_1)}{\mu_1^{- 1/2} v^3_2}.\nonumber
\end{gather}

Suppose $ f = f^1 + f^2 $ with $ f^l \in L^2_\mathrm{loc}(\R^3; \Lambda^l \R^3) $ is a weak solution to
\begin{equation}
\delta f^2 + i \omega \gamma_2 f^1 - d f^1 + i \omega \mu_2 f^2 = 0 \label{eq:maxwell2}
\end{equation}
in $ \R^3 $. Note that then $ f^1 \in H^d(\Omega; \Lambda^1 \R^3) $, $ f^2 \in H^\delta (\Omega; \Lambda^2 \R^3) $. Set $ g = g^1 + g^2 = \gamma_2^{1/2} f^1 + \mu_2^{1/2} f^2 $. By \eqref{eq:Ptv2=0}, we obviously have
\[ \int_\Omega\inner{g}{P(d + \delta; \gamma_2, \mu_2, \omega)^t v_2} \, \dd x = 0. \]
Once more by the definitions of $ \delta \tr $ and $ d \tr $, we have
\begin{gather}
0 = \int_\Omega\inner{g}{P(d + \delta; \gamma_2, \mu_2, \omega)^t v_2} \, \dd x = - \Duality{\delta \tr (\gamma_2 f^1)}{\gamma_2^{-1/2} v^0_2} - \Duality{\delta \tr\, f^2}{\mu^{1/2}_2 v^1_2} \label{eq:pf-prop3.1-IBP-2}\\
+ \Duality{d \tr\, f^1}{\gamma_2^{1/2} v^2_2} - \Duality{d \tr (\mu_2 f^2)}{\mu_2^{- 1/2} v^3_2}.\nonumber
\end{gather}

Since $ \delta \tr u^2_1 + d \tr u^1_1 \in C_1 = C_2 $ by assumption, there exists $ u_2 = u^1_2 + u^2_2 $ with $ u^1_2 \in H^d(\Omega; \Lambda^1 \R^3) $ and $ u^2_2 \in H^\delta (\Omega; \Lambda^2 \R^3) $ a solution to \eqref{eq:maxwell2} in $ \Omega $ such that 
\[\delta \tr u^2_1 + d \tr u^1_1 = \delta \tr u^2_2 + d \tr u^1_2.\] 
Define\footnote{This definition satisfies the appropriate conditions since $ \gamma_1 (x) = \gamma_2 (x) $ and $ \mu_1 (x) = \mu_2 (x) $ for all $ x \in \R^3 \setminus \Omega $.} $ f (x) = u_2 (x) $ for almost every $ x \in \Omega $ and $ f (x) = u_1 (x) $ for almost every $ x \in \R^3 \setminus \Omega $. Using \eqref{eq:pf-prop3.1-IBP-1} and \eqref{eq:pf-prop3.1-IBP-2} and noting that $ \gamma_1 (x) = \gamma_2 (x) $ and $ \mu_1 (x) = \mu_2 (x) $ for all $ x \in \partial \Omega $, we can conclude
\begin{gather*}
\Duality{(Q(\gamma_2, \mu_2, \omega) - Q(\gamma_1, \mu_1, \omega)) w_1}{v_2} = - \frac{1}{i\omega} \Duality{\delta \tr (\delta u^2_1)}{\gamma_2^{-1/2} v^0_2} \\
+ \frac{1}{i\omega} \Duality{d \tr (d u^1_1)}{\mu_2^{- 1/2} v^3_2} + \frac{1}{i\omega} \Duality{\delta \tr (\delta u_2^2)}{\gamma_2^{-1/2} v^0_2} - \frac{1}{i\omega} \Duality{d \tr (d u_2^1)}{\mu_2^{- 1/2} v^3_2}.
\end{gather*}
The result follows by Lemma \ref{lem:traces}.
\end{proof}

\section{The construction of CGO solutions}\label{sec:CGO}
In this section we construct the CGO solutions that will be plugged into the integral formula in Proposition \ref{prop:int-formula}. To deal with less regular electromagnetic coefficients than that of \cite{OS}, we adopt Bourgain-type spaces introduced by Haberman and Tataru in \cite{HT}. 

Let $ \zeta = \sum_1^3 \zeta_j dx^j $ be a constant $ 1 $-differential form in $ \R^3 $ and let $ p_\zeta $ denote the polynomial 
\[ p_\zeta (\xi) = |\xi|^2 - 2i \inner{\zeta}{\xi} .\] 
For any $ b \in \R $, let $\dot{X}^b_\zeta$ denote the space of graded forms $w=\sum_{0}^3w^l$ such that $w^l\in\sh'(\R^3;\Lambda^l\R^3)$ and its Fourier transform
\[\widehat{w^l}\in L^2(\R^3, |p_\zeta|^{2b} \dd \xi;\Lambda^l\R^3).\]
The functional
\[ w \in \dot{X}^b_\zeta \longmapsto \norm{w}{}{\dot{X}^b_\zeta} = \left( \sum_{l = 0}^3 \norm{|p_\zeta|^b \widehat{w^l}}{2}{L^2(\R^3; \Lambda^l \R^3)} \right)^{1/2} \]
makes $ \dot{X}^b_\zeta $ a normed space. Moreover, if $ b < 1 $, then $ \dot{X}^b_\zeta $ is a Hilbert space. As in \cite{HT}, we will only use the cases where $ b \in \{ 1/2, -1/2 \} $. Note that $ \dot{X}_\zeta^{-1/2} $ can be identified as the dual space of $ \dot{X}_\zeta^{1/2} $. The simplest feature of these spaces is that the operator $ (\Delta_\zeta+ \inner{\zeta}{\zeta})^{-1} $  (defined by the symbol $(p_\zeta)^{-1}$) is a bounded linear operator from $ \dot{X}_\zeta^{-1/2} $ to $ \dot{X}_\zeta^{1/2} $ with norm
\begin{equation}
\norm{(\Delta_\zeta+\inner{\zeta}{\zeta})^{-1}}{}{\dot{X}_\zeta^{-1/2}\rightarrow \dot{X}_\zeta^{1/2}}=1. \label{id:Dz-k2=1}
\end{equation}
Let $\Delta_\zeta$ denote the conjugate operator $ \Delta_\zeta = e_{-\zeta} (d\delta+\delta d) \circ e_\zeta $ where $ e_\zeta (x) = e^{\zeta \cdot x} $ and $ \zeta \cdot x = \sum_{1}^3 \zeta_j x^j $.
\begin{remark} \label{rem:uniqueness} \sl Given $ f \in \dot{X}^{-1/2}_{\zeta} $, it is an obvious consequence of the definition of $ \dot{X}^{1/2}_{\zeta} $ that there exists a unique $ u \in \dot{X}^{1/2}_{\zeta} $ satisfying
\begin{equation*}
\Delta_{\zeta} u + \inner{\zeta}{\zeta}u = f.
\end{equation*}
\end{remark}
\begin{remark} \label{rem:smoothness} \sl If $ u \in \dot{X}^{1/2}_\zeta $ with $ u = \sum_0^3 u^l $ then $ u^l \in H^1_\mathrm{loc} (\R^3; \Lambda^l \R^3) $. This is a simple consequence of (5) and (6) in Lemma 2.2 of \cite{HT} and the finite band property (sometimes called Bernstein's inequality).
\end{remark}

\subsection{The construction of $ w_1 $.} Let $ \zeta_1 $ be a complex-valued constant $ 1 $-form in $ \R^3 $ satisfying $ \inner{\zeta_1}{\zeta_1} = -k^2 $ where $ k = \omega^{1/2} \mu_0 \epsilon_0 $. We are looking for $ w_1 = \sum_0^3 w^l_1 $ with $ w^l_1 \in H^1_\mathrm{loc}(\R^3; \Lambda^l \R^3) $ the solution to \eqref{eq:HL+Q1} of the form
\begin{equation}
w_1 = e_{\zeta_1} (A_{\zeta_1} + R_{\zeta_1}) \label{id:CGO1}
\end{equation}
with $ A_{\zeta_1} $ a constant graded differential form in $ \R^3 $ and $ R_{\zeta_1} \in \dot{X}^{1/2}_{\zeta_1} $. Moreover, we want $ R_{\zeta_1} $ to bear certain sense of smallness. Note that this is equivalent to finding $ R_{\zeta_1} $ that solves
\begin{equation}
(\Delta_{\zeta_1} - k^2) R_{\zeta_1} + Q(\gamma_1, \mu_1, \omega) R_{\zeta_1} = - Q(\gamma_1, \mu_1, \omega) A_{\zeta_1} \label{eq:CGO-HL+Q1}
\end{equation}
in $ \dot{X}^{1/2}_{\zeta_1} $. Note that $Q(\gamma_1,\mu_1,\omega)A_{\zeta_1}\in \dot{X}^{-1/2}_{\zeta_1}$. In the scalar case, this was done in \cite{HT} for such Bourgain-type spaces. In the original case of smooth coefficients, such equation was solved in weighted $L^2$ spaces in \cite{SU} for the scalar case and in \cite{OS} for systems. 

\begin{lemma} \label{lem:CGO-HL+Q1} \sl Let $ \zeta_1 $ and $ A_{\zeta_1} $ be as above.
For $ |\zeta_1| $ large enough, there exists a solution $ R_{\zeta_1} \in \dot{X}^{1/2}_{\zeta_1} $ to \eqref{eq:CGO-HL+Q1} such that
\begin{equation}
\norm{R_{\zeta_1}}{}{\dot{X}^{1/2}_{\zeta_1}} \lesssim \norm{Q(\gamma_1, \mu_1, \omega) A_{\zeta_1}}{}{\dot{X}^{-1/2}_{\zeta_1}}, \label{es:INICIALdecay}
\end{equation}
where the implicit constant (incorporated in the symbol $ \lesssim $) is independent of $ \zeta_1 $.
\end{lemma}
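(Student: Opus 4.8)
The plan is to recast \eqref{eq:CGO-HL+Q1} as a fixed point equation in $\dot X^{1/2}_{\zeta_1}$ and solve it by a Neumann series, the convergence being guaranteed by the smallness of the potential $Q(\gamma_1,\mu_1,\omega)$ for large $|\zeta_1|$. Since $\inner{\zeta_1}{\zeta_1}=-k^2$, the operator $\Delta_{\zeta_1}-k^2$ coincides with $\Delta_{\zeta_1}+\inner{\zeta_1}{\zeta_1}$, which by \eqref{id:Dz-k2=1} admits a bounded inverse
\[ G_{\zeta_1}:=(\Delta_{\zeta_1}+\inner{\zeta_1}{\zeta_1})^{-1}\colon \dot X^{-1/2}_{\zeta_1}\longrightarrow \dot X^{1/2}_{\zeta_1}, \qquad \norm{G_{\zeta_1}}{}{\dot X^{-1/2}_{\zeta_1}\to \dot X^{1/2}_{\zeta_1}}=1. \]
Applying $G_{\zeta_1}$ to \eqref{eq:CGO-HL+Q1}, and using that $Q(\gamma_1,\mu_1,\omega)$ is bounded from $\dot X^{1/2}_{\zeta_1}$ into $\dot X^{-1/2}_{\zeta_1}$ (to be verified below), the equation \eqref{eq:CGO-HL+Q1} becomes equivalent to
\[ \bigl(I+G_{\zeta_1}Q(\gamma_1,\mu_1,\omega)\bigr)R_{\zeta_1}=-G_{\zeta_1}Q(\gamma_1,\mu_1,\omega)A_{\zeta_1} \]
in $\dot X^{1/2}_{\zeta_1}$, the right-hand side being well defined because $Q(\gamma_1,\mu_1,\omega)A_{\zeta_1}\in\dot X^{-1/2}_{\zeta_1}$.

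The crux of the argument is to show that $\norm{Q(\gamma_1,\mu_1,\omega)}{}{\dot X^{1/2}_{\zeta_1}\to \dot X^{-1/2}_{\zeta_1}}\to 0$ as $|\zeta_1|\to\infty$. I would estimate separately each of the terms in the definition \eqref{eq:multi-Q} of $Q$. The zeroth order term, the first order term carrying $d(\gamma^{1/2}\mu^{1/2})$, and the terms carrying $\inner{da}{da}$ and $\inner{db}{db}$ all act by multiplication against a bounded, compactly supported coefficient; their operator norm $\dot X^{1/2}_{\zeta_1}\to\dot X^{-1/2}_{\zeta_1}$ decays as $|\zeta_1|\to\infty$ by the multiplication estimates of \cite{HT}, obtained by splitting the coefficient into a low and a high frequency part tied to $|\zeta_1|$, the smooth low frequency part gaining a power of $|\zeta_1|^{-1/2}$ while the high frequency remainder is small in the relevant norm. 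The remaining terms, namely those in which $d$ falls on the pairing $\inner{w}{\varphi}$ and those carrying $D^\ast$, are the genuine \emph{weak potential} contributions; I would apply the Leibniz rule to distribute the derivative onto $w$ and $\varphi$, thereby rewriting each as a first order operator with $L^\infty$ coefficient $da$ or $db$, and then invoke the first order estimates of \cite{HT} together with the same low/high frequency splitting to extract decay in $|\zeta_1|$.

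Granting this, choose $|\zeta_1|$ so large that $\norm{Q(\gamma_1,\mu_1,\omega)}{}{\dot X^{1/2}_{\zeta_1}\to\dot X^{-1/2}_{\zeta_1}}\le 1/2$. Then $\norm{G_{\zeta_1}Q(\gamma_1,\mu_1,\omega)}{}{\dot X^{1/2}_{\zeta_1}\to\dot X^{1/2}_{\zeta_1}}\le 1/2$, so $I+G_{\zeta_1}Q(\gamma_1,\mu_1,\omega)$ is invertible on $\dot X^{1/2}_{\zeta_1}$ through the Neumann series, with inverse of norm at most $2$. Hence
\[ R_{\zeta_1}=-\bigl(I+G_{\zeta_1}Q(\gamma_1,\mu_1,\omega)\bigr)^{-1}G_{\zeta_1}Q(\gamma_1,\mu_1,\omega)A_{\zeta_1}\in\dot X^{1/2}_{\zeta_1} \]
solves \eqref{eq:CGO-HL+Q1}, and $\norm{R_{\zeta_1}}{}{\dot X^{1/2}_{\zeta_1}}\le 2\,\norm{Q(\gamma_1,\mu_1,\omega)A_{\zeta_1}}{}{\dot X^{-1/2}_{\zeta_1}}$, which is \eqref{es:INICIALdecay} with a constant independent of $\zeta_1$. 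The main obstacle is the operator norm estimate of the second paragraph, and within it the weak potential terms: for $C^1$ coefficients the forms $da$ and $db$ are only $L^\infty$, so no decay follows from naive multiplication, and one must exploit the oscillation encoded in the weight $|p_{\zeta_1}|^{b}$ of $\dot X^{b}_{\zeta_1}$ through the frequency splitting of \cite{HT} to produce the gain in $|\zeta_1|$.
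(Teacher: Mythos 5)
Your overall architecture --- inverting $\Delta_{\zeta_1}-k^2=\Delta_{\zeta_1}+\inner{\zeta_1}{\zeta_1}$ with operator norm $1$ via \eqref{id:Dz-k2=1}, reducing to $\norm{Q(\gamma_1,\mu_1,\omega)}{}{\dot X^{1/2}_{\zeta_1}\to\dot X^{-1/2}_{\zeta_1}}=o(\mathbf 1(|\zeta_1|))$, and closing with a Neumann series to get the bound with constant $2$ --- is exactly the paper's. The gap is in how you propose to handle the weak-potential terms of \eqref{eq:multi-Q}, i.e.\ those of the form $\int\inner{da_1}{d\inner{\cdot}{\cdot}}$ and $\int\inner{db_1}{D^\ast(\cdot\odot\cdot)}$. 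You suggest applying the Leibniz rule to push the derivative onto $w$ and $\varphi$ separately and then treating each piece as a first-order operator with $L^\infty$ coefficient. This destroys precisely the structure that makes these terms tractable. A single first-order piece $w\mapsto a\cdot\nabla w$ with $a\in L^\infty$ compactly supported has $\dot X^{1/2}_{\zeta_1}\to\dot X^{-1/2}_{\zeta_1}$ norm that is only $\mathcal O(\mathbf 1(|\zeta_1|))$, not $o(\mathbf 1(|\zeta_1|))$: the localized estimates of \cite{HT} give $\norm{\nabla w}{}{L^2(K)}\lesssim|\zeta_1|^{1/2}\norm{w}{}{\dot X^{1/2}_{\zeta_1}}$ and $\norm{\varphi}{}{L^2(K)}\lesssim|\zeta_1|^{-1/2}\norm{\varphi}{}{\dot X^{1/2}_{\zeta_1}}$, whose product carries no gain, and this is sharp for data concentrated near the characteristic set of $p_{\zeta_1}$ (where $|\xi|\sim|\zeta_1|$). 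No frequency splitting of the coefficient rescues an isolated first-order piece, since even for smooth $a$ the operator $a\cdot\nabla$ is only $\mathcal O(1)$ in these norms. With only an $\mathcal O(1)$ bound on $Q$, the Neumann series does not converge and your third paragraph cannot be executed.

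What the paper does instead (following \cite{HT}) is to keep the derivative on the products $d\inner{w}{\varphi}$ and $D^\ast(w^1\odot\varphi^1)$ and split the \emph{coefficient}: write $da_1=\alpha_h+(da_1-\alpha_h)$ with $\alpha_h=\varphi_h\ast da_1$ a mollification at scale $h$. For the mollified part one integrates by parts so that the derivative lands on $\alpha_h$, producing $\delta\alpha_h$ of size $h^{-1}$ multiplying a localized bilinear quantity that gains $|\zeta_1|^{-1}$; this requires both Leibniz pieces together, which is exactly why they must not be separated. For the remainder one uses the $\mathcal O(1)$ bilinear bound multiplied by $\norm{da_1-\alpha_h}{}{L^\infty}$, which is $o(\mathbf 1(h))$ because $da_1$ is continuous with compact support, hence uniformly continuous --- this is where the $C^1$ hypothesis enters. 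Choosing $h=|\zeta_1|^{-1/2}$ balances $|\zeta_1|^{-1}h^{-1}=|\zeta_1|^{-1/2}$ against $o(\mathbf 1(h))$ and yields the required $o(\mathbf 1(|\zeta_1|))$. Your last sentence correctly identifies that one must ``exploit the oscillation'' for these terms, but the mechanism you describe does not do so; you need the divergence-form cancellation, not term-by-term first-order estimates.
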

\begin{proof} By using a Neumann series argument (see \cite{SU}), we can show the existence of $ R_{\zeta_1} \in \dot{X}^{1/2}_{\zeta_1} $ satisfying
\[ \norm{R_{\zeta_1}}{}{\dot{X}^{1/2}_{\zeta_1}} \leq \norm{(I + (\Delta_{\zeta_1} - k^2)^{-1}Q(\gamma_1, \mu_1, \omega))^{-1}}{}{\dot{X}^{1/2}_{\zeta_1} \rightarrow \dot{X}^{1/2}_{\zeta_1}} \norm{Q(\gamma_1, \mu_1, \omega) A_{\zeta_1}}{}{\dot{X}^{-1/2}_{\zeta_1}} \]
for $ |\zeta_1| $ large enough, as a simple consequence of \eqref{id:Dz-k2=1} and 
\begin{equation}
\norm{Q(\gamma_1, \mu_1, \omega)}{}{\dot{X}^{1/2}_{\zeta_1} \rightarrow \dot{X}^{-1/2}_{\zeta_1}} = o(\textbf{1}(|\zeta_1|)). \label{es:Q1}
\end{equation}
Here $ \textbf{1}(t) = 1 $ for any $ t \in \R $. 

To prove \eqref{es:Q1}, let $ u $ and $ v $ belong to $ \dot{X}_{\zeta_1}^{1/2} $. By a slight modification of Corollary 2.1 in \cite{HT} we have that
\begin{gather*}
\left| \Duality{Q(\gamma_1, \mu_1, \omega)u}{v} \right| \lesssim |\zeta_1|^{-1} \norm{u}{}{\dot{X}_{\zeta_1}^{1/2}} \norm{v}{}{\dot{X}_{\zeta_1}^{1/2}} \\
+ \left| \int_{\R^3} \inner{\alpha_h}{d \inner{- u^0 + u^2}{v^0 + v^2}} + \inner{\beta_h}{d \inner{u^1 - u^3}{\varphi^1 + v^3}} \, \dd x \right| \\
+ \left| \int_{\R^3} \inner{\beta_h}{D^\ast (u^1 \odot v^1)} \, \dd x \right| + \left| \int_{\R^3} \inner{\alpha_h}{D^\ast (\ast u^2 \odot \ast v^2) } \, \dd x \right| \\
+ \left| \int_{\R^3} \inner{d a_1 - \alpha_h}{d \inner{- u^0 + u^2}{v^0 + v^2}} + \inner{d b_1 - \beta_h}{d \inner{u^1 - u^3}{\varphi^1 + v^3}} \, \dd x \right| \\
+ \left| \int_{\R^3} \inner{d b_1 - \beta_h}{D^\ast (u^1 \odot v^1)} \, \dd x \right| + \left| \int_{\R^3} \inner{d a_1 - \alpha_h}{D^\ast (\ast u^2 \odot \ast v^2) } \, \dd x \right|
\end{gather*}
where $ \alpha_h $ and $ \beta_h $ are $ 1 $-forms in $ \R^3 $ defined by 
\[\alpha_h = \varphi_h \ast d a_1,\qquad \beta_h = \varphi_h \ast d b_1\] 
(here $ \ast $ denotes convolution) with $ 0 < h \leq 1 $, $ \varphi_h (x) = h^{-3} \varphi (x/h) $, $ \varphi \in C^\infty_0(\R^3) $, $ 0 \leq \varphi(x) \leq 1 $ for all $ x \in \R^3 $ and $ \int_{\R^3} \varphi \, \dd x = 1 $. Note that the implicit constant depends on $ \varepsilon_0 $, $ \mu_0 $, $ \Omega $ and the $ C^1 $-norms of $ \gamma_1 $ and $ \mu_1 $. A further modification of Lemma 2.3 in \cite{HT} gives
\begin{gather*}
\left| \Duality{Q(\gamma_1, \mu_1, \omega)u}{v} \right| \lesssim |\zeta_1|^{-1} \norm{u}{}{\dot{X}_{\zeta_1}^{1/2}} \norm{v}{}{\dot{X}_{\zeta_1}^{1/2}} \\
+ |\zeta_1|^{-1} \left( \norm{\delta \alpha_h}{}{L^\infty(\R^3)} + \norm{\delta \beta_h}{}{L^\infty(\R^3)} \right) \norm{u}{}{\dot{X}_{\zeta_1}^{1/2}} \norm{v}{}{\dot{X}_{\zeta_1}^{1/2}} \\
+ \left( \norm{d a_1 - \alpha_h}{}{L^\infty(\R^3)} + \norm{d b_1 - \beta_h}{}{L^\infty(\R^3)} \right) \norm{u}{}{\dot{X}_{\zeta_1}^{1/2}} \norm{v}{}{\dot{X}_{\zeta_1}^{1/2}} \\
\lesssim \left( |\zeta_1|^{-1} h^{-1} + o(\textbf{1}(h)) \right) \norm{u}{}{\dot{X}_{\zeta_1}^{1/2}} \norm{v}{}{\dot{X}_{\zeta_1}^{1/2}}
\end{gather*}
as $ h $ vanishes. Choosing $ h = |\zeta_1|^{-1/2} $, this implies \eqref{id:Dz-k2=1} and the lemma is proven.
\end{proof}

Up to this point, nothing has been said about the smallness of $ R_{\zeta_1} $. We will see in the next lemma that estimate \eqref{es:INICIALdecay} yields such smallness in an average sense. This idea, due to Haberman and Tataru, is one of the key points in \cite{HT}.

\begin{lemma} \label{lem:ave-decay} \sl Let $s\in\R$ satisfy $s\geq1$. Given a real-valued constant $ 1 $-form $ \rho $ in $\R^3$, choose $ \eta_1 $ and $ \eta_2 $ also real-valued constant $ 1 $-forms such that $ \inner{\eta_1}{\eta_2} = 0 $, $ \inner{\eta_j}{\rho} = 0 $ and $ |\eta_j| = 1 $ for $ j \in \{ 1, 2 \} $. Set
\[ \zeta_1 = -\sqrt{s^2+\frac{|\rho|^2}{4}}~\eta_1+i\left(\frac{\rho}{2}-\sqrt{s^2+k^2}~\eta_2\right), \]
and assume $ |A_{\zeta_1}| $ is bounded as a function of $ s, \eta_1 $. Then the $ R_{\zeta_1} $ obtained in Lemma \ref{lem:CGO-HL+Q1} satisfies
\begin{equation}\label{eq:avg-decay-multi-Q}
\frac{1}{\lambda}\int_{S^1}\int_\lambda^{2\lambda}\norm{R_{\zeta_1}}{2}{\dot{X}^{1/2}_{\zeta_1}}~\dd s~\dd\eta_1 = o(\textbf{1}(\lambda))
\end{equation}
as $ \lambda $ becomes large. Here $ S^1 $ denotes the intersection between the unit sphere in $ \R^3 $ and the plane defined by $ \eta_1 $ and $ \eta_2 $.
\end{lemma}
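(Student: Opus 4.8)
The plan is to push the a priori estimate \eqref{es:INICIALdecay} through the average and thereby reduce everything to the potential acting on the constant form $A_{\zeta_1}$. Squaring \eqref{es:INICIALdecay}, whose implicit constant does not depend on $\zeta_1$, gives $\norm{R_{\zeta_1}}{2}{\dot{X}^{1/2}_{\zeta_1}}\lesssim\norm{Q(\gamma_1,\mu_1,\omega)A_{\zeta_1}}{2}{\dot{X}^{-1/2}_{\zeta_1}}$, so it is enough to prove
\[ \frac{1}{\lambda}\int_{S^1}\int_\lambda^{2\lambda}\norm{Q(\gamma_1,\mu_1,\omega)A_{\zeta_1}}{2}{\dot{X}^{-1/2}_{\zeta_1}}\,\dd s\,\dd\eta_1 = o(\textbf{1}(\lambda)). \]
First I would read off the structure of $Q(\gamma_1,\mu_1,\omega)A_{\zeta_1}$ from \eqref{eq:multi-Q}, using that $A_{\zeta_1}$ is constant in $x$ and, by hypothesis, bounded in $(s,\eta_1)$. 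The first three lines of \eqref{eq:multi-Q} pair $A_{\zeta_1}$ against the continuous, compactly supported coefficients $\omega^2(\gamma_1\mu_1-\varepsilon_0\mu_0)$, $d(\gamma_1^{1/2}\mu_1^{1/2})$, $\inner{da_1}{da_1}$ and $\inner{db_1}{db_1}$ (with $a_1=\tfrac12\log\gamma_1$, $b_1=\tfrac12\log\mu_1$), and therefore contribute a form $F\in L^2(\R^3)$ of compact support with $\norm{F}{}{L^2(\R^3)}\lesssim1$ uniformly in $(s,\eta_1)$. The last three lines instead differentiate the test form; after an integration by parts they become distributions whose Fourier transforms are bounded by $|\xi|\,|\widehat{G}(\xi)|$, where $G$ equals $da_1$ or $db_1$ times constants coming from $A_{\zeta_1}$. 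In particular $G\in L^2(\R^3)$ is compactly supported with $\norm{G}{}{L^2(\R^3)}\lesssim1$.

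For the regular piece I would use the identity $\norm{F}{2}{\dot{X}^{-1/2}_{\zeta_1}}=\int_{\R^3}|p_{\zeta_1}(\xi)|^{-1}|\widehat{F}(\xi)|^2\,\dd\xi$ and the average decay estimate of Haberman and Tataru (\cite{HT}). The mechanism is that, for fixed $\xi$ off the line $\R\rho$, the real part $\zeta_r=-\sqrt{s^2+|\rho|^2/4}\,\eta_1$ has length comparable to $s$, so $|p_{\zeta_1}(\xi)|\geq 2|\inner{\zeta_r}{\xi}|\to\infty$ for almost every $\eta_1\in S^1$ and every $s\in[\lambda,2\lambda]$; hence the averaged multiplier tends to zero pointwise in $\xi$, and a uniform $L^1(\dd\xi)$ bound lets one pass the limit inside by dominated convergence. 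This yields an $o(\textbf{1}(\lambda))$ contribution.

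The divergence pieces are the delicate ones. Since the Fourier transform of each such piece is bounded by $|\xi|\,|\widehat{G}(\xi)|$, one has $\norm{\cdot}{2}{\dot{X}^{-1/2}_{\zeta_1}}\leq\int_{\R^3}|\xi|^2|p_{\zeta_1}(\xi)|^{-1}|\widehat{G}(\xi)|^2\,\dd\xi$, so the required statement becomes the weighted average decay
\[ \frac{1}{\lambda}\int_{S^1}\int_\lambda^{2\lambda}\int_{\R^3}\frac{|\xi|^2}{|p_{\zeta_1}(\xi)|}|\widehat{G}(\xi)|^2\,\dd\xi\,\dd s\,\dd\eta_1\longrightarrow0 \]
for $G\in L^2(\R^3)$ of compact support, which is exactly the estimate proved in \cite{HT}. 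I would apply it directly, componentwise, to the finitely many forms $G$ built from $da_1$ and $db_1$. Heuristically the high frequencies $|\xi|\gg\lambda$ are absorbed by the $L^2$-tail of $\widehat{G}$, because there $|\xi|^2/|p_{\zeta_1}|\sim1$, while on bounded frequencies the pointwise vanishing of $|\xi|^2/|p_{\zeta_1}(\xi)|$ is combined with a dominating function as above.

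The hard part will be the construction of this integrable, $\lambda$-independent majorant for the averaged weight $\tfrac1\lambda\int_{S^1}\int_\lambda^{2\lambda}|\xi|^2|p_{\zeta_1}(\xi)|^{-1}\,\dd s\,\dd\eta_1$: the obstruction is the resonance set where $\mathrm{Re}\,p_{\zeta_1}(\xi)=|\xi|^2+2\inner{\zeta_i}{\xi}$ and $\mathrm{Im}\,p_{\zeta_1}(\xi)=-2\inner{\zeta_r}{\xi}$ vanish together and $|p_{\zeta_1}|$ degenerates, with $\zeta_i=\tfrac{\rho}{2}-\sqrt{s^2+k^2}\,\eta_2$. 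Controlling the $(s,\eta_1)$-measure of a neighbourhood of this set uniformly in $\xi$ is the core of the Haberman--Tataru argument, which is why we borrow it rather than reprove it. Granting that estimate from \cite{HT}, the two contributions together with the uniform bounds $\norm{F}{}{L^2(\R^3)},\norm{G}{}{L^2(\R^3)}\lesssim1$ give \eqref{eq:avg-decay-multi-Q}, finishing the proof.
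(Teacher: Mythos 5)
Your proposal follows essentially the same route as the paper: square \eqref{es:INICIALdecay} to reduce matters to the average decay of $\norm{Q(\gamma_1,\mu_1,\omega)A_{\zeta_1}}{2}{\dot{X}^{-1/2}_{\zeta_1}}$, split $Q(\gamma_1,\mu_1,\omega)A_{\zeta_1}$ into a compactly supported $L^2$ piece plus a piece of the form $\chi(d+\delta)f_{\zeta_1}$ with $f_{\zeta_1}$ bounded and compactly supported, and invoke the Haberman--Tataru average decay (their Lemma 3.1) for the latter. The only substantive difference is the zeroth-order piece: the paper disposes of it with the pointwise localization estimate (Lemma 2.2(5) of \cite{HT}), which gives the clean bound $O(s^{-1/2})$ uniformly in $\eta_1$, whereas your dominated-convergence argument is shakier as stated, since $|p_{\zeta_1}(\xi)|^{-1}$ admits no $(s,\eta_1)$-uniform integrable pointwise majorant near the characteristic set and a uniform $L^1(\dd\xi)$ bound alone does not justify passing the limit inside; you should replace that step by the duality form of the same localization estimate.
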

\begin{proof} By the definition of $ Q(\gamma_1, \mu_1, \omega) $, the identity \eqref{eq:lastterm}, the fact that $ A_{\zeta_1} $ is constant and that $ Q(\gamma_1, \mu_1, \omega) $ is compactly supported, we have
\begin{equation*}
\left| \Duality{Q(\gamma_1, \mu_1, \omega)A_{\zeta_1}}{v} \right| \lesssim \sum_{l = 0}^3 \norm{\chi v^l}{}{L^2(\R^3; \Lambda^l \R^3)} + \norm{\chi (d + \delta) f_{\zeta_1}}{}{\dot{X}^{-1/2}_{\zeta_1}} \norm{v}{}{\dot{X}^{1/2}_{\zeta_1}}
\end{equation*}
where $ v = \sum_0^3 v^l $, $ \chi \in C^\infty_0 (\R^3) $ such that $ \chi (x) = 1 $ for all $ x \in \supp d \gamma_1 \cup \supp d \mu_1 $ and
\[f_{\zeta_1} = db_1\wedge A^1_{\zeta_1} + db_1\vee(A^1_{\zeta_1}+A^3_{\zeta_1})+da_1\wedge(A^0_{\zeta_1}+A^2_{\zeta_1})-da_1\vee A^2_{\zeta_1}\]
with $ A_{\zeta_1} = \sum_0^3 A_{\zeta_1}^l $.
By (5) in Lemma 2.2 of \cite{HT}, this gives that
\[ \norm{Q(\gamma_1, \mu_1, \omega)A_{\zeta_1}}{}{\dot{X}^{-1/2}_{\zeta_1}} \lesssim s^{-1/2} + \norm{\chi (d + \delta) f_{\zeta_1}}{}{\dot{X}^{-1/2}_{\zeta_1}}. \]
Now, an immediate modification of Lemma 3.1 in \cite{HT} allows to check that
\[ \frac{1}{\lambda}\int_{S^1}\int_\lambda^{2\lambda}\norm{\chi (d + \delta) f_{\zeta_1}}{2}{\dot{X}^{-1/2}_{\zeta_1}}~\dd s~\dd\eta_1 = o(\textbf{1}(\lambda)), \]
which implies 
\begin{equation}
\frac{1}{\lambda}\int_{S^1}\int_\lambda^{2\lambda}\norm{Q(\gamma_1, \mu_1, \omega) A_{\zeta_1}}{2}{\dot{X}^{-1/2}_{\zeta_1}}~\dd s~\dd\eta_1 = o(\textbf{1}(\lambda))\label{es:deacyQA}
\end{equation}
as $\lambda$ becomes large. By \eqref{es:INICIALdecay}, we obtain \eqref{eq:avg-decay-multi-Q}.
\end{proof}

From the construction of $ R_{\zeta_1} \in \dot{X}^{1/2}_{\zeta_1} $ solving \eqref{eq:CGO-HL+Q1}, the existence of $ w_1 $ of the form \eqref{id:CGO1} that solves \eqref{eq:HL+Q1} is immediate. However, it turns out that for such $ w_1 $ to satisfy the condition in Proposition \ref{prop:int-formula}, the constant 1-form $A_{\zeta_1}$ has to be chosen carefully.
\begin{lemma} \sl Let $w_1 = \sum_{l=0}^3w_1^l$ as in \eqref{id:CGO1} with $ \zeta_1 $, $ A_{\zeta_1} $ and  $ R_{\zeta_1} $ as in Lemma \ref{lem:CGO-HL+Q1}. Then $ w_1^l \in H^1_\mathrm{loc} (\R^3; \Lambda^l \R^3) $ and $ w_1 $ is a solution of \eqref{eq:HL+Q1}. Moreover, if $ A_{\zeta_1} $ satisfies the relation
\begin{equation}\label{eq:cond-incid}
-{\zeta_1}\vee A_{\zeta_1}^1+ikA_{\zeta_1}^0-{\zeta_1}\wedge A_{\zeta_1}^2+ikA_{\zeta_1}^3=0,
\end{equation}
then $ v_1 = \sum_0^3 v_1^l $ defined as in \eqref{eq:v1ofw1} satisfies $ v_1^0 + v_1^3 = 0 $ for $ |\zeta_1| $ large enough.
\end{lemma}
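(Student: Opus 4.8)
The plan is to dispatch the two easy assertions first and then devote the bulk of the argument to the condition $v_1^0+v_1^3=0$. For the regularity $w_1^l\in H^1_\mathrm{loc}(\R^3;\Lambda^l\R^3)$, I would note that $R_{\zeta_1}\in\dot X^{1/2}_{\zeta_1}$ gives $R_{\zeta_1}^l\in H^1_\mathrm{loc}$ by Remark \ref{rem:smoothness}, while $A_{\zeta_1}$ and $e_{\zeta_1}$ are smooth; hence each $w_1^l=e_{\zeta_1}(A_{\zeta_1}^l+R_{\zeta_1}^l)$ lies in $H^1_\mathrm{loc}$. That $w_1$ solves \eqref{eq:HL+Q1} is immediate from the construction: since $\Delta_{\zeta_1}=e_{-\zeta_1}(d\delta+\delta d)\circ e_{\zeta_1}$ and $\inner{\zeta_1}{\zeta_1}=-k^2$, conjugating \eqref{eq:CGO-HL+Q1} (which $R_{\zeta_1}$ solves in $\dot X^{1/2}_{\zeta_1}$) by $e_{\zeta_1}$ turns it into the weak Hodge--Helmholtz equation \eqref{eq:HL+Q1} for $w_1=e_{\zeta_1}(A_{\zeta_1}+R_{\zeta_1})$.

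For the last assertion, the first step is to invoke Proposition \ref{prop:HL+Q}: because $w_1$ solves \eqref{eq:HL+Q1}, the form $v_1=P(d+\delta;\gamma_1,\mu_1,\omega)^t w_1$ is a weak solution of \eqref{eq:rescaled-equation}, i.e. $P(d+\delta;\gamma_1,\mu_1,\omega)v_1=0$, with $v_1^l\in H^1_\mathrm{loc}$. Proposition \ref{prop:v0v3eq0} then tells me that $v_1^0+v_1^3$ satisfies, by itself, the decoupled homogeneous equation \eqref{eq:v0v3-Sch} carrying the scalar weak potential $\tilde q(\gamma_1,\mu_1,\omega)$. Conjugating this equation by $e_{\zeta_1}$, the form $\psi:=e_{-\zeta_1}(v_1^0+v_1^3)$ solves $(\Delta_{\zeta_1}-k^2)\psi+\tilde q_{\zeta_1}\psi=0$ weakly, where $\tilde q_{\zeta_1}$ denotes the conjugated potential.

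The role of \eqref{eq:cond-incid} appears when I expand $v_1=e_{\zeta_1}P(d+\delta;\gamma_1,\mu_1,\omega)^t_{\zeta_1}(A_{\zeta_1}+R_{\zeta_1})$, where the subscript $\zeta_1$ marks conjugation of the first-order part $(d+\delta)$ into $(d+\delta)+(\zeta_1\wedge-\zeta_1\vee)$. Collecting the $0$- and $3$-form components and isolating the terms that are constant in $x$, the leading contribution is exactly $e_{\zeta_1}$ times $(-\zeta_1\vee A_{\zeta_1}^1+ikA_{\zeta_1}^0)+(-\zeta_1\wedge A_{\zeta_1}^2+ikA_{\zeta_1}^3)$, using that $i\omega\gamma_1^{1/2}\mu_1^{1/2}$ equals the constant $ik$ off the support of the coefficients (where $\gamma_1=\varepsilon_0$, $\mu_1=\mu_0$). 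This is a nonzero constant multiple of $e_{\zeta_1}$ unless \eqref{eq:cond-incid} holds; since $e_{\zeta_1}\cdot\mathrm{const}$ carries no decay it cannot lie in $\dot X^{1/2}_{\zeta_1}$, and \eqref{eq:cond-incid} forces its two homogeneous ($0$- and $3$-form) summands to vanish separately, removing precisely this obstruction. The remaining contributions to $\psi$ split into conjugated first-order images $\delta_{\zeta_1}R_{\zeta_1}^1$, $d_{\zeta_1}R_{\zeta_1}^2$ and zeroth-order terms carrying either $R_{\zeta_1}$ or the compactly supported factors $da_1$, $db_1$, $\gamma_1^{1/2}\mu_1^{1/2}-\sqrt{\varepsilon_0\mu_0}$.

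The main step, and the one I expect to be the real obstacle, is to verify that this $\psi$ actually belongs to $\dot X^{1/2}_{\zeta_1}$; once that is in hand the conclusion is automatic. Indeed, $\psi$ then solves $(I+(\Delta_{\zeta_1}-k^2)^{-1}\tilde q_{\zeta_1})\psi=0$, and by the same Neumann-series mechanism as in Lemma \ref{lem:CGO-HL+Q1}---the norm identity $\norm{(\Delta_{\zeta_1}+\inner{\zeta_1}{\zeta_1})^{-1}}{}{\dot X^{-1/2}_{\zeta_1}\to\dot X^{1/2}_{\zeta_1}}=1$ together with $\norm{\tilde q_{\zeta_1}}{}{\dot X^{1/2}_{\zeta_1}\to\dot X^{-1/2}_{\zeta_1}}=o(\mathbf{1}(|\zeta_1|))$---the operator is invertible for $|\zeta_1|$ large, whence $\psi=0$ and therefore $v_1^0+v_1^3=0$. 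Establishing $\psi\in\dot X^{1/2}_{\zeta_1}$ is delicate because the $\dot X^{1/2}_{\zeta_1}$-norm is a global weighted condition sensitive to the zero set of $p_{\zeta_1}$: the first-order pieces $\delta_{\zeta_1}R_{\zeta_1}$, $d_{\zeta_1}R_{\zeta_1}$ and the low-regularity ($C^0$) coefficient terms are not controlled by naive derivative counting. I would handle them through the mapping estimates of Lemma 2.2 of \cite{HT} (items (5)--(6)) and the finite-band/Bernstein inequality already used in Remark \ref{rem:smoothness}, exploiting the smallness of $\norm{R_{\zeta_1}}{}{\dot X^{1/2}_{\zeta_1}}$ from Lemma \ref{lem:CGO-HL+Q1} and the compact support of the coefficient factors; this is where the bulk of the technical work lies.
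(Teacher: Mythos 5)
Your overall architecture matches the paper's proof exactly: regularity and the solution property of $w_1$ from Remark \ref{rem:smoothness} and the construction of $R_{\zeta_1}$; then Proposition \ref{prop:HL+Q} followed by Proposition \ref{prop:v0v3eq0} to isolate the decoupled equation for $v_1^0+v_1^3$; the identification of \eqref{eq:cond-incid} with the vanishing of the constant (non-decaying) part $B^0_{\zeta_1}+B^3_{\zeta_1}$ of $e_{-\zeta_1}(v_1^0+v_1^3)$; and finally uniqueness of the zero solution in $\dot X^{1/2}_{\zeta_1}$ via \eqref{id:Dz-k2=1}, the $o(\mathbf{1}(|\zeta_1|))$ bound on $\tilde q$, and the Banach fixed-point theorem. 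All of that is correct and is what the paper does.

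The gap is in the one step you yourself flag as the crux: showing $S^0_{\zeta_1}+S^3_{\zeta_1}\in\dot X^{1/2}_{\zeta_1}$. Your plan is to estimate the terms of the explicit expansion individually, in particular the conjugated first-order pieces $-\zeta_1\vee R^1_{\zeta_1}+\delta R^1_{\zeta_1}$ and $-\zeta_1\wedge R^2_{\zeta_1}-dR^2_{\zeta_1}$, using items (5)--(6) of Lemma 2.2 of \cite{HT} and Bernstein. This cannot work term by term: the Fourier multiplier of such a piece has size comparable to the Hermitian norm $|\zeta_1+i\xi|$, and the only algebraic relation available, $(\zeta_1+i\xi)\cdot(\zeta_1+i\xi)=-k^2-p_{\zeta_1}(\xi)$, constrains the bilinear square, not $|\zeta_1+i\xi|$, which remains large on the characteristic set of $p_{\zeta_1}$. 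Hence $R_{\zeta_1}\in\dot X^{1/2}_{\zeta_1}$ does not place $\delta_{\zeta_1}R^1_{\zeta_1}$ in $\dot X^{1/2}_{\zeta_1}$, however small $\norm{R_{\zeta_1}}{}{\dot X^{1/2}_{\zeta_1}}$ is, and the cited embedding estimates of \cite{HT} are of the wrong type to repair this. The paper instead extracts membership from the equation: since $v_1^l\in H^1_\mathrm{loc}$ and $\tilde q(\gamma_1,\mu_1,\omega)$ is compactly supported, one has $\chi(S^0_{\zeta_1}+S^3_{\zeta_1})\in\dot X^{1/2}_{\zeta_1}$ for a cutoff $\chi$ equal to $1$ on $\supp d\gamma_1\cup\supp d\mu_1$, so the right-hand side of $(\Delta_{\zeta_1}-k^2)(S^0_{\zeta_1}+S^3_{\zeta_1})=-\tilde q(\gamma_1,\mu_1,\omega)\chi(S^0_{\zeta_1}+S^3_{\zeta_1})$ lies in $\dot X^{-1/2}_{\zeta_1}$; combined with $\widehat{S^l_{\zeta_1}}\in L^2_\mathrm{loc}$ (read off from \eqref{id:S0zeta1} and \eqref{id:S3zeta1}, which controls the behaviour near the zero set of $p_{\zeta_1}$), division by the symbol gives $S^0_{\zeta_1}+S^3_{\zeta_1}\in\dot X^{1/2}_{\zeta_1}$. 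You should replace your direct-estimation plan for this step with that equation-based argument.
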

\begin{proof} We can ensure $ w_1^l $ is in $ H^1_\mathrm{loc} (\R^3; \Lambda^l \R^3) $ since $ R_{\zeta_1} \in \dot{X}^{1/2}_{\zeta_1} $ (See Remark \ref{rem:smoothness}). Additionally, $ w_1 $ is a solution of \eqref{eq:HL+Q1} since $ R_{\zeta_1} \in \dot{X}^{1/2}_{\zeta_1} $ solves\footnote{See also \eqref{eq:DELTA-zeta}.} \eqref{eq:CGO-HL+Q1}. 

In order to prove the second part of this lemma, note that $ v_1^l \in H^1_\mathrm{loc} (\R^3; \Lambda^l \R^3) $ and
\[P(d + \delta; \gamma_1, \mu_1, \omega) v_1 = 0\] 
in any bounded open subset of $ \R^3 $ by Proposition \ref{prop:HL+Q}. Then by Proposition \ref{prop:v0v3eq0} we know that $ v^0_1 + v_1^3 $ is a weak solution to
\[ (\delta d + d \delta - k^2) (v^0_1 + v_1^3) + \tilde{q}(\gamma_1, \mu_1, \omega) (v^0_1 + v_1^3) = 0 \]
in $ \R^3 $. By \eqref{eq:v1ofw1}, we can write $ v_1^l = e_{\zeta_1} (B^l_{\zeta_1} + S^l_{\zeta_1}) $ with $ l \in \{ 0, 3 \} $ where
\begin{align}
B^0_{\zeta_1} &= -{\zeta_1}\vee A_{\zeta_1}^1+ikA_{\zeta_1}^0, \nonumber \\
S^0_{\zeta_1} &= -{\zeta_1}\vee R_{\zeta_1}^1 + \delta R_{\zeta_1}^1 + db \vee (A_{\zeta_1}^1 + R_{\zeta_1}^1) + i \omega \gamma_1^{1/2} \mu_1^{1/2} R_{\zeta_1}^0 \label{id:S0zeta1} \\
& + i (\omega \gamma_1^{1/2} \mu_1^{1/2} - k) A_{\zeta_1}^0, \nonumber \\
B^3_{\zeta_1} &= -{\zeta_1}\wedge A_{\zeta_1}^2+ikA_{\zeta_1}^3, \nonumber \\
S^3_{\zeta_1} &= -{\zeta_1}\wedge R_{\zeta_1}^2 - d R_{\zeta_1}^2 + da \wedge (A_{\zeta_1}^2 + R_{\zeta_1}^2) + i \omega \gamma_1^{1/2} \mu_1^{1/2} R_{\zeta_1}^3 \label{id:S3zeta1} \\
& + i (\omega \gamma_1^{1/2} \mu_1^{1/2} - k) A_{\zeta_1}^3. \nonumber
\end{align}
Then relation \eqref{eq:cond-incid} implies $B_{\zeta_1}^0+B_{\zeta_1}^3=0$ hence that $ v^0_1 + v_1^3 = e_{\zeta_1} (S^0_{\zeta_1} + S^3_{\zeta_1}) $ is a weak solution of
\begin{equation}
(\Delta_{\zeta_1} - k^2) (S^0_{\zeta_1} + S^3_{\zeta_1}) + \tilde{q}(\gamma_1, \mu_1, \omega) (S^0_{\zeta_1} + S^3_{\zeta_1}) = 0 \label{eq:CGOv0v3}
\end{equation}
in $ \R^3 $. 

To complete the proof, it is sufficient to show that \eqref{eq:CGOv0v3} is uniquely solvable in $ \dot{X}^{1/2}_{\zeta_1} $ for $ |\zeta_1| $ large enough and $ S^0_{\zeta_1} + S^3_{\zeta_1} $ belongs to $ \dot{X}^{1/2}_{\zeta_1} $.

Using the same argument as in proving \eqref{es:Q1}, we see that $ \tilde{q}(\gamma_1, \mu_1, \omega) $ is a bounded linear operator from $ \dot{X}^{1/2}_{\zeta_1} $ to $ \dot{X}^{-1/2}_{\zeta_1} $ and its operator norm is $ o(\mathbf{1}(|\zeta_1|)) $. Then, by Remark \ref{rem:uniqueness}, identity \eqref{id:Dz-k2=1} and the Banach fixed-point theorem, equation \eqref{eq:CGOv0v3} is unique solvable in $ \dot{X}^{1/2}_{\zeta_1} $ for $|\zeta_1|$ large enough.

Since $ e_{\zeta_1} S_{\zeta_1}^l = v^l_1 \in H^1_\mathrm{loc} (\R^3; \Lambda^l \R^3) $ for $ l \in \{ 0, 3 \} $, we know that $ \chi (S^0_{\zeta_1} + S^3_{\zeta_1}) \in \dot{X}^{1/2}_{\zeta_1} $ for $ \chi \in C^\infty_0 (\R^3) $ such that $ \chi (x) = 1 $ for all $ x \in (\supp d \gamma_1 \cup \supp d \mu_1) $. Therefore, the right hand side of
\[ (\Delta_{\zeta_1} - k^2) (S^0_{\zeta_1} + S^3_{\zeta_1}) = - \tilde{q}(\gamma_1, \mu_1, \omega) \chi (S^0_{\zeta_1} + S^3_{\zeta_1}) \]
is in $\dot{X}^{-1/2}_{\zeta_1}$. Further, it is not hard to see from \eqref{id:S0zeta1} and \eqref{id:S3zeta1} that $ \widehat{S^l_{\zeta_1}} $ belongs to $ L^2_\mathrm{loc} (\R^3; \Lambda^l \R^3) $ with $ l \in \{ 0, 3 \} $. The last two facts imply that $ S^0_{\zeta_1} + S^3_{\zeta_1} \in \dot{X}^{1/2}_{\zeta_1} $.
\end{proof}

\begin{remark} \sl The condition given by \eqref{eq:cond-incid} is necessary in our proof since $ B^0_{\zeta_1} + B^3_{\zeta_1} $ does not belong to $ \dot{X}^{1/2}_{\zeta_1} $.
\end{remark}

As a conclusion of these lemmas, we can state the constructions of $ w_1 $ in the following theorem.

\begin{theorem} \label{th:CGOw_1} \sl Let $ s \in \R $ satisfy $ s \geq 1 $. Given a real-valued constant $ 1 $-form $ \rho $ in $ \R^3 $, choose $ \eta_1 $ and $ \eta_2 $ also real-valued constant $ 1 $-forms in $ \R^3 $ such that $ \inner{\eta_1}{\eta_2} = 0 $, $ \inner{\eta_j}{\rho} = 0 $ and $ |\eta_j| = 1 $ for $ j \in \{ 1, 2 \} $. Set
\[ \zeta_1 = -\sqrt{s^2+\frac{|\rho|^2}{4}}~\eta_1+i\left(\frac{\rho}{2}-\sqrt{s^2+k^2}~\eta_2\right) \]
and
\[ A_{\zeta_1} = \frac{\sqrt 2}{|\zeta_1|} \left( \zeta_1 \vee \alpha + ik \alpha + ik \beta + \zeta_1 \wedge \beta \right), \]
where either $ \alpha = \eta_1 $ and $ \beta = 0 $ or $ \alpha = 0 $ and $ \beta = |\rho|^{-1} \eta_2 \wedge \rho $. Then for $ |\zeta_1| $ large enough, there exists $ w_1 = \sum_0^3 w_1^l $ with $ w_1^l \in H^1_\mathrm{loc} (\R^3; \Lambda^l \R^3) $ of the form
\[ w_1 = e_{\zeta_1} (A_{\zeta_1} + R_{\zeta_1}),\]
which is a weak solution to
\[ (d \delta + \delta d - k^2)w_1 + Q(\gamma_1, \mu_1, \omega) w_1 = 0 \]
in $ \R^3 $. Moreover, we have $ R_{\zeta_1} \in \dot{X}^{1/2}_{\zeta_1} $ satisfies
\[ \frac{1}{\lambda}\int_{S^1}\int_\lambda^{2\lambda}\norm{R_{\zeta_1}}{2}{\dot{X}^{1/2}_{\zeta_1}}~\dd s~\dd\eta_1 = o(\textbf{1}(\lambda)) \]
as $ \lambda $ becomes large.  Here $ S^1 $ denotes the intersection between the unit sphere in $ \R^3 $ and the plane defined by $ \eta_1 $ and $ \eta_2 $. Furthermore, $ v_1 = \sum_0^3 v_1^l $ defined by
\begin{equation*}
v_1 = P(d + \delta ;\gamma_1, \mu_1, \omega)^t w_1
\end{equation*}
satisfies $ v_1^0 + v_1^3 = 0 $ for $ |\zeta_1| $ large enough.
\end{theorem}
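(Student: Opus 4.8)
The plan is to obtain this statement as a direct assembly of Lemma \ref{lem:CGO-HL+Q1}, Lemma \ref{lem:ave-decay} and the preceding (unlabelled) lemma, once we verify that the explicitly prescribed $ \zeta_1 $ and $ A_{\zeta_1} $ satisfy the hypotheses of those three results. Since the analytic content has already been established, the work here is almost entirely algebraic verification of the admissibility of $ \zeta_1 $, the boundedness of $ A_{\zeta_1} $, and the incidence relation \eqref{eq:cond-incid}.

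First I would verify that $ \inner{\zeta_1}{\zeta_1} = -k^2 $, so that $ \zeta_1 $ is admissible. Writing $ \zeta_1 = a + ib $ with $ a = -\sqrt{s^2 + |\rho|^2/4}\,\eta_1 $ and $ b = \rho/2 - \sqrt{s^2+k^2}\,\eta_2 $, the relations $ \inner{\eta_1}{\eta_2} = \inner{\eta_1}{\rho} = \inner{\eta_2}{\rho} = 0 $ together with $ |\eta_1| = |\eta_2| = 1 $ give $ \inner{a}{b} = 0 $, $ \inner{a}{a} = s^2 + |\rho|^2/4 $ and $ \inner{b}{b} = |\rho|^2/4 + s^2 + k^2 $, whence $ \inner{\zeta_1}{\zeta_1} = \inner{a}{a} - \inner{b}{b} + 2i\inner{a}{b} = -k^2 $. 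The same computation shows $ |\zeta_1|^2 = \inner{a}{a} + \inner{b}{b} \gtrsim s^2 $, so $ |\zeta_1| $ is large precisely when $ s $ (equivalently $ \lambda $) is large; this ensures that all three ``for $ |\zeta_1| $ large enough'' hypotheses hold uniformly along the integration range $ s \in [\lambda, 2\lambda] $.

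Next I would check the boundedness of $ A_{\zeta_1} $ required by Lemma \ref{lem:ave-decay}. Since $ |\zeta_1 \vee \alpha| \lesssim |\zeta_1||\alpha| $ and $ |\zeta_1 \wedge \beta| \lesssim |\zeta_1||\beta| $, the prefactor $ \sqrt2/|\zeta_1| $ renders $ A_{\zeta_1} $ bounded as a function of $ s, \eta_1 $ in both prescribed cases. I would then separate the graded components of $ A_{\zeta_1} $: when $ \alpha = \eta_1 $ and $ \beta = 0 $, only $ A^0_{\zeta_1} = \tfrac{\sqrt2}{|\zeta_1|}\zeta_1 \vee \alpha $ and $ A^1_{\zeta_1} = \tfrac{\sqrt2}{|\zeta_1|}ik\alpha $ are nonzero; when $ \alpha = 0 $ and $ \beta = |\rho|^{-1}\eta_2 \wedge \rho $, only $ A^2_{\zeta_1} = \tfrac{\sqrt2}{|\zeta_1|}ik\beta $ and $ A^3_{\zeta_1} = \tfrac{\sqrt2}{|\zeta_1|}\zeta_1 \wedge \beta $ are nonzero. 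Substituting into the left-hand side of \eqref{eq:cond-incid}, the first case reduces to $ \tfrac{\sqrt2\,ik}{|\zeta_1|}\bigl(-\zeta_1 \vee \alpha + \zeta_1 \vee \alpha\bigr) = 0 $ and the second to $ \tfrac{\sqrt2\,ik}{|\zeta_1|}\bigl(-\zeta_1 \wedge \beta + \zeta_1 \wedge \beta\bigr) = 0 $, so \eqref{eq:cond-incid} holds in both cases.

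With these three verifications in hand the conclusions follow immediately: Lemma \ref{lem:CGO-HL+Q1} produces $ R_{\zeta_1} \in \dot{X}^{1/2}_{\zeta_1} $ solving \eqref{eq:CGO-HL+Q1} for $ |\zeta_1| $ large, so $ w_1 = e_{\zeta_1}(A_{\zeta_1} + R_{\zeta_1}) $ solves \eqref{eq:HL+Q1} with $ w_1^l \in H^1_{\mathrm{loc}}(\R^3;\Lambda^l\R^3) $ by Remark \ref{rem:smoothness}; Lemma \ref{lem:ave-decay} then yields the average decay of $ \norm{R_{\zeta_1}}{2}{\dot{X}^{1/2}_{\zeta_1}} $; and the preceding lemma, whose hypothesis \eqref{eq:cond-incid} we have just checked, gives $ v_1^0 + v_1^3 = 0 $ for $ |\zeta_1| $ large. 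The only genuinely delicate point is the bookkeeping of the form degrees in $ A_{\zeta_1} $---ensuring that the two prescribed $ (\alpha,\beta) $ choices land in the correct degrees so that \eqref{eq:cond-incid} is met---together with confirming that a single threshold on $ |\zeta_1| $ (equivalently on $ \lambda $) makes all three lemmas applicable simultaneously.
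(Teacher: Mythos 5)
Your proposal is correct and matches the paper's approach: the paper states Theorem \ref{th:CGOw_1} explicitly ``as a conclusion of these lemmas'' with no separate proof, and your assembly of Lemma \ref{lem:CGO-HL+Q1}, Lemma \ref{lem:ave-decay} and the unlabelled lemma is exactly what is intended. Your explicit verifications --- that $\inner{\zeta_1}{\zeta_1}=-k^2$, that $|A_{\zeta_1}|$ is bounded, that the graded components of $A_{\zeta_1}$ make \eqref{eq:cond-incid} hold identically, and that $|\zeta_1|\gtrsim s$ so a single threshold on $\lambda$ suffices --- correctly fill in the details the paper leaves implicit.
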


\subsection{The construction of $ v_2 $.} Let $ \zeta_2 $ be a complex-valued constant $ 1 $-form in $ \R^3 $ satisfying $ \inner{\zeta_2}{\zeta_2} = -k^2 $. We are looking for the solution $ v_2 = \sum_0^3 v^l_2 $ with $ v^l_2 \in H^1_\mathrm{loc}(\R^3; \Lambda^l \R^3) $ to \eqref{eq:Ptv2=0} in any bounded subset of $ \R^3 $ of the form
\begin{equation}
v_2 = e_{\zeta_2} (B_{\zeta_2} + S_{\zeta_2}) \label{id:CGO2}
\end{equation}
where $ B_{\zeta_2} $ is a constant graded differential form in $ \R^3 $ and $ S_{\zeta_2} \in \dot{X}^{1/2}_{\zeta_2} $. In addition, we want $ S_{\zeta_2} $ to be small in the sense of \eqref{eq:avg-decay-multi-Q}. To construct such $ v_2 $, by Proposition \ref{prop:HL+tQ}, we start with the construction of a solution $ w_2 $ to 
\begin{equation}\label{eq:HlmtQw2}
\int_{\R^3} \inner{\delta w_2}{\delta \varphi} + \inner{d w_2}{d \varphi} - \omega^2 \varepsilon_0 \mu_0 \inner{w_2}{\varphi} \, \dd x + \Duality{\tilde{Q} (\gamma_2, \mu_2, \omega) w_2}{\varphi} = 0 
\end{equation}
for all $ \varphi = \sum_0^3 \varphi^l $ with $ \varphi^l \in C^\infty_0 (\R^3;\Lambda^l \R^3) $.

\begin{lemma} \label{lem:HL+Q2} \sl Let $ A_{\zeta_2} = A_{\zeta_2}^1 + A_{\zeta_2}^2 $ be a constant graded differential form in $ \R^3 $. For $ |\zeta_2| $ large enough, there exists $ R_{\zeta_2} = R_{\zeta_2}^1 + R_{\zeta_2}^2 \in \dot{X}^{1/2}_{\zeta_2} $ such that $ w_2 = w_2^1 + w_2^2 $ with
\[ w_2^l = e_{\zeta_2} (A_{\zeta_2}^l + R_{\zeta_2}^l) \]
and $ w_2^l \in H^1_\mathrm{loc} (\R^3; \Lambda^l \R^3) $, is a solution of \eqref{eq:HlmtQw2} in $ \R^3 $.
\end{lemma}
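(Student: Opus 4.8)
The plan is to repeat, for the transpose potential $\tilde{Q}(\gamma_2,\mu_2,\omega)$, the construction of $w_1$ carried out in Lemma \ref{lem:CGO-HL+Q1}. Substituting the ansatz $w_2 = e_{\zeta_2}(A_{\zeta_2} + R_{\zeta_2})$ into \eqref{eq:HlmtQw2} and conjugating the Hodge--Helmholtz operator $d\delta + \delta d - k^2$ by $e_{\zeta_2}$, the leading part becomes $\Delta_{\zeta_2} - k^2$. Since $A_{\zeta_2}$ is constant and $\inner{\zeta_2}{\zeta_2} = -k^2$, one has $(\Delta_{\zeta_2} - k^2)A_{\zeta_2} = 0$, so \eqref{eq:HlmtQw2} reduces to finding $R_{\zeta_2} \in \dot{X}^{1/2}_{\zeta_2}$ solving
\begin{equation*}
(\Delta_{\zeta_2} - k^2) R_{\zeta_2} + \tilde{Q}(\gamma_2,\mu_2,\omega) R_{\zeta_2} = - \tilde{Q}(\gamma_2,\mu_2,\omega) A_{\zeta_2}
\end{equation*}
in $\dot{X}^{1/2}_{\zeta_2}$, the exact analogue of \eqref{eq:CGO-HL+Q1}.

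I would then solve this by the same Neumann series argument as in Lemma \ref{lem:CGO-HL+Q1}. The two inputs are the norm-one bound \eqref{id:Dz-k2=1} for $(\Delta_{\zeta_2} + \inner{\zeta_2}{\zeta_2})^{-1} : \dot{X}^{-1/2}_{\zeta_2} \rightarrow \dot{X}^{1/2}_{\zeta_2}$ and the operator estimate
\begin{equation*}
\norm{\tilde{Q}(\gamma_2,\mu_2,\omega)}{}{\dot{X}^{1/2}_{\zeta_2} \rightarrow \dot{X}^{-1/2}_{\zeta_2}} = o(\textbf{1}(|\zeta_2|)),
\end{equation*}
the counterpart of \eqref{es:Q1}. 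Comparing \eqref{eq:multi-tQ} with \eqref{eq:multi-Q}, the terms making up $\tilde{Q}$ are of exactly the same type as those of $Q$, with only the roles of $da_2$ and $db_2$ and a few signs interchanged; hence this estimate follows from the same mollified pairing, splitting $da_2, db_2$ into $\alpha_h, \beta_h$ and a remainder, invoking the modifications of Corollary 2.1 and Lemma 2.3 of \cite{HT}, and choosing $h = |\zeta_2|^{-1/2}$. Since also $\tilde{Q}(\gamma_2,\mu_2,\omega) A_{\zeta_2} \in \dot{X}^{-1/2}_{\zeta_2}$, for $|\zeta_2|$ large enough this produces a unique $R_{\zeta_2} \in \dot{X}^{1/2}_{\zeta_2}$, with a bound of the form \eqref{es:INICIALdecay}. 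The local regularity $w_2^l \in H^1_\mathrm{loc}(\R^3; \Lambda^l \R^3)$ then follows from $R_{\zeta_2} \in \dot{X}^{1/2}_{\zeta_2}$ by Remark \ref{rem:smoothness}.

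The one point that is genuinely new, rather than a transcription of Lemma \ref{lem:CGO-HL+Q1}, is to verify that $R_{\zeta_2}$ has only $1$- and $2$-form components, so that $R_{\zeta_2} = R_{\zeta_2}^1 + R_{\zeta_2}^2$ as claimed; I expect this grade-bookkeeping to be the main thing requiring care. Inspecting \eqref{eq:multi-tQ}, when $w = w^1 + w^2$ the coupling terms $d(\gamma_2^{1/2}\mu_2^{1/2}) \wedge w^1$ and $d(\gamma_2^{1/2}\mu_2^{1/2}) \vee w^2$ stay within the $1$-$2$ grading, and $\Duality{\tilde{Q}(\gamma_2,\mu_2,\omega) w}{\varphi}$ vanishes for $\varphi = \varphi^0$ and $\varphi = \varphi^3$; together with the fact that $\Delta_{\zeta_2} - k^2$ preserves the grading, this shows that the whole fixed-point operator maps the subspace of $1$- and $2$-forms into itself. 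As $A_{\zeta_2} = A_{\zeta_2}^1 + A_{\zeta_2}^2$ lies in this subspace, every term of the Neumann series does too, and therefore so does $R_{\zeta_2}$.
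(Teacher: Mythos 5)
Your proposal is correct and takes essentially the same route as the paper: the paper's proof likewise reduces to the Neumann-series argument of Lemma \ref{lem:CGO-HL+Q1} via the estimate $\norm{\tilde{Q}(\gamma_2,\mu_2,\omega)}{}{\dot{X}^{1/2}_{\zeta_2}\rightarrow\dot{X}^{-1/2}_{\zeta_2}}=o(\mathbf{1}(|\zeta_2|))$, and obtains $R_{\zeta_2}=R^1_{\zeta_2}+R^2_{\zeta_2}$ from exactly the decoupling of $\tilde{Q}$ on the $1$- and $2$-form components that you verify. Your final paragraph merely spells out in detail what the paper states in a single line.
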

\begin{proof} Analogous to the proof of Lemma \ref{lem:CGO-HL+Q1}, the existence of a general $R_{\zeta_2}=\sum_{0}^3R_{\zeta_2}^l$ for given constant $A_{\zeta_2}=\sum_{0}^3A_{\zeta_2}^l$ is immediate by
\[ \norm{\tilde{Q}(\gamma_2, \mu_2, \omega)}{}{\dot{X}^{1/2}_{\zeta_2} \rightarrow \dot{X}^{-1/2}_{\zeta_2}} = o(\textbf{1}(|\zeta_2|)) \]
as $ |\zeta_2| $ becomes large.  Since $ \tilde{Q}(\gamma_2, \mu_2, \omega) $ decouples for $ 1 $ and $ 2 $ forms, therefore we can ensure that $ R_{\zeta_2} = R_{\zeta_2}^1 + R_{\zeta_2}^2 $ for $A_{\zeta_2}=A_{\zeta_2}^1+A_{\zeta_2}^2$. 
\end{proof}

Now Proposition \ref{prop:HL+tQ} states that $v_2= P(d + \delta; \gamma_2, \mu_2, \omega) w_2$ is then a solution to \eqref{eq:Ptv2=0}. Moreover, we can write $v_2$ as in\eqref{id:CGO2}. However, we need still to show the smallness of $ S_{\zeta_2} $.

\begin{theorem} \label{th:CGOv_2} \sl Let $ s \in \R $ satisfy $ s \geq 1 $. Given a real-valued constant $ 1 $-form $ \rho $ in $ \R^3 $, we choose $ \eta_1 $ and $ \eta_2 $ two other real-valued constant $ 1 $-forms in $ \R^3 $ such that $ \inner{\eta_1}{\eta_2} = 0 $, $ \inner{\eta_j}{\rho} = 0 $ and $ |\eta_j| = 1 $ for $ j \in \{ 1, 2 \} $. Set
\[ \zeta_2 = \sqrt{s^2+\frac{|\rho|^2}{4}}~\eta_1+i\left(\frac{\rho}{2}+\sqrt{s^2+k^2}~\eta_2\right) \]
and $ \alpha $ and $ \beta $ be as in Theorem \ref{th:CGOw_1}. If $ |\zeta_2| $ is large enough, there exists $ v_2 = \sum_0^3 v^l_2 $ with $ v^l_2 \in H^1_\mathrm{loc} (\R^3; \Lambda^l \R^3) $ of the form 
\[ v_2 = e_{\zeta_2} (B_{\zeta_2} + S_{\zeta_2}), \]
where
\begin{equation}\label{eq:B-zeta2}
B_{\zeta_2} = - \frac{\sqrt{2}}{|\zeta_2|} (\zeta_2 \vee (\alpha + \beta) + \zeta_2 \wedge (- \alpha + \beta) + ik (\alpha + \beta)) 
\end{equation}
and $ S_{\zeta_2} \in \dot{X}^{1/2}_{\zeta_2} $, which solves 
\[ P(d + \delta; \gamma_2, \mu_2, \omega)^t v_2 = 0 \]
in any bounded open subset of $ \R^3 $ and satisfies
\begin{equation}\label{eq:avgdecay-S-zeta2}
\frac{1}{\lambda}\int_{S^1}\int_\lambda^{2\lambda}\norm{S_{\zeta_2}}{2}{\dot{X}^{1/2}_{\zeta_2}}~\dd s~\dd\eta_1 = o(\textbf{1}(\lambda))
\end{equation}
as $ \lambda $ becomes large. Here $ S^1 $ denotes the intersection between the unit sphere in $ \R^3 $ and the plane defined by $ \eta_1 $ and $ \eta_2 $.
\end{theorem}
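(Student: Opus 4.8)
The plan is to recognize $v_2$ as a complex geometrical optics solution of exactly the same graded Hodge--Helmholtz equation that governs $w_1$, so that the smallness of $S_{\zeta_2}$ is inherited from Lemmas \ref{lem:CGO-HL+Q1} and \ref{lem:ave-decay} rather than proved anew. First I would invoke Lemma \ref{lem:HL+Q2} to produce $w_2 = w_2^1 + w_2^2 = e_{\zeta_2}(A_{\zeta_2}+R_{\zeta_2})$ solving \eqref{eq:HlmtQw2}, choosing the constant forms $A_{\zeta_2}^1$ and $A_{\zeta_2}^2$ to be the multiples of $\alpha$ and $\beta$ (scaled by $\sqrt 2/|\zeta_2|$, with signs dictated by the $\wedge/\vee$ conventions) for which the leading symbol of $P(d+\delta;\gamma_2,\mu_2,\omega)$ applied to $e_{\zeta_2}A_{\zeta_2}$ equals $e_{\zeta_2}B_{\zeta_2}$ with $B_{\zeta_2}$ as in \eqref{eq:B-zeta2}; this is the same symbol computation used to select $A_{\zeta_1}$ in Theorem \ref{th:CGOw_1}. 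By Proposition \ref{prop:HL+tQ}, $v_2:=P(d+\delta;\gamma_2,\mu_2,\omega)w_2$ then has components in $H^1_\mathrm{loc}$ and solves $P(d+\delta;\gamma_2,\mu_2,\omega)^t v_2 = 0$ in every bounded open set, and expanding $v_2 = e_{\zeta_2}\,e_{-\zeta_2}Pw_2$ while peeling off the constant leading form gives the representation $v_2 = e_{\zeta_2}(B_{\zeta_2}+S_{\zeta_2})$ of \eqref{id:CGO2}, where $S_{\zeta_2}$ collects the contributions of $R_{\zeta_2}$, of the first-order terms $da_2,db_2$ acting on $A_{\zeta_2}+R_{\zeta_2}$, and of the correction $i(\omega\gamma_2^{1/2}\mu_2^{1/2}-k)A_{\zeta_2}$.

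The key observation is that $v_2$ in fact solves the index-$2$ analogue of \eqref{eq:HL+Q1}. Indeed, applying identity \eqref{id:giving_HL+Q} (valid for $H^1_\mathrm{loc}$ arguments by Remark \ref{rem:giving_HL+Q}) with $w=v_2$ and using $P(d+\delta;\gamma_2,\mu_2,\omega)^t v_2 = 0$ makes the left-hand side vanish, whence
\[
\int_{\R^3}\inner{\delta v_2}{\delta\varphi} + \inner{d v_2}{d\varphi} - \omega^2\varepsilon_0\mu_0\inner{v_2}{\varphi}\,\dd x + \Duality{Q(\gamma_2,\mu_2,\omega) v_2}{\varphi} = 0
\]
for all test $\varphi$. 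Since $B_{\zeta_2}$ is constant and $\inner{\zeta_2}{\zeta_2}=-k^2$, this is equivalent to the remainder equation $(\Delta_{\zeta_2}-k^2)S_{\zeta_2} + Q(\gamma_2,\mu_2,\omega)S_{\zeta_2} = -Q(\gamma_2,\mu_2,\omega)B_{\zeta_2}$, which is precisely \eqref{eq:CGO-HL+Q1} with $(\zeta_1,A_{\zeta_1})$ replaced by $(\zeta_2,B_{\zeta_2})$.

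Once the membership $S_{\zeta_2}\in\dot{X}^{1/2}_{\zeta_2}$ is established, Lemma \ref{lem:CGO-HL+Q1} gives $\|S_{\zeta_2}\|_{\dot{X}^{1/2}_{\zeta_2}}\lesssim \|Q(\gamma_2,\mu_2,\omega)B_{\zeta_2}\|_{\dot{X}^{-1/2}_{\zeta_2}}$ for $|\zeta_2|$ large, and Lemma \ref{lem:ave-decay}, whose hypotheses hold because $B_{\zeta_2}$ is constant in $x$ and $|B_{\zeta_2}|$ is bounded uniformly in $s$ and $\eta_1$ (it is $O(1)$), yields the average decay \eqref{eq:avgdecay-S-zeta2}. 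To obtain the membership I would argue as in the lemma preceding Theorem \ref{th:CGOw_1}: since $v_2^l = e_{\zeta_2}S_{\zeta_2}^l\in H^1_\mathrm{loc}$ (Remark \ref{rem:smoothness}), a cutoff $\chi$ equal to $1$ on $\supp d\gamma_2\cup\supp d\mu_2$ gives $\chi S_{\zeta_2}\in\dot{X}^{1/2}_{\zeta_2}$, so the right-hand side $-Q(\gamma_2,\mu_2,\omega)(B_{\zeta_2}+\chi S_{\zeta_2})$ lies in $\dot{X}^{-1/2}_{\zeta_2}$; combined with the fact that $\widehat{S_{\zeta_2}^l}\in L^2_\mathrm{loc}$, read off from the explicit expression for $S_{\zeta_2}$, this forces $S_{\zeta_2}\in\dot{X}^{1/2}_{\zeta_2}$, and uniqueness of the $\dot{X}^{1/2}_{\zeta_2}$ solution for large $|\zeta_2|$ identifies it with the one furnished by Lemma \ref{lem:CGO-HL+Q1}.

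The hard part is exactly this membership step. A naive term-by-term estimate fails, because the symbol contributions $\zeta_2\vee R_{\zeta_2}$ and $\zeta_2\wedge R_{\zeta_2}$ arising from applying the first-order operator $P(d+\delta;\gamma_2,\mu_2,\omega)$ to $e_{\zeta_2}R_{\zeta_2}$ are of size $|\zeta_2|\,\|R_{\zeta_2}\|_{\dot{X}^{1/2}_{\zeta_2}}$ in $\dot{X}^{1/2}_{\zeta_2}$ and do not remain controlled as $|\zeta_2|\to\infty$. The way around this is precisely to exploit that $v_2$ also solves the second-order equation displayed above and to deduce the location of $S_{\zeta_2}$ in $\dot{X}^{1/2}_{\zeta_2}$ from the cutoff and Fourier-support argument, after which the estimate of Lemma \ref{lem:CGO-HL+Q1} and the averaging of Lemma \ref{lem:ave-decay} transfer the decay from $\|Q(\gamma_2,\mu_2,\omega)B_{\zeta_2}\|_{\dot{X}^{-1/2}_{\zeta_2}}$ to $\|S_{\zeta_2}\|_{\dot{X}^{1/2}_{\zeta_2}}$.
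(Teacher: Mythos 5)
Your proposal follows essentially the same route as the paper: define $v_2 = P(d+\delta;\gamma_2,\mu_2,\omega)w_2$ with $w_2$ from Lemma \ref{lem:HL+Q2}, use \eqref{id:giving_HL+Q} together with $P^t v_2=0$ to show $S_{\zeta_2}$ solves $(\Delta_{\zeta_2}-k^2)S_{\zeta_2}+Q(\gamma_2,\mu_2,\omega)S_{\zeta_2}=-Q(\gamma_2,\mu_2,\omega)B_{\zeta_2}$, and then transfer the bound of Lemma \ref{lem:CGO-HL+Q1} and the averaging of Lemma \ref{lem:ave-decay} to $B_{\zeta_2}$. In fact you spell out the membership $S_{\zeta_2}\in\dot{X}^{1/2}_{\zeta_2}$ (via the cutoff and $\widehat{S^l_{\zeta_2}}\in L^2_{\mathrm{loc}}$ argument borrowed from the $w_1$ construction) more explicitly than the paper, which merely asserts it; this is a welcome addition, not a deviation.
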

\begin{proof} Let $ w_2 $ be as in Lemma \ref{lem:HL+Q2} with $ A_{\zeta_2} = A_{\zeta_2}^1 + A_{\zeta_2}^2 = - \sqrt{2} (\alpha + \beta) $. By Proposition \ref{prop:HL+tQ}, we know that $ v_2 = \sum_0^3 v^l_2 $ defined by
\[ v_2 = P(d + \delta; \gamma_2, \mu_2, \omega) w_2 \]
satisfies that $ v^l_2 \in H^1_\mathrm{loc} (\R^3; \Lambda^l \R^3) $ and solves
\begin{equation}
P(d + \delta; \gamma_2, \mu_2, \omega)^t v_2 = 0 \label{eq:Ptv2}
\end{equation}
in any bounded open subset of $ \R^3 $. One can easily write
\[ v_2 = e_{\zeta_2} (B_{\zeta_2} + S_{\zeta_2}) \]
and check that $ B_{\zeta_2} $ is given by \eqref{eq:B-zeta2} and
\begin{align*}
S_{\zeta_2} &= \frac{1}{|\zeta_2|} (\zeta_2 \vee (R_{\zeta_2}^1 + R_{\zeta_2}^2) + \zeta_2 \wedge (- R_{\zeta_2}^1 + R_{\zeta_2}^2) + (d + \delta) (- R_{\zeta_2}^1 + R_{\zeta_2}^2) \\
& + da_2 \wedge (A_{\zeta_2}^1 + R_{\zeta_2}^1) + da_2 \vee (A_{\zeta_2}^1 + R_{\zeta_2}^1) + db_2 \wedge (A_{\zeta_2}^2 + R_{\zeta_2}^2)\\
& - db_2 \vee (A_{\zeta_2}^2 + R_{\zeta_2}^2) + i \omega \gamma_2^{1/2} \mu_2^{1/2} (R_{\zeta_2}^1 + R_{\zeta_2}^2) + i (\omega \gamma_2^{1/2} \mu_2^{1/2} - k)(A_{\zeta_2}^1 + A_{\zeta_2}^2)).
\end{align*}
Moreover, by \eqref{eq:Ptv2} and \eqref{id:giving_HL+Q}, we know that $ S_{\zeta_2} $ satisfies the familiar equation
\begin{equation}
(\Delta_{\zeta_2} - k^2) S_{\zeta_2} + Q(\gamma_2, \mu_2, \omega) S_{\zeta_2} = - Q(\gamma_2, \mu_2, \omega) B_{\zeta_2}. \label{eq:CGOHL+Q2}
\end{equation}

Since $ Q(\gamma_2, \mu_2, \omega) B_{\zeta_2} \in \dot{X}^{-1/2}_{\zeta_2} $, equation \eqref{eq:CGOHL+Q2} is uniquely solvable in $ \dot{X}^{1/2}_{\zeta_2} $. Therefore, since $ S_{\zeta_2} \in \dot{X}^{1/2}_{\zeta_2} $ and $|B_{\zeta_2}|=\mathcal O\left(\textbf{1}(|\zeta_2|)\right)$, $S_{\zeta_2}$ satisfies \eqref{eq:avgdecay-S-zeta2}. 
\end{proof}

\section{Proof of uniqueness}\label{sec:uniqueness}
To complete the proof of Theorem \ref{th:main}, the final step is to plug into the integral formula given in Proposition \ref{prop:int-formula} the $ w_1 $ and $ v_2 $ obtained in Theorem \ref{th:CGOw_1} and Theorem \ref{th:CGOv_2} and to let $\lambda$ go to $\infty$. The output turns out to be certain non-linear relations of $ \gamma_1 $, $ \mu_1 $, $ \gamma_2 $, $ \mu_2 $ and their weak partial derivatives up to the second order. Then a unique continuation principle argument can be used to conclude the uniqueness.

Throughout this section we let $ Q_j $ denote $ Q(\gamma_j, \mu_j, \omega) $ with $ j \in \{ 1, 2 \} $. If
\begin{align*}
A_1 &= - (\eta_1 + i \eta_2) \vee \alpha - (\eta_1 + i \eta_2) \wedge \beta,\\ 
B_2 &= - (\eta_1 + i \eta_2) \vee (\alpha + \beta) - (\eta_1 + i \eta_2) \wedge (- \alpha + \beta)
\end{align*}
with $ \alpha $ and $ \beta $ as in Theorem \ref{th:CGOw_1}, we see that, for any $ \rho $, $ |A_{\zeta_1} - A_1| + |B_{\zeta_2} - B_2| = \mathcal{O}(s^{-1}) $ for $ s $ large enough and all $ \eta_1, \eta_2 \in S^1 $. The implicit constant (incorporated in the symbol $\mathcal O$) here depends on $ \rho $. On the other hand, plugging $ w_1 $ and $ v_2 $, as in Theorem \ref{th:CGOw_1} and Theorem \ref{th:CGOv_2}, into Proposition \ref{prop:int-formula} we get
\begin{gather*}
\Duality{(Q_2 - Q_1) e_{i\rho} A_1}{B_2} = \Duality{(Q_1 - Q_2) (A_{\zeta_1} + R_{\zeta_1})}{e_{i\rho} (B_{\zeta_2} - B_2 + S_{\zeta_2})} \\
+ \Duality{(Q_1 - Q_2) B_2 }{e_{i\rho} (A_{\zeta_1} - A_1 + R_{\zeta_1})}.
\end{gather*}
We know that, for each $ \rho $, $ Q_j $ is bounded from $ \dot{X}^{1/2}_{\zeta_j} $ to $ \dot{X}^{-1/2}_{\zeta_j} $ and its norm is $ o(\textbf{1}(s)) $ for $ s $ large enough and all $ \eta_1 $ (see \eqref{es:Q1} and the same applies to $Q_2$). The same is true for $ Q_1 - Q_2 $ from $ \dot{X}^{1/2}_{\zeta_1} $ to $ \dot{X}^{-1/2}_{\zeta_2} $ as an immediate consequence of the proof of Lemma 2.3 in \cite{HT}. Thus, for each $ \rho $ we have
\begin{gather}\label{eq:Q2-Q1-A1-B2}
\left| \Duality{(Q_2 - Q_1) e_{i\rho} A_1}{B_2} \right| \lesssim 
\norm{(Q_1 - Q_2) B_2}{}{\dot{X}^{-1/2}_{\zeta_1}} \left[ \norm{\chi (A_{\zeta_1} - A_1)}{}{\dot{X}^{1/2}_{\zeta_1}} + \norm{R_{\zeta_1}}{}{\dot{X}^{1/2}_{\zeta_1}} \right] \\
+ \left[ \norm{(Q_1 - Q_2) A_{\zeta_1}}{}{\dot{X}^{-1/2}_{\zeta_2}} + \norm{R_{\zeta_1}}{}{\dot{X}^{1/2}_{\zeta_1}} \right] \left[ \norm{\chi (B_{\zeta_2} - B_2)}{}{\dot{X}^{1/2}_{\zeta_2}} + \norm{S_{\zeta_2}}{}{\dot{X}^{1/2}_{\zeta_2}} \right] \nonumber
\end{gather}
where $ \chi \in \C^\infty_0 (\R^3) $ such that $ \chi(x) = 1 $ for all $ x \in \supp d \gamma_2 \cup \supp d \mu_2 $. Here the implicit constant might depends on $ \rho $. 

If $ \alpha = \eta_1 $ and $ \beta = 0 $, then $ A_1 = -1 $, $ B_2 = -1 + i \eta_2 \wedge \eta_1 $ and the left hand side of \eqref{eq:Q2-Q1-A1-B2} gives
\begin{equation}\label{eq:Q2-Q1-A1-B2-1}
\begin{aligned}
&\Duality{(Q_2 - Q_1) e_{i\rho} A_1}{B_2} = \int_{\R^3} \inner{d (a_1 - a_2)}{d e_{i \rho}} \, \dd x \\
&+ \int_{\R^3} \inner{d (a_1 + a_2)}{d (a_2 - a_1)} e_{i \rho} \, \dd x + \int_{\R^3} \omega^2 (\gamma_1 \mu_1 - \gamma_2 \mu_2) e_{i \rho} \, \dd x.
\end{aligned}
\end{equation}
If $ \alpha = 0 $ and $ \beta = |\rho|^{-1} \eta_2 \wedge \rho $, then $ A_1 = - |\rho|^{-1} \eta_1 \wedge \eta_2 \wedge \rho $, $ B_2 = - |\rho|^{-1} (\eta_1 + i \eta_2) \vee (\eta_2 \wedge \rho)  - |\rho|^{-1} \eta_1 \wedge \eta_2 \wedge \rho $ and we have
\begin{equation}\label{eq:Q2-Q1-A1-B2-2}
\begin{aligned}
&\Duality{(Q_2 - Q_1) e_{i\rho} A_1}{B_2} = \int_{\R^3} \inner{d (b_1 - b_2)}{d e_{i \rho}} \, \dd x \\
&+ \int_{\R^3} \inner{d (b_1 + b_2)}{d (b_2 - b_1)} e_{i \rho} \, \dd x + \int_{\R^3} \omega^2 (\gamma_1 \mu_1 - \gamma_2 \mu_2) e_{i \rho} \, \dd x.
\end{aligned}\end{equation}
Meanwhile, by the choice of $A_1$ and $B_2$ above, we have
\begin{align*}
\left( \frac{1}{\lambda}\int_{S^1}\int_\lambda^{2\lambda}\norm{\chi (A_{\zeta_1} - A_1)}{2}{\dot{X}^{1/2}_{\zeta_1}}~\dd s~\dd\eta_1 \right)^{1/2} = \mathcal{O} (\textbf{1}(\lambda)), \\
\left( \frac{1}{\lambda}\int_{S^1}\int_\lambda^{2\lambda}\norm{\chi (B_{\zeta_2} - B_2)}{2}{\dot{X}^{1/2}_{\zeta_2}}~\dd s~\dd\eta_1 \right)^{1/2} = \mathcal{O} (\textbf{1}(\lambda)).
\end{align*}
Then after averaging \eqref{eq:Q2-Q1-A1-B2} on $(s, \eta_1)\in[\lambda, 2\lambda]\times S^1$ and using the Cauchy-Schwartz inequality, we get
\begin{gather*}
\left| \Duality{(Q_2 - Q_1) e_{i\rho} A_1}{B_2} \right| \\
\lesssim [ \mathcal{O} (\textbf{1}(\lambda)) + o (\textbf{1}(\lambda)) ] \left( \frac{1}{\lambda}\int_{S^1}\int_\lambda^{2\lambda} \norm{(Q_1 - Q_2) B_2}{2}{\dot{X}^{-1/2}_{\zeta_1}} ~\dd s~\dd\eta_1 \right)^{1/2} \\
+ [ \mathcal{O} (\textbf{1}(\lambda)) + o (\textbf{1}(\lambda)) ] \left[ \left( \frac{1}{\lambda}\int_{S^1}\int_\lambda^{2\lambda} \norm{(Q_1 - Q_2) A_{\zeta_1}}{2}{\dot{X}^{-1/2}_{\zeta_2}} ~\dd s~\dd\eta_1 \right)^{1/2} + o (\textbf{1}(\lambda)) \right]
\end{gather*}
where Theorem \ref{th:CGOw_1} and Theorem \ref{th:CGOv_2} are used. It is not hard to see this converges to zero as $\lambda$ goes to $\infty$ by the same argument we used in proving \eqref{es:deacyQA} and noticing that the left hand side is independent of $\lambda$. Thus, by \eqref{eq:Q2-Q1-A1-B2-1} and \eqref{eq:Q2-Q1-A1-B2-2} we arrive at
\begin{equation}
\begin{gathered}
\int_{\R^3} \inner{d (a_2 - a_1)}{d e_{i \rho}} \, \dd x - \int_{\R^3} \inner{d (a_1 + a_2)}{d (a_2 - a_1)} e_{i \rho} \, \dd x \\
+ \int_{\R^3} \omega^2 (\gamma_2 \mu_2 - \gamma_1 \mu_1) e_{i \rho} \, \dd x = 0
\end{gathered} \label{eq:a2-a1}
\end{equation}
and
\begin{equation}
\begin{gathered}
\int_{\R^3} \inner{d (b_2 - b_1)}{d e_{i \rho}} \, \dd x - \int_{\R^3} \inner{d (b_1 + b_2)}{d (b_2 - b_1)} e_{i \rho} \, \dd x \\
+ \int_{\R^3} \omega^2 (\gamma_2 \mu_2 - \gamma_1 \mu_1) e_{i \rho} \, \dd x = 0
\end{gathered} \label{eq:b2-b1}
\end{equation}
for any $ \rho $. So far, this shows that
\[\left\{\begin{array}{l}
\delta d (a_2 - a_1) - \inner{d (a_1 + a_2)}{d (a_2 - a_1)} + \omega^2(\gamma_2 \mu_2 - \gamma_1 \mu_1) = 0, \\
\delta d (b_2 - b_1) - \inner{d (b_1 + b_2)}{d (b_2 - b_1)} + \omega^2(\gamma_2 \mu_2 - \gamma_1 \mu_1) = 0,
\end{array}\right.
\]
a system that has to be understood in the weak sense. 
%Note that it can be written as
%\begin{align*}
%\delta (\gamma_1^{1/2} \gamma_2^{1/2} d(a_2 - a_1)) + \omega^2 (\gamma_2^{3/2} \gamma_1^{1/2} \mu_2 - \gamma_1^{3/2} \gamma_2^{1/2} \mu_1) = 0, \\
%\delta (\mu_1^{1/2} \mu_2^{1/2} d(b_2 - b_1)) + \omega^2 (\gamma_2 \mu_2^{3/2} \gamma_1^{1/2} - \gamma_1 \mu_1^{3/2} \gamma_2^{1/2}) = 0;
%\end{align*}
%which turns out to be
%\begin{align*}
%\delta (\gamma_1^{1/2} d \gamma_2^{1/2} - \gamma_2^{1/2} d \gamma_1^{1/2}) + \omega^2 (\gamma_2^{3/2} \gamma_1^{1/2} \mu_2 - \gamma_1^{3/2} \gamma_2^{1/2} \mu_1) = 0, \\
%\delta (\mu_1^{1/2} d \mu_2^{1/2} - \mu_2^{1/2} d \mu_1^{1/2}) + \omega^2 (\gamma_2 \mu_2^{3/2} \gamma_1^{1/2} - \gamma_1 \mu_1^{3/2} \gamma_2^{1/2}) = 0.
%\end{align*}
Finally, some simple computations yield a system of second order equations of the form
\[\left\{\begin{array}{l}
- \Delta (\gamma_2^{1/2} - \gamma_1^{1/2}) + V (\gamma_2^{1/2} - \gamma_1^{1/2}) + a (\gamma_2^{1/2} - \gamma_1^{1/2}) + b (\mu_2^{1/2} - \mu_1^{1/2}) = 0\\
- \Delta (\mu_2^{1/2} - \mu_1^{1/2}) + W (\mu_2^{1/2} - \mu_1^{1/2}) + c (\mu_2^{1/2} - \mu_1^{1/2}) + d (\gamma_2^{1/2} - \gamma_1^{1/2}) = 0,
\end{array}\right.
\]
again in the weak sense with 
\begin{align*}
V = - \frac{\delta d (\gamma_1^{1/2} + \gamma_2^{1/2})}{\gamma_1^{1/2} + \gamma_2^{1/2}},\qquad W = - \frac{\delta d (\mu_1^{1/2} + \mu_2^{1/2})}{\mu_1^{1/2} + \mu_2^{1/2}}
\end{align*}
and
\begin{align*}
a = \mathbf{1}_\Omega \omega^2 \gamma_1^{1/2} \gamma_2^{1/2} (\mu_1 + \mu_2), & & b = - \mathbf{1}_\Omega \omega^2 \gamma_1^{1/2} \gamma_2^{1/2} (\gamma_1 + \gamma_2) \frac{\mu_1^{1/2} + \mu_2^{1/2}}{\gamma_1^{1/2} + \gamma_2^{1/2}}, \\
c = \mathbf{1}_\Omega \omega^2 \mu_1^{1/2} \mu_2^{1/2} (\gamma_1 + \gamma_2), & & d = - \mathbf{1}_\Omega \omega^2 \mu_1^{1/2} \mu_2^{1/2} (\mu_1 + \mu_2) \frac{\gamma_1^{1/2} + \gamma_2^{1/2}}{\mu_1^{1/2} + \mu_2^{1/2}},
\end{align*}
where $ \mathbf{1}_\Omega $ is the characteristic function of $ \Omega $. Note that $ \gamma_2^{1/2} - \gamma_1^{1/2} $ and $ \mu_2^{1/2} - \mu_1^{1/2} $ belong to $ H^1 (\R^3) $ and they are compactly supported. Thus, the next unique continuation result implies that $ \gamma_2 = \gamma_1 $ and $ \mu_2 = \mu_1 $.
\begin{lemma} \sl Let $ f $ and $ g $ belong to $ H^1 (\R^3) $ and assume that they are compactly supported. Then, $ f $ and $ g $ vanish if and only if they satisfy
\begin{equation}\left\{
\begin{array}{l}
- \Delta f + V f + a f + b g = 0\\
- \Delta g + W g + c g + d f = 0.
\end{array}\right.
\label{linear_sys}
\end{equation}
\end{lemma}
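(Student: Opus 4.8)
The plan is to run the conjugation-and-smallness argument in the Bourgain-type spaces $ \dot{X}^{1/2}_\zeta $ of Section \ref{sec:CGO}, but now applied to the unknown pair $(f,g)$ itself rather than to a CGO ansatz. The backward implication is trivial, so assume $(f,g)$ is a compactly supported $ H^1 $-solution of \eqref{linear_sys}; the goal is to force $ f = g = 0 $. First I would fix a complex-valued constant $ 1 $-form $ \zeta $ with $ \inner{\zeta}{\zeta} = 0 $ and $ |\zeta| $ large (for instance $ \zeta = s(\eta_1 + i \eta_2) $ with $ \eta_1, \eta_2 $ real, orthonormal), and set $ \tilde f_\zeta = e_{-\zeta} f $ and $ \tilde g_\zeta = e_{-\zeta} g $. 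Since $ e_{-\zeta} $ is smooth and nonvanishing, both $ \tilde f_\zeta $ and $ \tilde g_\zeta $ are again compactly supported $ H^1 $-functions; and since $ |p_\zeta(\xi)| \leq |\xi|^2 + 2|\zeta||\xi| $, a compactly supported $ H^1 $-function lies in $ \dot{X}^{1/2}_\zeta $ for each fixed $ \zeta $, so $ \tilde f_\zeta, \tilde g_\zeta \in \dot{X}^{1/2}_\zeta $.

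Next I would conjugate \eqref{linear_sys} by $ e_\zeta $. On $ 0 $-forms the operator $ \Delta_\zeta + \inner{\zeta}{\zeta} $ coincides with the conjugate $ e_{-\zeta}(-\Delta) e_\zeta $ shifted by $ \inner{\zeta}{\zeta} $ and has symbol $ p_\zeta $; because $ \inner{\zeta}{\zeta} = 0 $, the pair $(\tilde f_\zeta, \tilde g_\zeta)$ satisfies, weakly in $ \dot{X}^{-1/2}_\zeta $,
\begin{align*}
(\Delta_\zeta + \inner{\zeta}{\zeta}) \tilde f_\zeta &= -(V + a) \tilde f_\zeta - b\, \tilde g_\zeta, \\
(\Delta_\zeta + \inner{\zeta}{\zeta}) \tilde g_\zeta &= -(W + c) \tilde g_\zeta - d\, \tilde f_\zeta,
\end{align*}
where the products by the weak potentials $ V, W $ are interpreted through their bilinear forms, exactly as $ Q(\gamma, \mu, \omega) $ was interpreted in Section \ref{sec:auxiliary}. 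Since the $ \dot{X}^{1/2}_\zeta $-solution of $ (\Delta_\zeta + \inner{\zeta}{\zeta}) u = F $ is unique for $ F \in \dot{X}^{-1/2}_\zeta $ (Remark \ref{rem:uniqueness}) and the resolvent has norm one (identity \eqref{id:Dz-k2=1}), I may invert to get
\[ \norm{\tilde f_\zeta}{}{\dot{X}^{1/2}_\zeta} \leq \norm{(V + a) \tilde f_\zeta}{}{\dot{X}^{-1/2}_\zeta} + \norm{b\, \tilde g_\zeta}{}{\dot{X}^{-1/2}_\zeta}, \]
together with the symmetric inequality for $ \tilde g_\zeta $.

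The crucial input is that every multiplication operator appearing on the right is small from $ \dot{X}^{1/2}_\zeta $ to $ \dot{X}^{-1/2}_\zeta $ as $ |\zeta| \to \infty $. For the bounded, compactly supported coefficients $ a, b, c, d $ this is the contribution controlled by the $ |\zeta|^{-1} $-term in the proof of \eqref{es:Q1}; for the weak potentials $ V, W $, which have precisely the $ \delta d(\centerdot)/(\centerdot) $ structure occurring inside $ Q(\gamma, \mu, \omega) $, I would reuse the mollification step of that proof, splitting the distributional part into $ \varphi_h \ast (\centerdot) $ and a remainder and optimizing at $ h = |\zeta|^{-1/2} $, to obtain a bound $ o(\textbf{1}(|\zeta|)) $. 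Writing $ \epsilon(\zeta) = o(\textbf{1}(|\zeta|)) $ for the largest of these operator norms and adding the two inequalities gives
\[ \norm{\tilde f_\zeta}{}{\dot{X}^{1/2}_\zeta} + \norm{\tilde g_\zeta}{}{\dot{X}^{1/2}_\zeta} \leq 2\, \epsilon(\zeta) \left( \norm{\tilde f_\zeta}{}{\dot{X}^{1/2}_\zeta} + \norm{\tilde g_\zeta}{}{\dot{X}^{1/2}_\zeta} \right). \]
For $ |\zeta| $ large enough $ 2\epsilon(\zeta) < 1 $, whence $ \tilde f_\zeta = \tilde g_\zeta = 0 $ and therefore $ f = e_\zeta \tilde f_\zeta = 0 $ and $ g = 0 $.

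The main obstacle is not the Neumann/fixed-point bookkeeping but the simultaneous justification of the two smallness estimates for coefficients of two quite different natures: the genuinely distributional $ V, W $ and the merely bounded $ a, b, c, d $. The former demand the Haberman--Tataru mollification argument already invoked for \eqref{es:Q1}, while for the latter one must verify that a bounded compactly supported multiplier indeed has vanishing $ \dot{X}^{1/2}_\zeta \to \dot{X}^{-1/2}_\zeta $ norm. The coupling through the off-diagonal terms $ b\, \tilde g_\zeta $ and $ d\, \tilde f_\zeta $ is precisely why the two inequalities must be added rather than closed separately. A secondary point requiring care is confirming that $ \tilde f_\zeta, \tilde g_\zeta \in \dot{X}^{1/2}_\zeta $ and that the conjugated system holds in the $ \dot{X}^{-1/2}_\zeta $ duality, so that the resolvent bound \eqref{id:Dz-k2=1} applies verbatim.
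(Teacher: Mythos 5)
Your proposal is correct and follows essentially the same route as the paper: conjugate by the exponential to land in $\dot{X}^{1/2}_\zeta$, interpret $V$ and $W$ through their bilinear forms, establish the $o(\textbf{1}(|\zeta|))$ operator bound by the same mollification argument used for \eqref{es:Q1}, and conclude by the resolvent identity \eqref{id:Dz-k2=1} that the only $\dot{X}^{1/2}_\zeta$-solution is zero. The only cosmetic difference is that the paper packages the pair as a graded form $w = u + \ast v$ with a single operator $Q$ and invokes uniqueness via the Banach fixed-point theorem, whereas you keep the two scalar equations and close the estimate by adding the two inequalities; these are the same argument.
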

\begin{proof} Let $ \zeta \in \C^n $ satisfies $ \zeta \cdot \zeta = 0 $. Set $ u(x) = e^{\zeta \cdot x} f(x) $ and $ v(x) = e^{\zeta \cdot x} g(x) $. Since $ f $ and $ g $ belong to $ H^1(\R^3) $ and they are compactly supported, then $ u $ and $ v $ also belong to $ H^1 (\R^3) $ and, consequently, to $ \dot{X}^{1/2}_\zeta $. Moreover, $ u $ and $ v $ solve
\begin{equation}\label{eq:final-u-v}
\left\{
\begin{array}{l}
-( \Delta + 2 \zeta \cdot \nabla) u + V u + a u + b v = 0,\\
-( \Delta + 2 \zeta \cdot \nabla) v + W v + c v + d u = 0.
\end{array}\right.
\end{equation}
Let $ w = w^0 + w^3 $ be the graded form given by $ w^0 = u $ and $ w^3 = \ast v $ and define
\begin{gather*}
\Duality{Q w}{\varphi} = - \int_{\R^3} \inner{d (\gamma_1^{1/2} + \gamma_2^{1/2})}{d \frac{\inner{w^0}{\varphi^0}}{\gamma_1^{1/2} + \gamma_2^{1/2}}} \, \dd x + \int_{\R^3} \inner{a w^0 + b w^3}{\varphi^0} \, \dd x \\
- \int_{\R^3} \inner{d (\mu_1^{1/2} + \mu_2^{1/2})}{d \frac{\inner{w^3}{\varphi^3}}{\mu_1^{1/2} + \mu_2^{1/2}}} \, \dd x + \int_{\R^3} \inner{d w^0 + c w^3}{\varphi^3} \, \dd x
\end{gather*}
for any $ \varphi = \varphi^0 + \varphi^3 $ with $ \varphi^l \in H^1(\R^3 ; \Lambda^l \R^3) $. Then $ w \in \dot{X}^{1/2}_\zeta $ and \eqref{eq:final-u-v} reads
\begin{equation}
\Delta_\zeta w + Q w = 0. \label{eq:Azeta+Q}
\end{equation}
Here we have identified $ \zeta $ with a $ 1 $-form also denoted by $ \zeta $. Following the same argument as in Lemma \ref{lem:CGO-HL+Q1}, we can prove
\begin{equation}
\norm{Q}{}{\dot{X}^{1/2}_\zeta \rightarrow \dot{X}^{-1/2}_\zeta} = o(\textbf{1}(|\zeta|)) \label{es:Quniquecont}
\end{equation}
as $ |\zeta| $ becomes large. Then, Remark \ref{rem:uniqueness}, identity \eqref{id:Dz-k2=1}, \eqref{es:Quniquecont} and the Banach fixed-point theorem imply that \eqref{eq:Azeta+Q} has a unique solution belonging to $ \dot{X}^{1/2}_\zeta $. Therefore, $ w = 0 $ which in turn implies $ f = g = 0 $.
\end{proof}

\appendix
\section{The framework of differential forms}\label{apx:A}
Since the tools used in this paper are scattered in the literatures, to make the paper more self-contained, we summarized them in this Appendix. We start with collecting several basics required in the framework of differential forms (see \cite{Ta} and \cite{Fe} for some details of differential forms and Grassman graded alegra),
and the basic functional spaces and properties for the current discussion of PDEs. Then we show a useful identity used in the paper, and end our discussion with recalling basic facts about the Fourier transform of graded forms.

\subsection{Tools of multivariable calculus}
For $ x \in \R^n $ and $ n \in \N\backslash\{0\}$, let $ T_x \R^n $ denote the complex vector space of distributions $ X $ of order one in $ \R^n $ satisfying $ \supp X = \{ x \} $ and $ \duality{X}{c} = 0 $ for any constant function $c$ (See Theorem 2.3.4 in \cite{H} for the justification of this definition). Such $ X $ can be uniquely extended to a linear form on $C^1(\R^n)$, the space of continuously differentiable functions in $ \R^n $. Let $ \partial_{x^j}|_x $ denote the distribution given by
\[ \Duality{\partial_{x^j}|_x}{\phi} = \partial_{x^j} \phi (x) \]
for any $ \phi \in C^1(\R^n) $. The set $ \{ \partial_{x^1}|_x, \dots, \partial_{x^n}|_x \} $ is a base of $ T_x \R^n $. Let $ T_x^\ast \R^n $ denote the dual vector space of $ T_x \R^n $ with $ \{ dx^1|_x, \dots, dx^n|_x \} $ being the dual base. We define on $ T_x^\ast \R^n $ the inner product $ \inner{\centerdot}{\centerdot} $ given by the bilinear extension of $ \inner{dx^j|_x}{dx^k|_x} = \delta_{jk} $, with $ \delta_{\centerdot \centerdot} $ being the Kronecker delta. Note that it is not an Hermitian product.

\subsubsection{Differential forms}
Let $ \Lambda^l \R^n $ with $ l \in \{ 0, 1, \dots, n \} $ and $ n \geq 2 $ denote the smooth complex vector bundle over $ \R^n $ whose fiber at $ x \in \R^n $ consists in $ \Lambda^l T_x^\ast \R^n $ the $ l $-fold exterior product of $ T_x^\ast \R^n $. By convention, a $ 0 $-fold is just a complex number and a $ 1 $-fold is an element of $ T_x^\ast \R^n $. Let $ E $ be a non-empty subset of $ \R^n $, an $ l $-form on $ E $ is a section $ u $ of $ \Lambda^l \R^n $ over $ E $, so $ u(x) = u|_x \in \Lambda^l T_x^\ast \R^n $ for any $ x \in E $. Any $ l $-form on $ E $ with $ l \in \{ 1, \dots, n \} $ can be written as
\[ u = \sum_{\alpha \in S^l} u_\alpha\, dx^{\alpha_1} \wedge \dots \wedge dx^{\alpha_l} \]
with $ S^l = \{ (\alpha_1, \dots, \alpha_l) \in \{ 1, \dots, n \}^l : \alpha_1 < \dots < \alpha_l \} $ and $ u_\alpha : E \longrightarrow \C $. It is convenient to call $ u_\alpha $ with $\alpha \in S^l $ the component functions of $ u $.

The exterior product of an $ l $-form $ u $ and an $ m $-form $ v $, both on $ E $, is denoted by $ (u \wedge v) (x) = u|_x \wedge v|_x $ for any $ x \in E $. Recall that the exterior product is bilinear, associative and anti-commutative:
\begin{equation}
u \wedge v = (-1)^{lm} v \wedge u. \label{id:anti-commutation}
\end{equation}
Since a $ 0 $-form $ v $ on $ E $ is nothing but a map from $ E $ to $ \C $, it holds that $ u \wedge v = v \wedge u = vu $ for any $ l $-form $ u $ on $ E $.

The inner product of two $ l $-forms on $ E $ with $ l \in \{ 2, \dots, n \} $ can be defined at each point $ x \in E $ as the bilinear extension of
\[ \inner{(dx^{\alpha_1} \wedge \dots \wedge dx^{\alpha_l})|_x}{(dx^{\beta_1} \wedge \dots \wedge dx^{\beta_l})|_x} = \det \inner{dx^{\alpha_j}|_x}{dx^{\beta_k}|_x}, \]
where the right hand side stands for the determinant of the matrix \[ \left( \inner{dx^{\alpha_j}|_x}{dx^{\beta_k}|_x} \right)_{jk}. \]
The inner product of two $ 0 $-forms is just the usual product of functions.
%The set $ \{ (dx^{\alpha_1} \wedge \dots \wedge dx^{\alpha_l})|_x : (\alpha_1, \dots, \alpha_l) \in S^l \} $ is an orthonormal base of $ \Lambda^l T_x^\ast \R^n $.
The inner product on $ l $-forms can be immediately extended to graded forms $ u(x) = \sum_0^n u^l(x) $ and $ v(x) = \sum_0^n v^l(x) $ on $ E $, with $ u^l $ and $ v^l $ $ l $-forms on $ E $, as follows
\[ \inner{u}{v}(x) = \sum_{l = 0}^n \inner{u^l|_x}{v^l|_x}. \]
Associated to this inner product, we consider the norm satisfying $ |u|^2 = \inner{u}{\overline{u}} $.

Let now $ T_x^\ast \R^n $ be endowed with an orientation. The Hodge star operator of an $ l $-form on $ E $ with $ l \in \{ 1, \dots, n - 1 \} $ is defined at each point $ x \in E $ as the linear extension of
\[ \ast (dx^{\alpha_1} \wedge \dots \wedge dx^{\alpha_l})|_x = (dx^{\beta_1} \wedge \dots \wedge dx^{\beta_{n - l}})|_x, \]
where $ (\beta_1, \dots, \beta_{n - l}) \in \{ 1, \dots , n \}^{n - l} $ is chosen such that
\[ \{ dx^{\alpha_1}, \dots dx^{\alpha_l}, dx^{\beta_1}, \dots, dx^{\beta_{n - l}} \} \]
is a positive base of $ T_x^\ast \R^n $. The case of $ 0 $-forms and $ n $-forms follows from
\[ \ast 1|_x = (dx^1 \wedge \dots \wedge dx^n)|_x, \qquad \ast (dx^1 \wedge \dots \wedge dx^n)|_x = 1|_x, \]
where $ 1 $ denotes the constant function taking the value $ 1 $ at any point. Now, if $ u $ and $ v $ are $ l $-forms on $ E $, then
\begin{align}
& \ast \ast u(x) = (- 1)^{l (n - l)} u(x) \label{eq:astast}, \\
& \inner{u}{v}(x) = \ast (u|_x \wedge \ast v|_x) = \ast (v|_x \wedge \ast u|_x), \label{eq:inner_astwedgeast} \\
& \inner{u}{v} = \inner{\ast u}{ \ast v}. \label{eq:astinner}
\end{align}
Let $ u $ be an $ l $-form on $ E $ and let $ v $ an $ m $-form on $ E $. The vee product of $ v $ and $ u $ at each point $ x \in E $ is defined as
\begin{equation}
(v \vee u)(x) = (-1)^{(n + m - l)(l - m)} \ast (v|_x \wedge \ast u|_x). \label{def:veePRODUCT}
\end{equation}
Note that whenever $ m > l $, $ (v \vee u)(x) = 0 $ for all $ x \in E $. The vee product is bilinear but it is neither associative nor commutative. The product satisfies
\begin{equation}
\inner{w \wedge v}{u} = \inner{w}{v \vee u} \label{eq:veeWEDGE}
\end{equation}
for any $ k $-form $ w $ on $ E $.

\begin{proposition} \sl
If $ u $ and $ v $ are $ 1 $-forms and $ w $ is an $ l $-form with $ l \in \{ 0, \dots, n \} $, then
\begin{equation}
u \vee (v \wedge w) - v \wedge (u \vee w) = (-1)^l \inner{u}{v} w. \label{eq:1v1wl}
\end{equation}
\end{proposition}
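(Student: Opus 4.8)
The plan is to prove the trilinear identity \eqref{eq:1v1wl} by reducing it to basis monomials and then establishing the single nontrivial ingredient, namely the contraction behaviour of the vee product against one basis form. First I would observe that at each fixed point $x$ all three operations appearing in \eqref{eq:1v1wl} — the exterior product, the vee product (through its defining formula \eqref{def:veePRODUCT}), and the inner product (being the bilinear extension of its values on monomials) — are $\C$-bilinear in each of their arguments. Hence both sides of \eqref{eq:1v1wl} are trilinear in $(u,v,w)$, and it suffices to verify the identity when $u = dx^i$, $v = dx^j$ with $1 \le i,j \le n$, and $w = dx^K := dx^{k_1} \wedge \dots \wedge dx^{k_l}$ for an increasing multi-index $K = (k_1 < \dots < k_l)$.

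Next I would compute the one ingredient that the adjunction \eqref{eq:veeWEDGE} cannot supply on its own: indeed, pairing \eqref{eq:1v1wl} against a test $l$-form and applying \eqref{eq:veeWEDGE} merely reshuffles the vee products into one another, so at least one direct computation is unavoidable. Using \eqref{def:veePRODUCT} together with the definition of the Hodge star and the double-star rule \eqref{eq:astast}, I would show that
\[ dx^i \vee dx^K = \begin{cases} (-1)^{l-r}\, dx^{k_1}\wedge\dots\wedge\widehat{dx^{k_r}}\wedge\dots\wedge dx^{k_l}, & i = k_r,\\ 0, & i \notin K,\end{cases} \]
that is, $dx^i\vee\,\centerdot$ is the signed contraction deleting the factor $dx^i$; equivalently it agrees with interior multiplication by $\partial_{x^i}$ up to the degree-dependent sign $(-1)^{l-1}$. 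This is the computational heart of the argument.

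With the contraction formula in hand, I would finish by a short case analysis. When $i = j$ we have $\inner{u}{v}=1$: writing out $u\vee(v\wedge w)$ and $v\wedge(u\vee w)$ with the formula above and using $dx^i\wedge dx^i = 0$, exactly one of the two terms survives (according to whether $i\in K$), and after reordering $dx^i$ into standard position via \eqref{id:anti-commutation} it equals $(-1)^l w$, matching the right-hand side. When $i\ne j$ we have $\inner{u}{v}=0$, and one splits into subcases according to whether $i$ and $j$ lie in $K$; in each subcase the two terms on the left are identical signed monomials and cancel, matching the vanishing right-hand side.

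The main obstacle is purely the sign bookkeeping. The delicate points are (a) extracting the sign of $dx^i\vee dx^K$ from the compound exponent $(-1)^{(n+m-l)(l-m)}$ of \eqref{def:veePRODUCT} combined with the factor $(-1)^{l(n-l)}$ from \eqref{eq:astast}, and (b) checking that across the two left-hand terms the signs $(-1)^{l-r}$ and the reordering signs from \eqref{id:anti-commutation} conspire so that the contributions cancel when $\inner{u}{v}=0$ and reinforce to $(-1)^l$ when $\inner{u}{v}=1$. Conceptually this is precisely the statement that the signed interior product is an antiderivation of the exterior algebra, so beyond careful tracking of these signs no genuine difficulty arises.
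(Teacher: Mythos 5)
Your proposal is correct. Note that the paper states this proposition without any proof at all, so there is no argument to compare against; your reduction to basis monomials by trilinearity, the contraction formula $dx^i \vee dx^K = (-1)^{l-r}\, dx^{k_1}\wedge\dots\wedge\widehat{dx^{k_r}}\wedge\dots\wedge dx^{k_l}$ (i.e.\ $dx^i\vee\centerdot = (-1)^{l-1}\iota_{\partial_{x^i}}$ on $l$-forms), and the resulting antiderivation-style case analysis all check out, and they do yield $(-1)^l\inner{u}{v}w$ on the nose. One small simplification worth noting: rather than unwinding \eqref{def:veePRODUCT} and \eqref{eq:astast} to get the contraction formula, you can read it off directly from the adjunction \eqref{eq:veeWEDGE} already recorded in the paper, since $\inner{dx^J}{dx^i\vee dx^K} = \inner{dx^J\wedge dx^i}{dx^K}$ determines $dx^i\vee dx^K$ against the orthonormal monomial basis with only the single reordering sign $(-1)^{l-r}$ to track; this sidesteps the compound exponent $(-1)^{(n+m-l)(l-m)}$ entirely. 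Your remark that the adjunction alone cannot prove the identity (it only reshuffles vee products) is accurate, and your final sign bookkeeping is exactly the antiderivation property of the signed interior product, as you say.
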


\begin{corollary} \label{cor:innerinner} \sl
If $ u^1 $ and $ v^1 $ are $ 1 $-forms and $ u^l $ and $ v^l $ are $ l $-forms with $ l \in \{ 0, \dots, n \} $, then
\[ \inner{u^1 \vee u^l}{v^1 \vee v^l} + \inner{v^1 \wedge u^l}{u^1 \wedge v^l} = \inner{u^1}{v^1} \inner{u^l}{v^l}. \]
\end{corollary}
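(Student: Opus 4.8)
The plan is to express both inner products on the left-hand side through the pointwise contraction identity \eqref{eq:1v1wl}, using repeatedly the adjointness \eqref{eq:veeWEDGE} between $\wedge$ and $\vee$, the anti-commutation rule \eqref{id:anti-commutation}, and the fact that the inner product is a \emph{symmetric} bilinear form (symmetry follows since the defining determinant is invariant under transposition, so $\inner{a}{b} = \inner{b}{a}$ throughout).

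First I would rewrite the vee--vee term. Peeling off the factor $v^1$ by \eqref{eq:veeWEDGE} gives $\inner{u^1 \vee u^l}{v^1 \vee v^l} = \inner{(u^1 \vee u^l)\wedge v^1}{v^l}$, and then \eqref{id:anti-commutation} turns $(u^1 \vee u^l)\wedge v^1$ into $(-1)^{l-1}\, v^1 \wedge (u^1 \vee u^l)$, exposing the factor $v^1 \wedge (u^1 \vee u^l)$. This is exactly the combination controlled by \eqref{eq:1v1wl}: applying that identity with the two $1$-forms $u^1, v^1$ and the $l$-form $u^l$ lets me replace $v^1 \wedge (u^1 \vee u^l)$ by $u^1 \vee (v^1 \wedge u^l) - (-1)^l \inner{u^1}{v^1}\, u^l$.

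This splits the vee--vee term into two pieces. The piece proportional to $\inner{u^1}{v^1}\, u^l$ pairs against $v^l$ to give, after collecting the prefactor $(-1)^{l-1}(-1)^l = -1$, precisely $\inner{u^1}{v^1}\inner{u^l}{v^l}$, i.e.\ the right-hand side of the corollary. For the remaining piece $(-1)^{l-1}\inner{u^1 \vee (v^1 \wedge u^l)}{v^l}$ I would once more use symmetry together with \eqref{eq:veeWEDGE} to move $u^1$ back across as a wedge, obtaining $\inner{v^l \wedge u^1}{v^1 \wedge u^l}$, and then anti-commute by \eqref{id:anti-commutation} to recognize it as $(-1)^l\inner{v^1 \wedge u^l}{u^1 \wedge v^l}$. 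Multiplying by the prefactor $(-1)^{l-1}$ produces exactly $-\inner{v^1 \wedge u^l}{u^1 \wedge v^l}$, which, moved to the left-hand side, completes the claim.

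The only real obstacle is the sign bookkeeping: each use of \eqref{id:anti-commutation} and each invocation of \eqref{eq:1v1wl} carries a factor $(-1)^l$ or $(-1)^{l-1}$, and the result hinges on these cancelling to $\pm 1$ as above, so I would track the parities carefully. It is also worth checking the degenerate degrees: for $l=0$ the vee--vee term vanishes (since $\vee$ of a $1$-form with a $0$-form is zero) and for $l=n$ the wedge--wedge term vanishes, and in both cases the surviving terms reduce directly to $\inner{u^1}{v^1}\inner{u^l}{v^l}$, consistent with the general computation.
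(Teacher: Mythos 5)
Your proposal is correct: it rests on exactly the same ingredients as the paper's proof, namely the adjointness \eqref{eq:veeWEDGE}, the anti-commutation rule \eqref{id:anti-commutation}, and the contraction identity \eqref{eq:1v1wl}, and your sign bookkeeping checks out. The paper merely runs the same computation in the opposite order---rewriting both terms as pairings against $v^l$ first and then applying \eqref{eq:1v1wl} once to the combined expression---so this is essentially the same argument.
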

\begin{proof}
Since
\begin{gather*}
\inner{u^1 \vee u^l}{v^1 \vee v^l} + \inner{v^1 \wedge u^l}{u^1 \wedge v^l} = \\
= (-1)^l \inner{u^1 \vee (v^1 \wedge u^l) - v^1 \wedge (u^1 \vee u^l)}{v^l}
\end{gather*}
the identity follows from \eqref{eq:1v1wl}.
\end{proof}

Let $ G $ be a non-empty open subset of $ \R^n $ and $ k $ a positive integer, an $ l $-form $ u $ on $ G $ with $ l \in \{ 1, \dots, n \} $ is said to be $ k $-times continuously differentiable if its component functions are $ k $-times continuously differentiable in $ G $, we write $ u \in C^k (G;\Lambda^l \R^n) $. If $ u \in C^k (G;\Lambda^l \R^n) $ for any positive integer $ k $, we say that $ u $ is smooth and we write $ u \in C^\infty (G;\Lambda^l \R^n) $. Furthermore, $ u \in C^k (G;\Lambda^l \R^n) $ (resp., $ u \in C^\infty (G;\Lambda^l \R^n) $) is said to be compactly supported if its component functions are compactly supported in $ G $, in which case we write $ u \in C^\infty_0 (G;\Lambda^l \R^n) $ (resp., $ u \in C^\infty_0 (G;\Lambda^l \R^n) $). These definitions are naturally generalized to $0$-forms, where the conventional function space notations are also used.%These definitions are obvious for $ 0 $-forms $ u $ on $ G $ and we might write either $ u \in C^k (G;\Lambda^0 \R^n) $, $ u \in C^\infty (G;\Lambda^0 \R^n) $, $ u \in C^k_0 (G;\Lambda^0 \R^n) $ and $ u \in C^\infty_0 (G;\Lambda^0 \R^n) $ or $ u \in C^k (G) $, $ u \in C^\infty (G) $, $ u \in C^k_0 (G) $ and $ u \in C^\infty_0 (G) $.

The exterior derivative of $ u\in C^1(G;\Lambda^0\R^n) $ is a $ 1 $-form defined by
\[ du|_x (X) = \Duality{X}{\chi_x u} \]
for each $ x \in G $ and $ X \in T_x \R^n $. Here $ \chi_x \in C^\infty_0 (G) $ with $ \chi_x(x) = 1 $ on $G$, and $ \chi_x u $ is understood as the extension of $u$ by zero outside $ G $. %Since $ \supp X = \{ x \} $, the exterior derivative is well-defined for $ 0 $-forms on $ G $. 
The exterior derivative of $ u\in C^1(G;\Lambda^l\R^n) $ with $ l \in \{ 1, \dots, n \} $ is defined by
\[ du = \sum_{\alpha \in S^l} du_\alpha \wedge dx^{\alpha_1} \wedge \dots \wedge dx^{\alpha_l}. \]
Recall that $ d(du) = 0 $ for any $u\in C^2(G;\Lambda^l\R^3)$ and
\begin{equation}
d (u \wedge v) = du \wedge v + (-1)^l u \wedge dv, \label{for:dWEDGE}
\end{equation}
for any $u\in C^1(G;\Lambda^l\R^3)$ and $v\in C^1(G;\Lambda^m\R^3)$.
%{\color{red} \HOX{the parts in red are new and they explain $ \odot $ and $ D^\ast $ in $ m_{Q} $}
\subsubsection{Symmetric tensors}
Let $ \Sigma^l \R^n $ with $ l \in \N $ and $ n \geq 2 $ denote the smooth complex vector bundle over $ \R^n $ whose fiber at $ x \in \R^n $ consists in $ \Sigma^l T_x^\ast \R^n $ the $ l $-fold symmetric tensor product of $ T_x^\ast \R^n $. By convention, a $ 0 $-fold is just a complex number and a $ 1 $-fold is an element of $ T_x^\ast \R^n $. Let $ E $ be a non-empty subset of $ \R^n $, an $ l $-symmetric tensor on $ E $ is a section $ u $ of $ \Sigma^l \R^n $ over $ E $, so $ u(x) = u|_x \in \Sigma^l T_x^\ast \R^n $ for any $ x \in E $. Any $ l $-symmetric tensor on $ E $ with $ l \in \{ 1, \dots, n \} $ can be written as
\[ u = \sum_{\alpha \in T^l} u_\alpha\, dx^{\alpha_1} \odot \dots \odot dx^{\alpha_l} \]
with $ T^l = \{ (\alpha_1, \dots, \alpha_l) \in \{ 1, \dots, n \}^l : \alpha_1 \leq \dots \leq \alpha_l \} $ and $ u_\alpha : E \longrightarrow \C $. It is convenient to call $ u_\alpha $ with $\alpha \in T^l $ the component functions of $ u $ and to point out that $ \Sigma^l T_x^\ast \R^n = \Lambda^l T_x^\ast \R^n $ for $ l \in \{ 0, 1 \} $, which in turn implies $ \Sigma^l \R^n = \Lambda^l \R^n $ for $ l \in \{ 0, 1 \} $.

The symmetric tensor product of an $ l $-symmetric tensor $ u $ and an $ m $-symmetric tensor $ v $, both on $ E $, is denoted by $ (u \odot v) (x) = u|_x \odot v|_x $ for any $ x \in E $. Recall that the symmetric tensor product is bilinear, associative and commutative. Moreover, if $ u $ and $ v $ are $ 1 $-symmetric tensors, then
\[ u \odot v = \frac{1}{2} (u \otimes v + v \otimes u). \]

The inner product of two $ l $-symmetric tensors on $ E $ with $ l \in \N \setminus \{ 0, 1 \} $ can be defined at each point $ x \in E $ as the bilinear extension of
\[ \inner{(dx^{\alpha_1} \odot \dots \odot dx^{\alpha_l})|_x}{(dx^{\beta_1} \odot \dots \odot dx^{\beta_l})|_x} = \left| \det \inner{dx^{\alpha_j}|_x}{dx^{\beta_k}|_x} \right|. \]

Let $ G $ be a non-empty open subset of $ \R^n $, an $ l $-symmetric tensor $ u $ on $ G $ with $ l \in \N $ is said to be $ k $-times continuously differentiable if its component functions are $ k $-times continuously differentiable in $ G $, and we write $ u \in C^k (G;\Sigma^l \R^n) $. Furthermore, $ u \in C^k (G;\Sigma^l \R^n) $ with $ l \in \N $ is said to be compactly supported if its component functions are compactly supported in $ G $, and we write $ u \in C^k_0 (G;\Sigma^l \R^n) $. These definitions extend naturally to $ 0 $-symmetric tensors on $ G $.

The symmetric derivative of a smooth $ l $-symmetric tensor $ u $ on $ G $ with $ l \in \N \setminus \{ 0 \} $ is defined by
\[ Du = \sum_{\alpha \in T^l} du_\alpha \odot dx^{\alpha_1} \odot \dots \odot dx^{\alpha_l}. \]
%}
\subsection{Functional spaces}
%Let $ \dd x $ denote the Lebesgue measure in $ \R^n $, an $ l $-form $ u $ on a measurable subset $ E $ of $ \R^n $, with $ l \in \{ 1, \dots, n \} $, is said to be locally integrable if its component functions $ u_\alpha $ are measurable and $ \int_{E \cap B} |u_\alpha| \, \dd x < +\infty $ for any bounded measurable subset $ B $ of $ \R^n $ and any $ \alpha \in S^l $. Again, the definition is obvious for a $ 0 $-form. 
Let $ L^1_\mathrm{loc} (E; \Lambda^l \R^n) $ denote the space of locally integrable $ l $-forms (whose component functions are in $L^1_\mathrm{loc}(E)$) modulo those which vanish almost everywhere (a. e. for short) in $ E $. The space $ L^p (E; \Lambda^l \R^n) $, with $ p \in [1, +\infty) $, consists of all $ u \in L^1_\mathrm{loc} (E; \Lambda^l \R^n) $ such that
\[ \int_E \inner{u}{\overline{u}}^{p/2} \, \dd x < +\infty. \]
Endowed with norm
\[ \norm{u}{}{L^p (E; \Lambda^l \R^n)} = \left( \int_E \inner{u}{\overline{u}}^{p/2} \, \dd x \right)^{1/p}, \]
$ L^p (E; \Lambda^l \R^n) $ is a Banach space. Moreover, $ L^2 (E; \Lambda^l \R^n) $ is a Hilbert space.

Let $ u \in L^1_\mathrm{loc} (G; \Lambda^l \R^n) $ with $ l \in \{ 1, \dots, n \} $. We say that $ v \in L^1_\mathrm{loc} (G; \Lambda^{l - 1} \R^n) $ is the formal adjoint derivative of $ u $, denoted by $ v = \delta u $, if
\[ \int_{G} \inner{v}{w} \, \dd x = \int_{G} \inner{u}{d w} \, \dd x \]
for any $ w \in C^1_0 (G; \Lambda^{l - 1} \R^n) $. If $ u \in L^1_\mathrm{loc} (G; \Lambda^0 \R^n) $, we define $ \delta u = 0 $. For all $ u\in L^1_\mathrm{loc} (G; \Lambda^l \R^n) $ with $ l \in \{ 0, \dots, n \} $ such that $ \delta u \in L^1_\mathrm{loc} (G; \Lambda^{l - 1} \R^n) $ one has $ \delta (\delta u) = 0 $. Moreover, if $ u \in C^1 (G;\Lambda^l \R^n) $, then
\begin{equation}
\delta u = (-1)^{n(l + 1) + 1} \ast d \ast u. \label{eq:DELTAd}
\end{equation}

\begin{proposition} \sl
Consider $ u \in L^1_\mathrm{loc} (G;\Lambda^l \R^n) $ and $ v \in C^1 (G;\Lambda^m \R^n) $. If $ \delta u \in L^1_\mathrm{loc} (G;\Lambda^{l - 1} \R^n) $, then $ \delta (v \vee u) \in L^1_\mathrm{loc} (G;\Lambda^{l - m - 1} \R^n) $ and
\begin{equation}
\delta (v \vee u) = (- 1)^{l - m} dv \vee u + v \vee \delta u. \label{for:DELTAvee}
\end{equation}
\end{proposition}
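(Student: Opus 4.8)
The plan is to verify the identity weakly, directly from the definition of the formal adjoint derivative $\delta$, using the duality \eqref{eq:veeWEDGE} between $\wedge$ and $\vee$ together with the Leibniz rule \eqref{for:dWEDGE} for $d$. I would fix an arbitrary test form $\varphi \in C^1_0(G; \Lambda^{l-m-1}\R^n)$ and compute $\int_G \inner{v \vee u}{d\varphi}\,\dd x$, aiming to rewrite it as $\int_G \inner{(-1)^{l-m}\, dv \vee u + v \vee \delta u}{\varphi}\,\dd x$. Once this holds for every such $\varphi$, the definition of $\delta$ yields the stated formula, and the inclusion $\delta(v \vee u) \in L^1_\mathrm{loc}(G; \Lambda^{l-m-1}\R^n)$ follows because the right-hand side is manifestly in $L^1_\mathrm{loc}$ (the forms $v$ and $dv$ are continuous, hence locally bounded, while $u$ and $\delta u$ lie in $L^1_\mathrm{loc}$). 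The degenerate degree cases $m \geq l$ are trivial, since then both sides vanish identically by the convention following \eqref{def:veePRODUCT}, so I would assume $l - m - 1 \geq 0$ throughout.

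The core manipulation proceeds in three moves. First, as $d\varphi$ is an $(l-m)$-form, identity \eqref{eq:veeWEDGE} converts $\inner{v \vee u}{d\varphi}$ into $\inner{d\varphi \wedge v}{u}$. Second, applying \eqref{for:dWEDGE} to the $(l-m-1)$-form $\varphi$ gives $d\varphi \wedge v = d(\varphi \wedge v) + (-1)^{l-m}\,\varphi \wedge dv$, splitting the integral into one term with $d(\varphi \wedge v)$ and one with $\varphi \wedge dv$. For the first, $\varphi \wedge v$ is a compactly supported $(l-1)$-form of class $C^1$, hence an admissible test form for $\delta u$; the defining property of $\delta u$ replaces $\int_G \inner{d(\varphi \wedge v)}{u}\,\dd x$ by $\int_G \inner{\varphi \wedge v}{\delta u}\,\dd x$, and a further use of \eqref{eq:veeWEDGE} turns this into $\int_G \inner{\varphi}{v \vee \delta u}\,\dd x$. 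Third, for the remaining term I apply \eqref{eq:veeWEDGE} once more, now with the $(m+1)$-form $dv$, to obtain $\inner{\varphi \wedge dv}{u} = \inner{\varphi}{dv \vee u}$. Collecting the three contributions produces exactly $\int_G \inner{\varphi}{(-1)^{l-m}\, dv \vee u + v \vee \delta u}\,\dd x$.

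Since every step is an algebraic identity on forms combined with the single integration by parts encoded in the definition of $\delta u$, no genuine analytic difficulty arises; the work is entirely bookkeeping. The point to watch most carefully is the sign: the exponent $(-1)^{l-m}$ in the conclusion comes precisely from the $(-1)^{l-m-1}$ produced by \eqref{for:dWEDGE} on the $(l-m-1)$-form $\varphi$, after the $d(\varphi \wedge v)$ term is moved across. I would also recheck the degree counts at each use of \eqref{eq:veeWEDGE}: in the first move $v \vee u$ is the $(l-m)$-form paired with the $(l-m)$-form $d\varphi$, while in the last two moves $v \vee \delta u$ and $dv \vee u$ are $(l-m-1)$-forms paired with $\varphi$, so every pairing is between forms of equal degree. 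No approximation or density argument is required, because $\varphi$ has compact support and all integrands are integrable there.
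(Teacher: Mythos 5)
Your proof is correct, and it is the canonical argument for this statement (the paper states the proposition without printing a proof): test against $\varphi \in C^1_0(G;\Lambda^{l-m-1}\R^n)$, move the wedge across the pairing via \eqref{eq:veeWEDGE}, split with the Leibniz rule \eqref{for:dWEDGE}, and absorb the $d(\varphi\wedge v)$ term using the defining property of $\delta u$ — noting, as you do, that $\varphi\wedge v$ is an admissible $C^1_0$ test form and that the right-hand side lies in $L^1_\mathrm{loc}$ since $v$ and $dv$ are continuous. All degree counts and the sign $(-1)^{l-m}$ check out (e.g.\ for $l=1$, $m=0$ the identity reduces to $\delta(vu) = v\,\delta u - \inner{dv}{u}$, the divergence product rule), and the only microscopic imprecision is that in the boundary case $m=l$ the left side vanishes by the convention $\delta\equiv 0$ on $0$-forms rather than by the vee convention, which affects nothing.
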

%\begin{proof}
%If $ l < m + 1 $ both side of \eqref{for:DELTAvee} vanish a. e. in $ G $ and the equality holds. Consider $ l \geq m + 1 $ and any $ w \in C^1_0 (G;\Lambda^{l - m - 1} \R^n) $. Now \eqref{eq:veeWEDGE}, the definition of $ \delta $ and \eqref{for:dWEDGE} imply
%\[ \int_G \inner{w}{v \vee \delta u} \, \dd x = \int_G \inner{dw \wedge v + (-1)^{l - m -1} w \wedge dv}{u} \, \dd x. \]
%Applying again \eqref{eq:veeWEDGE}, we get
%\[ \int_G \inner{v \vee \delta u + (-1)^{l - m} dv \vee u}{w} \, \dd x = \int_G \inner{v \vee u}{dw} \, \dd x. \]
%The statement of the proposition is now a straightforward consequence of $ v \vee \delta u + (-1)^{l - m} dv \vee u \in L^1_\mathrm{loc} (G;\Lambda^{l - m - 1} \R^n) $ and the definition of the adjoint derivative.
%\end{proof}

Let $ u \in L^1_\mathrm{loc} (G; \Lambda^l \R^n) $ with $ l \in \{ 0, \dots, (n - 1) \} $. We say that $ v \in L^1_\mathrm{loc} (G; \Lambda^{l + 1} \R^n) $ is the (weak) exterior derivative of $ u $, denoted by $ v = d u $ if
\[ \int_{G} \inner{v}{w} \, \dd x = \int_{G} \inner{u}{\delta w} \, \dd x \]
for any $ w \in C^1_0 (G; \Lambda^{l + 1} \R^n) $. If $ u \in L^1_\mathrm{loc} (G; \Lambda^n \R^n) $, we define $ d u = 0 $. For all $ u \in L^1_\mathrm{loc} (G; \Lambda^l \R^n) $ with $ l \in \{ 0, \dots, n \} $ such that $ d u \in L^1_\mathrm{loc} (G; \Lambda^{l + 1} \R^n) $ one has $ d (d u) = 0 $.

\begin{proposition} \sl
Let $ u \in L^1_\mathrm{loc} (G; \Lambda^l \R^n) $ such that $ \delta u \in L^1_\mathrm{loc} (G; \Lambda^{l - 1} \R^n) $, then $ \ast d \ast u \in L^1_\mathrm{loc} (G; \Lambda^{l - 1} \R^n) $ and
\begin{equation}
\delta u = (-1)^{n(l + 1) + 1} \ast d \ast u. \label{eq:weakDELTAd}
\end{equation}
\end{proposition}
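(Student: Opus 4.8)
The plan is to deduce the weak identity directly from its $ C^1 $ counterpart \eqref{eq:DELTAd} by a duality argument, exploiting that the weak coderivative $ \delta $ and the weak exterior derivative $ d $ are defined as formal adjoints of one another. No regularization of $ u $ is needed: the smooth identity \eqref{eq:DELTAd} will be invoked only on the $ C^1_0 $ test forms appearing in those definitions, so the whole argument is a bookkeeping of the definitions rather than an approximation scheme.

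First I would verify that $ \ast u $, an $ (n - l) $-form, possesses a weak exterior derivative. By definition this means exhibiting $ \psi \in L^1_\mathrm{loc}(G; \Lambda^{n - l + 1} \R^n) $ with $ \int_G \inner{\psi}{\phi} \, \dd x = \int_G \inner{\ast u}{\delta \phi} \, \dd x $ for every $ \phi \in C^1_0(G; \Lambda^{n - l + 1} \R^n) $. Since $ \phi $ is smooth, I apply \eqref{eq:DELTAd} with $ l $ replaced by $ n - l + 1 $ to write $ \delta \phi = (-1)^{n(n - l) + 1} \ast d \ast \phi $, and then use the isometry \eqref{eq:astinner} to move the star off the test term and onto $ u $, giving $ \inner{\ast u}{\delta \phi} = (-1)^{n(n - l) + 1} \inner{u}{d \ast \phi} $. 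The crucial point is that $ \ast \phi \in C^1_0(G; \Lambda^{l - 1} \R^n) $ is precisely an admissible test form for the weak coderivative of $ u $; invoking the hypothesis that $ \delta u $ exists weakly yields $ \int_G \inner{u}{d \ast \phi} \, \dd x = \int_G \inner{\delta u}{\ast \phi} \, \dd x $. Converting the right side back with \eqref{eq:astinner} and \eqref{eq:astast} produces $ \int_G \inner{\ast u}{\delta \phi} \, \dd x = (-1)^l \int_G \inner{\ast \delta u}{\phi} \, \dd x $, so that $ \psi = d(\ast u) = (-1)^l \ast \delta u $ exists in the weak sense and lies in $ L^1_\mathrm{loc} $ because $ \ast $ preserves local integrability. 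This already establishes the first claim of the proposition.

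Finally, with $ d(\ast u) = (-1)^l \ast \delta u $ in hand, I would apply $ \ast $ to both sides and use \eqref{eq:astast} on the $ (l - 1) $-form $ \delta u $ to get $ \ast d \ast u = (-1)^{l + (l - 1)(n - l + 1)} \delta u $, after which a short parity check shows $ l + (l - 1)(n - l + 1) \equiv n(l + 1) + 1 \pmod 2 $, giving exactly \eqref{eq:weakDELTAd}. Conceptually the argument is immediate once the adjointness of the two weak derivatives is recognized; the only point demanding genuine attention is the sign bookkeeping, namely tracking the repeated uses of $ \ast \ast = (-1)^{l(n - l)} $ and of $ \delta = (-1)^{n(l + 1) + 1} \ast d \ast $ so that all the exponents collapse to the stated one. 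The main obstacle is thus combinatorial rather than analytic.
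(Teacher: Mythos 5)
Your proof is correct: the paper states this proposition without an explicit proof, and your duality argument --- testing the weak exterior derivative of $\ast u$ against $\phi \in C^1_0(G;\Lambda^{n-l+1}\R^n)$, rewriting $\delta\phi$ via the classical identity \eqref{eq:DELTAd}, moving the star across the inner product with \eqref{eq:astinner}, and feeding $\ast\phi \in C^1_0(G;\Lambda^{l-1}\R^n)$ into the defining identity of the weak coderivative of $u$ --- is exactly the intended route, since both weak derivatives are defined as formal adjoints and the Hodge star preserves $C^1_0$ and $L^1_\mathrm{loc}$. The sign bookkeeping also checks out: $n(n-l)+1+(n-l+1)(l-1) \equiv l \pmod 2$ gives $d(\ast u) = (-1)^l \ast \delta u$, and $l+(l-1)(n-l+1) \equiv n(l+1)+1 \pmod 2$ then yields \eqref{eq:weakDELTAd}.
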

%\begin{proof} Consider any $ v \in C^1_0(G; \Lambda^{l - 1} \R^n) $. By the definition of $ \delta $ and \eqref{eq:astinner}, we get
%\[ \int_G \inner{\delta u}{v} \, \dd x = \int_G \inner{\ast u}{\ast d v} \, \dd x. \]
%Property \eqref{eq:astast} applied to $ v $ and \eqref{eq:DELTAd} give
%\[ \int_G \inner{\delta u}{v} \, \dd x = (-1)^l \int_G \inner{\ast u}{\delta \ast v} \, \dd x. \]
%Since
%\[ \int_G \inner{\delta u}{v} \, \dd x = \int_G \inner{\ast \delta u}{\ast v} \, \dd x \]
%and $ \delta u \in L^1_\mathrm{loc} (G; \Lambda^{l - 1} \R^n) $, then $ (-1)^l d \ast u = \ast \delta u $ and
%\[ d \ast u \in L^1_\mathrm{loc} (G; \Lambda^{n - l + 1} \R^n) .\]
%Finally, \eqref{eq:astast} implies the statement.
%\end{proof}

We now present certain Sobolev spaces of forms, in which our PDEs are discussed. Let $ H^d (G; \Lambda^l \R^n) $ (resp., $H^\delta(G;\Lambda^l\R^n)$) denote the space of $ u \in L^2 (G; \Lambda^l \R^n) $ such that $ du \in L^2 (G; \Lambda^{l + 1} \R^n) $ (resp. $\delta u\in L^2(G; \Lambda^{l-1}\R^n)$), endowed with the norm
\[ \norm{u}{}{H^d (G; \Lambda^l \R^n)} = \left( \norm{u}{2}{L^2 (G; \Lambda^l \R^n)} + \norm{du}{2}{L^2 (G; \Lambda^{l + 1} \R^n)} \right)^{1/2} \]
\[\left(\mbox{resp., }\; \norm{u}{}{H^\delta (G; \Lambda^l \R^n)} = \left( \norm{u}{2}{L^2 (G; \Lambda^l \R^n)} + \norm{\delta u}{2}{L^2 (G; \Lambda^{l - 1} \R^n)} \right)^{1/2}\right). \]
It is observed that $  H^d (G; \Lambda^l \R^n)  $ (resp., $H^\delta(G;\Lambda^l\R^n)$) is a Hilbert space and $ C^1_0 (\R^n; \Lambda^l \R^n) $ is dense in it. Let $ H^d_\mathrm{loc} (\R^n; \Lambda^l \R^n) $ (resp., $ H^\delta_\mathrm{loc} (\R^n; \Lambda^l \R^n) $)  denote the space of $ u \in L^1_\mathrm{loc} (\R^n; \Lambda^l \R^n) $ such that $ u|_U \in H^d (U; \Lambda^l \R^n) $ (resp., $ u|_U \in H^\delta (U; \Lambda^l \R^n) $) for any bounded non-empty open subset $ U $ in $ \R^n $.

%Similarly, we define $ H^\delta (G; \Lambda^l \R^n) $ the space of $ u \in L^2 (G; \Lambda^l \R^n) $ such that $ \delta u \in L^2 (G; \Lambda^{l - 1} \R^n) $, endowed with the norm
%\[ \norm{u}{}{H^\delta (G; \Lambda^l \R^n)} = \left( \norm{u}{2}{L^2 (G; \Lambda^l \R^n)} + \norm{\delta u}{2}{L^2 (G; \Lambda^{l - 1} \R^n)} \right)^{1/2}. \]
%Thus, $  H^\delta (G; \Lambda^l \R^n)  $ is a Hilbert space and $ C^1_0 (\R^n; \Lambda^l \R^n) $ is dense in $ H^\delta (\R^n; \Lambda^l \R^n) $. Additionally, $ H^\delta_\mathrm{loc} (\R^n; \Lambda^l \R^n) $ denote the space of $ u \in L^1_\mathrm{loc} (\R^n; \Lambda^l \R^n) $ such that $ u|_U \in H^\delta (U; \Lambda^l \R^n) $ for any bounded non-empty open subset $ U $ in $ \R^n $.

Finally, by a density argument it holds that
\begin{equation}
\int_{\R^n} \inner{d u}{v} \, \dd x = \int_{\R^n} \inner{u}{\delta v} \, \dd x \label{eq:weakd-delta}
\end{equation}
for all $ u \in H^d (\R^n; \Lambda^{l - 1} \R^n) $ and $ v \in H^\delta (\R^n; \Lambda^l \R^n) $ with $ l \in \{ 1, \dots, n \} $.

\subsection{Traces}\footnote{For more datails on traces see \cite{Mi} and \cite{Sc}.} \label{app:traces}
Let $ U $ be a non-empty bounded open subset of $ \R^n $. Let $ H^1 (U; \Lambda^l \R^n) $ denote the space of all $ u \in L^2 (U; \Lambda^l \R^n) $ whose component functions $ u_\alpha $ satisfy $ d u_\alpha \in L^2 (U; \Lambda^1 \R^n) $ for all $ \alpha \in S^l $, endowed with the norm
\begin{equation}
\norm{u}{}{H^1 (U; \Lambda^l \R^n)} = \left( \norm{u}{2}{L^2(U; \Lambda^l \R^n)} + \sum_{\alpha \in S^l} \norm{d u_\alpha}{2}{L^2(U; \Lambda^1 \R^n)} \right)^{1/2}. \label{def:H1norm}
\end{equation}
Given $ G $ a non-empty open subset of $ \R^n $, by $ H^1_\mathrm{loc} (G; \Lambda^l \R^n) $ we denote the space of $ u \in L^1_\mathrm{loc} (G; \Lambda^l \R^n) $ such that $ u|_U \in H^1 (U; \Lambda^l \R^n) $ for any bounded non-empty open subset $ U $ of $ G $.

%We define here the homogeneous Sobolev spaces $\dot{H}^1(\R^n;\Lambda^l\R^n)$ \tingcomment{might be not necessary anymore.}of $l$-forms $u\in L^1_{\mathrm{loc}}(\R^n;\Lambda^l\R^n)$ such that $du_\alpha\in L^2(\R^n;\Lambda^1\R^n)$ for any $\alpha\in S^l$ with the norm
%\[\|u\|_{\dot H^1(\R^n;\Lambda^l\R^n)}=\left(\sum_{\alpha\in S^l}\|du_\alpha\|_{L^2(\R^n;\Lambda^1\R^n)}^2\right)^{1/2}.\]

It is a consequence of \eqref{eq:weakDELTAd} that, for any $ u \in H^1 (U; \Lambda^l \R^n) $, one has
\begin{equation}
\norm{u}{}{H^\delta (U; \Lambda^l \R^n)} \leq \norm{u}{}{H^1 (U; \Lambda^l \R^n)}. \label{es:HdeltaH1}
\end{equation}
Let $ H^1_0 (U; \Lambda^l \R^n) $ denote the closure in $ H^1 (U; \Lambda^l \R^n) $ of $ C^\infty_0 (U; \Lambda^l \R^n) $ modulo those vanishing a. e. in $ U $. We then define the space 
\[ TH^1 (\partial U; \Lambda^l \R^n) = H^1 (U; \Lambda^l \R^n) / H^1_0 (U; \Lambda^l \R^n).\] 
If $ f \in TH^1 (\partial U; \Lambda^l \R^n) $, let $ u_f \in H^1 (U; \Lambda^l \R^n) $ denote a representative of $ f $. This space can be endowed with the norm
\[ \norm{f}{}{TH^1 (\partial U; \Lambda^l \R^n)} = \inf \{ \norm{u}{}{H^1 (U; \Lambda^l \R^n)} : u - u_f \in H^1_0 (U; \Lambda^l \R^n) \}. \]
Let $ TH^1 (\partial U; \Lambda^l \R^n)^\ast $ denote the dual space of $ TH^1 (\partial U; \Lambda^l \R^n) $ with the functional $ \norm{\centerdot}{}{TH^1 (\partial U; \Lambda^l \R^n)^\ast} $ standing for the dual norm.

The latter spaces will be used as auxiliary spaces to define certain traces on $ H^d (U; \Lambda^l \R^n) $ and $ H^\delta (U; \Lambda^l \R^n) $. Firstly, define the $ d $-trace of $ v \in H^d (U; \Lambda^l \R^n) $ with $ l \in \{ 0, \dots, n - 1 \} $ as
\[ \Duality{d\tr\, v}{f} = \int_U \inner{dv}{u} \, \dd x - \int_U \inner{v}{\delta u} \, \dd x \]
for any $ f \in TH^1 (\partial U; \Lambda^{l + 1} \R^n) $ where $ u \in H^1 (U; \Lambda^{l+1} \R^n) $ such that $ u - u_f \in H^1_0 (U; \Lambda^{l+1} \R^n) $. Since \eqref{es:HdeltaH1} holds, we have
\[ \Duality{d\tr\, v}{f} \leq \norm{v}{}{H^d (U; \Lambda^l \R^n)} \norm{u}{}{H^1 (U; \Lambda^{l+1} \R^n)} \]
for all $ u \in H^1 (U; \Lambda^{l+1} \R^n) $ such that $ u - u_f \in H^1_0 (U; \Lambda^{l+1} \R^n) $. Hence $ d\tr\, v \in TH^1 (\partial U; \Lambda^{l + 1} \R^n)^\ast $ and
\[ \norm{d\tr\, v}{}{TH^1 (\partial U; \Lambda^{l + 1} \R^n)^\ast} \leq \norm{v}{}{H^d (U; \Lambda^l \R^n)}. \]
This motivates the definition of $ TH^d(\partial U; \Lambda^{l + 1} \R^n) $ to be the space of all $ g \in TH^1 (\partial U; \Lambda^{l + 1} \R^n)^\ast $ such that $ d\tr\, v = g $ for some $ v \in H^d (U; \Lambda^l \R^n) $. The endowed norm is then given by
\[ \norm{g}{}{TH^d(\partial U; \Lambda^{l + 1} \R^n)} = \inf \{ \norm{v}{}{H^d (U; \Lambda^l \R^n)} : d\tr\, v = g \}. \]
Finally, we define the $ \delta $-trace of $ v \in H^\delta (U; \Lambda^l \R^n) $ with $ l \in \{ 1, \dots, n \} $ as
\[ \Duality{\delta \tr\, v}{f} = (-1)^l \int_U \inner{\delta v}{u} \, \dd x - (-1)^l \int_U \inner{v}{d u} \, \dd x \]
for any $ f \in TH^1 (\partial U; \Lambda^{l - 1} \R^n) $ where $ u \in H^1 (U; \Lambda^{l - 1} \R^n) $ such that $ u - u_f \in H^1_0 (U; \Lambda^{l - 1} \R^n) $. 
%Now, we have \[ \Duality{\delta \tr\, v}{f} \leq \norm{v}{}{H^\delta (U; \Lambda^l \R^n)} \norm{u}{}{H^1 (U; \Lambda^{l - 1} \R^n)} \] for all $ u \in H^1 (U; \Lambda^{l - 1} \R^n) $ such that $ u - u_f \in H^1_0 (U; \Lambda^{l - 1} \R^n) $. 
Similarly we would have $ \delta \tr\, v \in TH^1 (\partial U; \Lambda^{l - 1} \R^n)^\ast $ and
\[ \norm{\delta \tr\, v}{}{TH^1 (\partial U; \Lambda^{l - 1} \R^n)^\ast} \leq \norm{v}{}{H^\delta (U; \Lambda^l \R^n)}. \]
Moreover, we define $ TH^\delta (\partial U; \Lambda^{l - 1} \R^n) $ the space consisting of all $ g $ belonging to $ TH^1 (\partial U; \Lambda^{l - 1} \R^n)^\ast $ such that there exists $ v \in H^\delta (U; \Lambda^l \R^n) $ with $ \delta \tr\, v = g $ with norm
\[ \norm{g}{}{TH^\delta (\partial U; \Lambda^{l - 1} \R^n)} = \inf \{ \norm{v}{}{H^\delta (U; \Lambda^l \R^n)} : \delta \tr\, v = g \}. \] Then we will need the following lemma about these spaces
\begin{lemma} \label{lem:traces} \sl Given the definitions above,
\begin{itemize}
\item[(a)] if $ u \in H^d (U; \Lambda^l \R^n) $ with $ l \in \{ 0, \dots, n - 2 \} $ and $ d \tr u = 0 $, then $ d \tr (d u) = 0 $;
\item[(b)] if $ u \in H^\delta (U; \Lambda^l \R^n) $ with $ l \in \{ 2, \dots, n \} $ and $ \delta \tr u = 0 $, then $ \delta \tr (\delta u) = 0 $.
\end{itemize}
\end{lemma}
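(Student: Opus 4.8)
The plan is to prove (a) in detail and to obtain (b) by the same argument with the roles of $d$ and $\delta$ (and of the exterior and adjoint derivatives) interchanged. For (a), since $u \in H^d(U;\Lambda^l\R^n)$ one has $d(du)=0$, so $du \in H^d(U;\Lambda^{l+1}\R^n)$; because $l+1 \le n-1$, the object $d\tr(du) \in TH^1(\partial U;\Lambda^{l+2}\R^n)^\ast$ is well defined. Fixing $f \in TH^1(\partial U;\Lambda^{l+2}\R^n)$ with representative $w \in H^1(U;\Lambda^{l+2}\R^n)$ and inserting $d(du)=0$ into the definition of the $d$-trace, I would first reduce the claim to
\[ \Duality{d\tr(du)}{f} = \int_U \inner{d(du)}{w}\,\dd x - \int_U\inner{du}{\delta w}\,\dd x = -\int_U\inner{du}{\delta w}\,\dd x, \]
so it suffices to show $\int_U\inner{du}{\delta w}\,\dd x = 0$ for every such $w$.

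I would verify this first for $w$ smooth up to the boundary, that is $w = \tilde w|_U$ with $\tilde w \in C^1_0(\R^n;\Lambda^{l+2}\R^n)$. Then $\delta w$ is continuous up to $\partial U$, hence $\delta w \in H^1(U;\Lambda^{l+1}\R^n)$ and $\delta(\delta w)=0$. Applying the definition of the $d$-trace of $u$ against the admissible test class $[\delta w] \in TH^1(\partial U;\Lambda^{l+1}\R^n)$ gives
\[ \Duality{d\tr u}{[\delta w]} = \int_U\inner{du}{\delta w}\,\dd x - \int_U\inner{u}{\delta(\delta w)}\,\dd x = \int_U\inner{du}{\delta w}\,\dd x, \]
so the hypothesis $d\tr u = 0$ forces $\int_U\inner{du}{\delta w}\,\dd x = 0$ for all such smooth $w$.

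To remove the smoothness I would use that $w \mapsto \int_U\inner{du}{\delta w}\,\dd x$ is continuous on $H^\delta(U;\Lambda^{l+2}\R^n)$, since by Cauchy--Schwarz $\left|\int_U\inner{du}{\delta w}\,\dd x\right| \le \norm{du}{}{L^2(U;\Lambda^{l+1}\R^n)}\,\norm{w}{}{H^\delta(U;\Lambda^{l+2}\R^n)}$. As $C^1_0(\R^n;\Lambda^{l+2}\R^n)$ is dense in $H^\delta(U;\Lambda^{l+2}\R^n)$ and $H^1 \subset H^\delta$, the vanishing on smooth forms propagates to every representative $w$, whence $\Duality{d\tr(du)}{f}=0$ for all $f$, i.e. $d\tr(du)=0$. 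Part (b) would be handled identically: from $\delta(\delta u)=0$ one gets $\delta u \in H^\delta(U;\Lambda^{l-1}\R^n)$ (defined since $l\ge 2$), one expands $\Duality{\delta\tr(\delta u)}{f}$ against a representative $w \in H^1(U;\Lambda^{l-2}\R^n)$, reduces it to $\int_U\inner{\delta u}{dw}\,\dd x$, identifies this up to a sign with $\Duality{\delta\tr u}{[dw]}=0$ for smooth $w$ using $d(dw)=0$, and concludes by density of $C^1_0(\R^n;\Lambda^{l-2}\R^n)$ in $H^d(U;\Lambda^{l-2}\R^n)$.

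The main obstacle, and the only genuinely subtle point, is the interchange step: one cannot feed $\delta w$ directly into the pairing defining $d\tr u$, because for a general representative $w \in H^1(U;\Lambda^{l+2}\R^n)$ the form $\delta w$ lies only in $L^2$ and not in $H^1$, so $[\delta w]$ need not be an admissible test class. The smooth-then-density scheme above is precisely what circumvents this: the identity is established where $\delta w$ is admissible, and transferred to arbitrary $w$ by the $H^\delta$- (respectively $H^d$-) continuity of the pairing together with the density statements recorded in the appendix. The remaining verifications --- that $\delta(\delta w)=0$, $d(dw)=0$, and the sign bookkeeping in (b) --- are routine.
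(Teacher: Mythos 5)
Your argument is correct and is in essence the paper's own proof: the authors package the key step as the well-definedness of a boundary pairing $\Duality{\nu \vee f}{g} = \int_U \inner{du}{v}\,\dd x - \int_U \inner{u}{\delta v}\,\dd x$ depending only on $f = d\tr u$ and $g = \delta\tr v$, and then write $\Duality{d\tr(du)}{g} = -\Duality{\nu\vee d\tr u}{\delta\tr(\delta v_g)} = 0$, which is exactly the double integration by parts you carry out, with your smooth-then-density step supplying the justification that the pairing only sees the trace classes. One small repair: for $\tilde w \in C^1_0(\R^n;\Lambda^{l+2}\R^n)$ the form $\delta w$ is merely continuous, which does not by itself place it in $H^1(U;\Lambda^{l+1}\R^n)$, so you should take $\tilde w$ in $C^\infty_0(\R^n;\Lambda^{l+2}\R^n)$ (or $C^2_0$), whose restrictions are still dense in $H^\delta(U;\Lambda^{l+2}\R^n)$, and the rest of the density argument goes through unchanged.
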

\begin{proof} In order to prove (a), let us consider the bounded linear operator $ (\nu \vee \centerdot) : TH^d(\partial U; \Lambda^l \R^n) \longrightarrow TH^\delta (\partial U; \Lambda^{l - 1} \R^n)^\ast $ given by
\[ \Duality{\nu \vee f}{g} = \int_U \inner{du}{v} \, \dd x - \int_U \inner{u}{\delta v} \, \dd x \]
where $ u \in H^d(U; \Lambda^{l - 1} \R^n) $, $ v \in H^\delta (U; \Lambda^l \R^n) $, $ d \tr u = f $ and $ \delta \tr v = g $. Here $ TH^\delta (\partial U; \Lambda^{l - 1} \R^n)^\ast $ denotes the dual of $ TH^\delta (\partial U; \Lambda^{l - 1} \R^n) $. Let $ u $ be as in (a) and $ g \in TH^1(U; \Lambda^{l + 2} \R^n) $, then
\[ \Duality{d \tr (d u)}{g} = - \Duality{\nu \vee d \tr u}{\delta \tr (\delta v_g)}, \]
where $ v_g \in H^1(U; \Lambda^{l + 2} \R^n) $ denotes a representative of $ g $. Therefore (a) holds.

Similar proof applies to (b) by considering the operator $ (\nu \vee \centerdot) : TH^\delta(\partial U; \Lambda^l \R^n) \longrightarrow TH^d (\partial U; \Lambda^{l + 1} \R^n)^\ast $ defined by
\[ \Duality{\nu \wedge f}{g} = (-1)^{l + 1} \int_U \inner{\delta u}{v} \, \dd x - (-1)^{l + 1} \int_U \inner{u}{d v} \, \dd x \]
where $ u \in H^\delta (U; \Lambda^{l + 1} \R^n) $, $ v \in H^d (U; \Lambda^l \R^n) $, $ \delta \tr u = f $ and $ d \tr v = g $. We will leave the proof to the readers.

%We next prove (b). To do so, we consider the bounded linear operator $ (\nu \vee \centerdot) : TH^\delta(\partial U; \Lambda^l \R^n) \longrightarrow TH^d (\partial U; \Lambda^{l + 1} \R^n)^\ast $ defined by
%\[ \Duality{\nu \wedge f}{g} = (-1)^{l + 1} \int_U \inner{\delta u}{v} \, \dd x - (-1)^{l + 1} \int_U \inner{u}{d v} \, \dd x \]
%where $ u \in H^\delta (U; \Lambda^{l + 1} \R^n) $, $ v \in H^d (U; \Lambda^l \R^n) $, $ \delta \tr u = f $ and $ d \tr v = g $. Once again, $ TH^d (\partial U; \Lambda^{l + 1} \R^n)^\ast $ denotes the dual space of $ TH^d (\partial U; \Lambda^{l + 1} \R^n) $. If $ u $ is now as in (b) and  $ g $ belongs to $ TH^1(U; \Lambda^{l - 2} \R^n) $ then
%\[ \Duality{\delta \tr (\delta u)}{g} = \Duality{\nu \wedge \delta \tr u}{d \tr (d v_g)}, \]
%where $ v_g \in H^1(U; \Lambda^{l - 2} \R^n) $ denotes a representative of $ g $. Therefore (b) holds.
\end{proof}

\subsection{A useful identity}
Given $ G$ a non-empty open subset of $ \R^n $, let $ L^1_\mathrm{loc} (G; \Sigma^l \R^n) $ denote the space of locally integrable $ l $-symmetric tensors (whose component functions are in $L^1_\mathrm{loc}(E)$) modulo those which vanish a. e. in $ E $.

For $u\in L^1_\mathrm{loc} (G; \Sigma^l \R^n) $ with $ l \in \N \setminus \{ 1, 2 \} $, we say that $ v \in L^1_\mathrm{loc} (G; \Sigma^{l - 1} \R^n) $ is the formal adjoint (symmetric) derivative of $ u $, denoted by $ v = D^\ast u $, if
\[ \int_{G} \inner{v}{w} \, \dd x = \int_{G} \inner{u}{D w} \, \dd x \]
for any $ w \in C^1_0 (G; \Sigma^{l - 1} \R^n) $.

Note that if $ u = \sum_{j = 1}^n u_j dx^j $ and $ v = \sum_{j = 1}^n v_j dx^j $ such that $ u \odot v \in L^1_\mathrm{loc} (G; \Sigma^2 \R^n) $ and $ D^\ast (u \odot v) \in L^1_\mathrm{loc} (G; \Sigma^1 \R^n) $, then
\begin{equation}
D^\ast (u \odot v) = - \sum_{k = 1}^n \left( \sum_{j = 1}^n \partial_{x^j} (u_j v_ k + u_k v_j) \right) dx^k. \label{eq:DsymCOORDINATES}
\end{equation}

\begin{proposition} \label{prop:vdwdelta} \sl
Given $ u $ and $ v $ in $ H^1_\mathrm{loc} (G; \Lambda^1 \R^n) $, we have $ d \inner{u}{v} $ and $ D^\ast (u \odot v) $ belong to $ L^1_\mathrm{loc} (G; \Lambda^1 \R^n) $ and the following identity holds
\begin{equation*}
u \vee d v + v \vee d u + \delta u \vee v + \delta v \vee u = d \inner{u}{v} + D^\ast (u \odot v).
\end{equation*}
\end{proposition}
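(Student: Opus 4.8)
The plan is to reduce everything to component functions and verify the identity by a direct coordinate computation, first for smooth forms and then extending by density. Write $u = \sum_j u_j\, dx^j$ and $v = \sum_j v_j\, dx^j$, so that each $u_j, v_j \in H^1_\mathrm{loc}(G)$. The regularity claims follow at once from the product rule for weak derivatives: since each product $u_j v_k$ lies in $W^{1,1}_\mathrm{loc}(G)$ with $\partial_{x^i}(u_j v_k) = (\partial_{x^i} u_j)v_k + u_j\,\partial_{x^i} v_k$, the $0$-form $\inner{u}{v} = \sum_j u_j v_j$ is in $W^{1,1}_\mathrm{loc}$, hence $d\inner{u}{v} \in L^1_\mathrm{loc}(G;\Lambda^1 \R^n)$, and by \eqref{eq:DsymCOORDINATES} the same holds for $D^\ast(u\odot v)$.

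For the identity itself I would first assume $u, v \in C^\infty(G;\Lambda^1 \R^n)$. The key preliminary is a coordinate formula for the vee products on the left. From the adjoint property \eqref{eq:veeWEDGE} together with the anticommutation rule \eqref{id:anti-commutation}, one checks that for a $1$-form $v$ and an $l$-form $\omega$ the operation $v \vee \omega$ equals $(-1)^{l-1}$ times the contraction of $\omega$ by the vector dual to $v$; applied with $\omega = du$ (an $l=2$ form) this gives that the $dx^k$-component of $v \vee du$ is $\sum_j v_j(\partial_{x^k} u_j - \partial_{x^j} u_k)$, and symmetrically for $u \vee dv$. For the remaining two terms, note that $\delta u$ and $\delta v$ are $0$-forms, so $\delta u \vee v = (\delta u)\,v$ and $\delta v \vee u = (\delta v)\,u$ are scalar multiples, with $\delta u = -\sum_j \partial_{x^j} u_j$ by \eqref{eq:DELTAd}. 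Expanding the right-hand side via \eqref{eq:DsymCOORDINATES} and the product rule, both sides turn out to have $dx^k$-component equal, after cancellation, to $\sum_j\bigl(\partial_{x^k} u_j\, v_j + u_j\,\partial_{x^k} v_j\bigr) - \sum_j\bigl(\partial_{x^j} u_j\, v_k + u_j\,\partial_{x^j} v_k + \partial_{x^j} u_k\, v_j + u_k\,\partial_{x^j} v_j\bigr)$; matching coefficients proves the smooth case.

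To pass from smooth forms to $u, v \in H^1_\mathrm{loc}$, I would mollify to obtain smooth $u^\varepsilon, v^\varepsilon$ converging to $u, v$ in $H^1_\mathrm{loc}$. Every term on the left is bilinear and built from pointwise-algebraic operations composed with $d$ or $\delta$, which map $H^1_\mathrm{loc}$ continuously into $L^2_\mathrm{loc}$; hence $v^\varepsilon \vee du^\varepsilon$, $u^\varepsilon \vee dv^\varepsilon$, $(\delta u^\varepsilon)v^\varepsilon$ and $(\delta v^\varepsilon)u^\varepsilon$ converge in $L^1_\mathrm{loc}$ (a product of two $L^2_\mathrm{loc}$-convergent factors converges in $L^1_\mathrm{loc}$) to the corresponding terms for $u,v$. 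On the right, $\inner{u^\varepsilon}{v^\varepsilon}\to\inner{u}{v}$ and $u^\varepsilon\odot v^\varepsilon \to u\odot v$ in $L^1_\mathrm{loc}$, so $d\inner{u^\varepsilon}{v^\varepsilon}\to d\inner{u}{v}$ and $D^\ast(u^\varepsilon\odot v^\varepsilon)\to D^\ast(u\odot v)$ in the sense of distributions. Passing to the limit in the smooth identity then yields the identity for $u,v\in H^1_\mathrm{loc}$.

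The main obstacle I anticipate is purely bookkeeping: pinning down the correct sign in the contraction formula for $v\vee du$ (the factor $(-1)^{l-1}$ coming from \eqref{id:anti-commutation}) and tracking the index contractions so that the divergence terms $\delta u\vee v$ and $\delta v\vee u$ cancel exactly against the matching pieces of $D^\ast(u\odot v)$. Once the smooth coordinate identity is in hand, the density argument is routine, the only analytic point being the weak product rule already used for the regularity statement.
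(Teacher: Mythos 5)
The paper states Proposition \ref{prop:vdwdelta} without proof, so there is nothing to compare against; judged on its own, your argument is correct and is the natural one. I checked the component formulas: $(v\vee du)_k=\sum_j v_j(\partial_{x^k}u_j-\partial_{x^j}u_k)$ does follow from \eqref{eq:veeWEDGE} with the sign $(-1)^{l-1}$ you identify, $\delta u\vee v=(\delta u)v$ with $\delta u=-\sum_j\partial_{x^j}u_j$, and both sides reduce to the expression you display, so the smooth identity holds; the $W^{1,1}_{\mathrm{loc}}$ product rule for $H^1_{\mathrm{loc}}$ factors gives the stated regularity and justifies the passage to the limit.
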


\subsection{Local regularity}
Here we prove a local regularity lemma for the operator $ (d + \delta) \sum_0^n (-1)^l $.
\begin{lemma} \label{lem:H^delta-H^d} \sl Let $ v = \sum_0^n v^l $ be such that $ v^l \in L^2_\mathrm{loc} (\R^n; \Lambda^l \R^n) $ and
\[ (d + \delta) \sum_{l = 0}^n (-1)^l v^l \in \bigoplus_{l = 0}^n L^2_\mathrm{loc} (\R^n; \Lambda^l \R^n). \]
Then $ v^l \in H^1_\mathrm{loc} (\R^n; \Lambda^l \R^n) $ for $ l \in \{ 0, \dots, n \} $.
\end{lemma}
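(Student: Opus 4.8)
The plan is to read the hypothesis as an instance of interior elliptic regularity for the first order operator $D = d + \delta$ acting on graded forms, and to prove it by localizing with a cutoff and then passing to the Fourier side, where the ellipticity of $D$ becomes the pointwise symbol identity $|\widehat{Dw}(\xi)| = |\xi|\,|\widehat{w}(\xi)|$. First I would dispose of the alternating signs: writing $w = \sum_{l=0}^n (-1)^l v^l$, the forms $w^l = (-1)^l v^l$ and $v^l$ have the same component functions up to sign, so $v^l \in H^1_\mathrm{loc}(\R^n;\Lambda^l\R^n)$ if and only if $w^l \in H^1_\mathrm{loc}(\R^n;\Lambda^l\R^n)$. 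Thus it suffices to show that if $w = \sum_0^n w^l$ with $w^l \in L^2_\mathrm{loc}$ and $(d+\delta)w \in \bigoplus_0^n L^2_\mathrm{loc}$, then each $w^l \in H^1_\mathrm{loc}$.

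Next I would localize. Fix a bounded non-empty open $U$, choose $\chi \in C^\infty_0(\R^n)$ with $\chi \equiv 1$ on a neighbourhood of $\overline U$, and set $u = \chi w$, a compactly supported graded form in $L^2(\R^n)$. The crucial discipline here is to treat $D = d+\delta$ as a single operator, since the hypothesis controls only the combination $(d+\delta)w$ and not $dw$ or $\delta w$ individually. Using the Leibniz rules \eqref{for:dWEDGE} and \eqref{for:DELTAvee} (which hold distributionally for $w \in L^2_\mathrm{loc}$ and $\chi$ smooth) one obtains $D(\chi w) = \chi\,Dw + [D,\chi]w$, where the commutator $[D,\chi]w$ is algebraic in $w$ (of order zero), consisting of exterior and interior multiplication of $w$ by $d\chi$; hence it lies in $L^2(\R^n)$ with compact support. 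Since $\chi\,Dw\in L^2(\R^n)$ as well, both $u$ and $Du$ belong to $L^2(\R^n)$ and are compactly supported.

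Then comes the Fourier estimate. As $D$ is a constant-coefficient first order operator, $\widehat{Du}(\xi) = P(\xi)\widehat u(\xi)$ with symbol $P(\xi) = i(\xi \wedge - \iota_\xi)$, where $\iota_\xi$ denotes the adjoint of exterior multiplication $\xi\wedge$ with respect to the Hermitian inner product (this is where $\delta$ being the formal adjoint of $d$ enters). Because $(\xi\wedge)^2 = \iota_\xi^2 = 0$ and the exterior-interior anticommutation relation $\xi\wedge\iota_\xi + \iota_\xi\,(\xi\wedge) = |\xi|^2 I$ holds pointwise (this is, up to the degree-dependent sign relating $\iota_\xi$ to the vee product, the content of \eqref{eq:1v1wl}), one computes that $P(\xi)$ is Hermitian and $P(\xi)^2 = |\xi|^2 I$. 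Consequently $|\widehat{Du}(\xi)|^2 = |\xi|^2 |\widehat u(\xi)|^2$ for almost every $\xi$, so that $\int_{\R^n}|\xi|^2 |\widehat u|^2\,\dd\xi = \|Du\|_{L^2}^2 < \infty$; together with $\int_{\R^n} |\widehat u|^2\,\dd\xi = \|u\|_{L^2}^2 < \infty$ this yields $\int_{\R^n}(1+|\xi|^2)|\widehat u|^2\,\dd\xi < \infty$, i.e. $u \in H^1(\R^n;\Lambda^l\R^n)$ componentwise in the sense of \eqref{def:H1norm}. Since $\chi \equiv 1$ near $\overline U$, we have $w|_U = u|_U \in H^1(U;\Lambda^l\R^n)$, and as $U$ was an arbitrary bounded open set, $w^l \in H^1_\mathrm{loc}$ for every $l$; hence $v^l = (-1)^l w^l \in H^1_\mathrm{loc}$.

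The essential point, and the only step requiring genuine care, is the symbol identity $P(\xi)^2 = |\xi|^2 I$, which is precisely the ellipticity of $d+\delta$. This uses the graded structure in an irreducible way: neither $d$ nor $\delta$ has invertible symbol on its own, and only their sum does. The same structural fact is what forces us to keep $D$ intact throughout the localization rather than estimating $d$ and $\delta$ separately, and it is the reason the alternating signs $(-1)^l$ in the hypothesis are harmless. The bookkeeping of the degree-dependent signs relating $\iota_\xi$ to the paper's vee product \eqref{def:veePRODUCT} is routine but must be carried out consistently; once $P(\xi)^2 = |\xi|^2 I$ is established, the remainder of the argument is entirely standard.
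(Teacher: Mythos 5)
Your proof is correct, and it hinges on the same ellipticity fact as the paper's: your identity $P(\xi)^2=|\xi|^2 I$ with $P(\xi)$ Hermitian, giving $|\widehat{Du}(\xi)|^2=|\xi|^2|\widehat{u}(\xi)|^2$ pointwise, is precisely the content of the paper's identity \eqref{id:apriori}, which is derived from Corollary \ref{cor:innerinner} (the anticommutation relation) together with the orthogonality of the ranges of $\xi\wedge$ and $\xi\vee$. Where you genuinely differ is in how this identity is turned into a gain of one derivative. The paper packages it as the exact $L^2$-versus-$H^{-1}$ identity \eqref{id:apriori}, applies it to the difference quotients $\Delta^j_h(\psi v)$ (which commute with the constant-coefficient operator and are bounded from $L^2$ to $H^{-1}$ uniformly in $h$), and lets $h\to 0$; you instead apply Plancherel directly to the localized form $u=\chi w$, for which $u$ and $Du$ are compactly supported and in $L^2$ because the commutator $[D,\chi]$ is of order zero --- the same Leibniz expansion the paper uses for $(d+\delta)\sum_l(-1)^l(\psi v^l)$. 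Your route is somewhat more direct, avoiding the limiting argument; the paper's real-space formulation avoids discussing a.e.\ identities of tempered distributions on the Fourier side. One small caveat: the Leibniz rules \eqref{for:dWEDGE} and \eqref{for:DELTAvee} are stated in the paper under the assumption that the individual derivatives lie in $L^1_{\mathrm{loc}}$, whereas here $dw^l$ and $\delta w^l$ are a priori only distributions, so the identity $D(\chi w)=\chi\,Dw+[D,\chi]w$ should be checked by pairing against test forms (it does hold) rather than by citing those propositions verbatim.
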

\begin{proof} By using Corollary \ref{cor:innerinner} and the following identity
\[ \inner{\xi \wedge \widehat{\phi^{l - 1}}(\xi)}{\overline{\xi \vee \widehat{\phi^{l + 1}}(\xi)}} = 0, \]
we can check that
\begin{equation}
  \label{id:apriori}
  \norm{\phi}{2}{L^2} = \norm{\phi}{2}{H^{-1}} + \norm{(d + \delta) \sum_{l = 0}^n (-1)^l \phi^l }{2}{H^{-1}}
\end{equation}
for all $ \phi = \sum_0^n \phi^l $ such that $ \phi^l \in L^2 (\R^n; \Lambda^l \R^n) $. 
Here we are using the notation
$ \norm{\varphi}{2}{Y} = \sum_0^n \norm{\varphi^l}{2}{Y(\R^n; \Lambda^l \R^n)} $
%\begin{equation*}
%  \norm{\varphi}{2}{Y} = \sum_{l=0}^n \norm{\varphi^l}{2}{Y(\R^n; \Lambda^l \R^n)}
%\end{equation*}
for $ \varphi = \sum_0^n \varphi^l $ with $ \varphi^l \in Y(\R^n; \Lambda^l \R^n) $, where $ Y $ denotes either $ L^2 $ or $ H^{-1} $. Remind that $\|\varphi^l\|^2_{H^{-1}(\R^n; \Lambda^l \R^n)} = \int_{\R^n} (1+|\xi|^2)^{-1} |\widehat{\varphi^l}(\xi)|^2\, \dd \xi$.

Let $ \psi $ be a compactly supported smooth function in $ \R^n $ and let $ \Delta^j_h \phi $ be defined as
\begin{equation*}
  \Delta^j_h \phi(x) = \frac{1}{h} ( \phi(x + he_j) - \phi(x) )
\end{equation*}
with $ \phi $ as in \eqref{id:apriori}, $ h $ a positive parameter and $ e_j $ the $ j $-th element of the orthonormal basis of $ \R^n $. By \eqref{id:apriori} and the commutativity between $ \Delta^j_h $ and $ (d + \delta) \sum_0^n (-1)^l $, we have
\begin{equation}
  \label{id:aposteriori}
  \norm{\Delta^j_h (\psi v)}{2}{L^2} = \norm{\Delta^j_h (\psi v)}{2}{H^{-1}} + \norm{\Delta^j_h (d + \delta) \sum_{l = 0}^n (-1)^l (\psi v^l) }{2}{H^{-1}}.
\end{equation}
Since
\begin{equation*}
  (d + \delta) \sum_{l = 0}^n (-1)^l (\psi v^l) = \psi (d + \delta) \sum_{l = 0}^n (-1)^l v^l + \sum_{l = 0}^n (-1)^l d \psi \wedge v^l + d \psi \vee v^l
\end{equation*}
and $ v $ and $  (d + \delta) \sum_0^n (-1)^l v^l $ belong to $ \bigoplus_0^n L^2_\mathrm{loc} (\R^n; \Lambda^l \R^n) $, the statement of the result follows by making the parameter $ h $ goes to zero in the identity \eqref{id:aposteriori}\footnote{See more details in Theorem (6.19) of \cite{Fo}.}.
\end{proof}

\subsection{Fourier transform of forms and operator $\Delta_\zeta$}\label{appdx:FT}
An $l$-form $u$ with $l\in\{0,\ldots,n\}$ is said to belong to the Schwartz space $\mathcal{S}(\R^n;\Lambda^l\R^n)$ if its component functions $u_\alpha$ ($\alpha\in S^l$) are in the Schwartz space $\mathcal{S}(\R^n)$. We can define the space $\mathcal{S}'(\R^n;\Lambda^l\R^n)$ of $l$-form-valued tempered distributions similarly. The Fourier Transform of $u\in\sh(\R^n;\Lambda^l\R^n)$ is then defined by
\[\widehat{u}=\sum_{\alpha\in S^l}\widehat{u_\alpha} d\xi^{\alpha_1}\wedge\ldots\wedge d\xi^{\alpha_l}\in \mathcal{S}(\R^n;\Lambda^l\R^n).\]
The Fourier Transform $\widehat{u}$ for $u\in\sh'(\R^n;\Lambda^l\R^n)$ can be defined by duality. One can easily verify the following identities for $u\in \mathcal{S}(\R^n;\Lambda^l\R^n)$
\begin{equation}\label{eq:sym-d-delta}
\widehat{du}(\xi)=i\xi\wedge \widehat{u}(\xi),\qquad \widehat{\delta u}(\xi)=i(-1)^{l} \xi\vee\widehat{u}(\xi)\end{equation}
where $\xi\in\R^3\backslash\{0\}$ can be viewed as a 1-form. For $u, v\in L^2(\R^n;\Lambda^l\R^n)$, we have
\begin{equation}\label{eq:FTdual}\int_{\R^n}\langle u,\overline{v}\rangle~\dd x=\int_{\R^n}\langle \widehat{u},\overline{\widehat{v}}\rangle~\dd x,\end{equation}
making Fourier Transform a unitary map on $L^2(\R^n;\Lambda^l\R^n)$. 

Given $ \zeta = \sum_1^n \zeta_j dx^j $ a constant $ 1 $-differential form in $ \R^n $. Consider the conjugated Hodge-Laplacian operator $ \Delta_\zeta = e_{-\zeta} (d\delta+\delta d) \circ e_\zeta $ where $ e_\zeta (x) = e^{\zeta \cdot x} $ and $ \zeta \cdot x = \sum_{1}^n \zeta_j x^j $. When acting on an $l$-form $u\in H^d(\R^n;\Lambda^l\R^n)\cap H^\delta(\R^n;\Lambda^l\R^n)$, it reads
\begin{equation}\label{eq:DELTA-zeta}
\begin{split}
\Delta_\zeta u=&(d\delta+\delta d)u+(-1)^ld(\zeta\vee u)+\zeta\wedge \delta u\\
&+\delta(\zeta\wedge u)+(-1)^{l+1}\zeta\vee du-\inner{\zeta}{\zeta} u,
\end{split}\end{equation}
(understood in the weak sense).
Furthermore, it is easy to verity that the symbol of $\Delta_\zeta$ is $|\xi|^2-2i\inner{\zeta}{\xi}-\inner{\zeta}{\zeta}$ by \eqref{eq:sym-d-delta}.

\end{document}